\tikzset{->-/.style={decoration={  markings,  mark=at position #1 with
    {\arrow{>}}},postaction={decorate}}}
\tikzset{-<-/.style={decoration={  markings,  mark=at position #1 with
    {\arrow{<}}},postaction={decorate}}}
\newcommand{\id}{\operatorname{id}}
\newcommand{\mfd}{\mathbf{d}}
\newcommand{\mfx}{\mathbf{x}}
\newcommand{\mcA}{\mathcal{A}}
\newcommand{\mcF}{\mathcal{F}}
\newcommand{\mcM}{\mathcal{M}}
\newcommand{\mcT}{\mathcal{T}}
\newcommand{\mbN}{\mathbb{N}}
\newcommand{\mbQ}{\mathbb{Q}}
\newcommand{\mbT}{\mathbb{T}}
\newcommand{\mbZ}{\mathbb{Z}}
\newcommand{\IMR}{\operatorname{IMR}}
\theoremstyle{plain}
\newtheorem{theorem}{Theorem}[section]
\newtheorem{lemma}[theorem]{Lemma}
\newtheorem{corollary}[theorem]{Corollary}
\newtheorem{proposition}[theorem]{Proposition}
\theoremstyle{definition}
\newtheorem{definition}[theorem]{Definition}
\newtheorem{example}[theorem]{Example}
\newtheorem{remark}[theorem]{Remark}
\newtheorem{question}[theorem]{Question}
\newtheorem{notations}[theorem]{Notations}
\numberwithin{equation}{section}
\newtheorem{definition-proposition}[theorem]{Definition-Proposition}
\begin{document}

\title {Mutation invariants of cluster algebras of rank 2}

\date{\today}

\author{Zhichao Chen}
\address{School of Mathematical Sciences\\ University of Science and Technology of China \\ Hefei, Anhui 230026, P. R. China}
\email{czc98@mail.ustc.edu.cn}
\author{Zixu Li}
\address{Department of Mathematical Sciences\\ Tsinghua University\\ Beijing 100080, P. R. China}
\email{lizx19@mails.tsinghua.edu.cn}
\maketitle

%=====================================
\begin{abstract}
 We consider the mutation invariants of cluster algebras of rank 2. We characterize the mutation invariants of finite type. Two examples are provided for the affine type and we prove the non-existence of Laurent mutation invariants of non-affine type. As an application, a class of Diophantine equations encoded with cluster algebras are studied.
	\\\\
	Keywords: Cluster algebras, Diophantine equations, Mutation invariants.\\
	2020 Mathematics Subject Classification: 13F60, 11D09, 11D25. 
\end{abstract}
%=======================================
\tableofcontents
%=======================================
\section*{Introduction}
\label{sec1}
Cluster algebras were first introduced  by Fomin and Zelevinsky in \cite{FZ1,FZ2} to investigate total positivity of Lie groups and canonical bases of quantum groups. They are subalgebras of rational function fields over a certain field. Nowadays, cluster algebras are closely related to different subjects in mathematics, such as higher Teichm{\"u}ller theory \cite{FG1}, representation theory \cite{BIRS,HL,KY}, Poisson geometry \cite{GSV}, integrable system \cite{KNS}, number theory \cite{P,BBH, PZ,L,LLRS,H,GM,BL} and so on.

The relations between cluster algebras and Diophantine equations are investigated. Propp \cite{P}, Beineke-Br{\"u}stle-Hille \cite{BBH}, Peng-Zhang \cite{PZ}, Lee-Li-Rabideau-Schiffler \cite{LLRS} and Huang \cite{H} studied the relations between the famous Markov equation
\begin{align}
	X^2+Y^2+Z^2=3XYZ\label{markov}
\end{align} and the once-punctured torus cluster algebras. Afterwards,
Lampe \cite{L} exhibited how cluster mutations generate all solutions
to a variant of Markov Diophantine equation 
\begin{align}
X^2+Y^4+Z^4+2XY^2+2XZ^2=7XY^2Z^2.\label{lampe-equation}
\end{align} Following his work, Bao-Li \cite{BL} gave a criterion to determine  the solutions in the orbit of the initial solution under the actions of a certain group. In addition, Gyoda-Matsushita \cite{GM} solved the generalized Markov equations
\begin{align}
	X^2+Y^2+Z^2+k_1XY+k_2YZ+k_3XZ=(3+k_1+k_2+k_3)XYZ\label{GM-equation}
\end{align} and studied the structure of generalized cluster algebras behind them. Motivated by them, we wish to find a class of Diophantine equations encoded with cluster algebras instead of a single one. There is an important notion called \emph{(Laurent) mutation invariant}, which appears vaguely in \cite{L} and we define it formally in a rational function field, see \Cref{mutation invariant}. 

In this paper, we study and classify mutation invariants of cluster algebras of rank 2. Let $\mcA$ be a cluster algbera of rank 2 with the initial exchange matrix 
$$\begin{pmatrix}
	0 & m \\ -n & 0
\end{pmatrix},$$ where $m,n \in \mbN$. We define M-actions (\Cref{M}) and find an important link with cluster mutations as follows. Note that the M-actions coincide with the \emph{initial seed mutations} without coefficients, which first appeared in \cite{NZ,RS} and were defined formally in \cite{FG2}.
\begin{theorem}[\Cref{cluster mutations are equivalent to M-actions rank 2 case}]
	For any $k \in \mbN$, the following two identities hold. 
	\begin{enumerate}
		\item $\mu_i(\mu_j\mu_i)^k(x_1,x_2)=\widetilde{\mcM_i}(\widetilde{\mcM_j}\widetilde{\mcM_i})^k(x_1,x_2)$, where $(i,j)=(1,2), (2,1)$.
		\item $(\mu_i\mu_j)^k(x_1,x_2)=(\widetilde{\mcM_j}\widetilde{\mcM_i})^k(x_1,x_2)$, where $(i,j)=(1,2), (2,1)$.
	\end{enumerate}
\end{theorem}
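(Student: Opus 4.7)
The plan is to prove both identities simultaneously by induction on $k$. For the base case $k=0$, statement (2) is trivial, while statement (1) reduces to $\mu_i(x_1,x_2)=\widetilde{\mcM_i}(x_1,x_2)$, which is immediate from the definitions: both operations perform the exchange $x_i\mapsto(\text{exchange polynomial})/x_i$ dictated by the initial matrix $B$ and fix the other variable.

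A key structural observation I would use throughout is that in rank~$2$ the cluster mutation $\mu_k$ acts on \emph{any} pair $(a,b)$ by the fixed rule
$$\mu_1(a,b)=\Big(\tfrac{1+b^{n}}{a},\,b\Big),\qquad \mu_2(a,b)=\Big(a,\,\tfrac{1+a^{m}}{b}\Big).$$
After a cluster mutation the exchange matrix flips to $-B$, but the exchange formula $x_k'x_k=\prod x_i^{[b_{ik}]_+}+\prod x_i^{[-b_{ik}]_+}$ is invariant under $B\leftrightarrow -B$ (the two monomials simply swap), so these rules remain valid regardless of how many prior mutations have been performed.

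For the inductive step of (2) with $(i,j)=(2,1)$ (the other case being symmetric), set $(y_1^{(k)},y_2^{(k)}):=(\mu_2\mu_1)^k(x_1,x_2)$. The rank-$2$ rules above give the recurrence
$$y_1^{(k+1)}=\frac{1+(y_2^{(k)})^n}{y_1^{(k)}},\qquad y_2^{(k+1)}=\frac{1+(y_1^{(k+1)})^m}{y_2^{(k)}}.$$
On the right, let $g:=\widetilde{\mcM_1}\widetilde{\mcM_2}$, a birational automorphism of $\mathbb{Q}(x_1,x_2)$ whose values $g(x_1)=(1+x_2^n)/x_1$ and $g(x_2)=(1+g(x_1)^m)/x_2$ are computed directly. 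Applying $g^k$ (which by the inductive hypothesis sends $x_\ell$ to $y_\ell^{(k)}$) to these formulas and using the substitution property of automorphisms reproduces exactly the same recurrence, so the two sides coincide by uniqueness.

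For statement (1), I rewrite $\widetilde{\mcM_i}(\widetilde{\mcM_j}\widetilde{\mcM_i})^k=(\widetilde{\mcM_i}\widetilde{\mcM_j})^k\widetilde{\mcM_i}$ and $\mu_i(\mu_j\mu_i)^k=(\mu_i\mu_j)^k\mu_i$ by associativity. By statement (2) applied with the roles of $i$ and $j$ exchanged, $(\mu_j\mu_i)^k(x_1,x_2)=(\widetilde{\mcM_i}\widetilde{\mcM_j})^k(x_1,x_2)=:(y_1^{(k)},y_2^{(k)})$, so the LHS of (1) becomes $\mu_i(y_1^{(k)},y_2^{(k)})$. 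Using the base case $\widetilde{\mcM_i}(x_1,x_2)=\mu_i(x_1,x_2)$, the RHS becomes $(\widetilde{\mcM_i}\widetilde{\mcM_j})^k(\mu_i(x_1,x_2))$; since $\mu_i(x_1,x_2)$ differs from $(x_1,x_2)$ only in the $i$-th coordinate via the exchange polynomial in $x_{3-i}$, the automorphism property of $(\widetilde{\mcM_i}\widetilde{\mcM_j})^k$ substituted into this formula produces precisely $\mu_i(y_1^{(k)},y_2^{(k)})$. The main subtlety is notational: cluster mutations on seeds compose in ``left-to-right'' order while the M-action automorphisms compose in ``right-to-left'' function order, which is exactly what forces the swap of indices $(i,j)$ between the two sides of each identity; once this convention is fixed the induction closes cleanly.
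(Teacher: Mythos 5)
Your proof is correct, and it arrives at the result by a noticeably leaner route than the paper's. The paper also inducts on $k$, but it first introduces two auxiliary sequences $p_k$ and $P_k$ (\Cref{pp}, \Cref{PP}) recording the $\mu_1$-first and $\mu_2$-first mutation orbits, identifies them with $(\mu_2\mu_1)^k(x_1,x_2)$, $\mu_1(\mu_2\mu_1)^k(x_1,x_2)$, etc.\ (\Cref{little p}, \Cref{big P}), proves the cross-relations $\mcM_2(m_1(p_{2k}))=m_1(P_{2k+1})$, $\mcM_1(m_2(P_{2k}))=m_2(p_{2k+1})$, \dots\ by a separate induction (\Cref{m p}), and only then assembles a simultaneous double induction on both identities. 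You instead prove (2) alone by a single induction --- the identity $g^{k+1}(x_\ell)=g^k(g(x_\ell))$ combined with the substitution property of $g^k$ (which is exactly the paper's \Cref{M-actions}) shows that $g^k(x_1,x_2)$ obeys the same two-term recurrence as $(\mu_2\mu_1)^k(x_1,x_2)$ --- and then deduce (1) formally by peeling off one $\mu_i$, resp.\ $\widetilde{\mcM_i}$, and invoking (2) with $i$ and $j$ swapped. The two ingredients you rely on, namely that the rank-2 exchange polynomial is unchanged under $B\mapsto -B$ (the paper's IMR condition, so the explicit rules for $\mu_1,\mu_2$ apply at every seed) and that the M-actions are substitution homomorphisms of $\mbQ(x_1,x_2)$ applied componentwise, are precisely the ones the paper uses; what your organization buys is the elimination of the $p_k/P_k$ bookkeeping and of \Cref{m p} altogether. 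If you write this up, the one step worth spelling out explicitly is the final substitution in your deduction of (1), i.e.\ that $(\widetilde{\mcM_i}\widetilde{\mcM_j})^k\bigl(m_i(x_1,x_2),x_{3-i}\bigr)=\bigl(m_i(w_1,w_2),w_{3-i}\bigr)$ where $(w_1,w_2)=(\widetilde{\mcM_i}\widetilde{\mcM_j})^k(x_1,x_2)$; this is again just the substitution property applied to the exchange polynomial, but as stated it is only gestured at.
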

Recall that $\mcA$ is of finite type if $mn\leq 3$, affine type if $mn=4$, and non-affine type if $mn\geq 5$, cf. \cite{FZ2, N}. Let $(x_1,x_2)$  be the initial cluster. Based on the Laurent phenomenon \cite[Theorem 3.1]{FZ1} of cluster variables and M-actions, a complete characterization of mutation invariants of finite type is given as follows.
\begin{theorem}[\Cref{M action provides mutation invariants rank 2}]\

Let $\mcA$ be of finite type with $m$ clusters
	$(c_{1;i}(x_1,x_2),c_{2;i}(x_1,x_2))^m_{i=1}.$
	Then a non-constant rational function $\mcT(x_1,x_2)$ is a mutation invariant of $\mcA$ if and only if there exist a symmetric polynomial $\Phi(X_1,\cdots,X_m)$ of $m$ variables over $\mbQ$ and a rational function $F(X_1,X_2)$, such that 
\begin{align}
		\mcT(x_1,x_2)=&{}\Phi(F(c_{1;1}(x_1,x_2),c_{2;1}(x_1,x_2)),\cdots,F(c_{1;m}(x_1,x_2),c_{2;m}(x_1,x_2))).\notag
	\end{align} 
\end{theorem}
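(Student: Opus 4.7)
The plan is to treat the two implications separately. The \emph{if} direction is the substantive one, relying on the fact that the ordered list of $m$ clusters is permuted by each generating mutation, and this is where \Cref{cluster mutations are equivalent to M-actions rank 2 case} enters. The \emph{only if} direction admits an almost tautological proof.

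For the \emph{if} direction, I assume $\mcT=\Phi(F(c_{1;1},c_{2;1}),\ldots,F(c_{1;m},c_{2;m}))$ with $\Phi$ symmetric. I would first invoke \Cref{cluster mutations are equivalent to M-actions rank 2 case} to translate any iterated mutation $\mu_{i_1}\cdots\mu_{i_r}$ applied to $(x_1,x_2)$ into a composition of M-actions; since in finite type ($mn\leq 3$) these compositions cycle through all $m$ clusters, the set $\{(c_{1;i},c_{2;i})\}_{i=1}^m$ is a single orbit under the mutation group. Hence each $\mu_k$ induces a permutation $\sigma_k$ of $\{1,\ldots,m\}$ on cluster labels, so the tuple $(F(c_{1;i},c_{2;i}))_{i=1}^m$ is permuted, and symmetry of $\Phi$ yields $\mu_k(\mcT)=\mcT$.

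For the \emph{only if} direction, given a mutation invariant $\mcT$, I would choose the tautological pair $F(X_1,X_2):=\mcT(X_1,X_2)$ and $\Phi(X_1,\ldots,X_m):=\frac{1}{m}(X_1+\cdots+X_m)$. Since each cluster $(c_{1;i},c_{2;i})$ is obtained from $(x_1,x_2)$ by a finite sequence of generating mutations, an induction on the length of this sequence (using the defining identity $\mcT(\mu_k(y_1,y_2))=\mcT(y_1,y_2)$ at each step) shows $\mcT(c_{1;i},c_{2;i})=\mcT(x_1,x_2)$ as rational functions for every $i$. Substituting, $\Phi(F(c_{1;1},c_{2;1}),\ldots,F(c_{1;m},c_{2;m}))=\frac{1}{m}\cdot m\cdot\mcT(x_1,x_2)=\mcT(x_1,x_2)$, giving the required representation.

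The main obstacle is the \emph{if} direction, specifically verifying that the $m$ clusters really do form a single orbit cyclically permuted by $\mu_1$ and $\mu_2$. I expect this to follow from the rank-2 finite-type classification (types $A_1\times A_1$, $A_2$, $B_2$, $G_2$) combined with the explicit identification of iterated mutations with M-actions furnished by \Cref{cluster mutations are equivalent to M-actions rank 2 case}, which makes the permutation structure transparent. The Laurent phenomenon of \cite{FZ1} is also used implicitly to guarantee that the substitutions $\mcT(c_{1;i},c_{2;i})$ are well-defined rational functions in $x_1,x_2$, so that the displayed identity holds in $\mbQ(x_1,x_2)$ rather than merely on an open subset.
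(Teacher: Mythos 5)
Your proposal is correct and follows essentially the same route as the paper: the \emph{only if} direction uses the identical tautological choice $\Phi=\frac{1}{m}(X_1+\cdots+X_m)$, $F=\mcT$, and the \emph{if} direction reduces (via the IMR condition and \Cref{cluster mutations are equivalent to M-actions rank 2 case}) to the fact that each M-action $\widetilde{\mcM_k}$ permutes the finite set of clusters (\Cref{M action is stable on seed set rank 2 case}), so symmetry of $\Phi$ gives invariance. The only cosmetic difference is that the paper phrases the permutation step in terms of the substitution maps $\widetilde{\mcM_i}$ rather than the mutations $\mu_k$ themselves, which is exactly the translation you invoke.
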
 
 A positive answer to the existence of (Laurent) mutation invariants of affine type is proved by giving two examples, see \Cref{example1} and \Cref{example2}.
However, different from the finite type and the affine type, we prove the non-existence of Laurent mutation invariants of non-affine type by using $\mfd$-vectors.
\begin{theorem}[\Cref{Thm4.3}]
	There does not exist a Laurent mutation invariant of non-affine type.
\end{theorem}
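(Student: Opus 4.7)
The plan is to exploit the $\mu_1$- and $\mu_2$-invariance of $\mcT$ (equivalent to mutation invariance by \Cref{cluster mutations are equivalent to M-actions rank 2 case}) to derive two rigid symmetry identities on the Laurent coefficients of $\mcT$, and then to force $mn\le 4$ by an elementary degree comparison.

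Assume, for contradiction, that $\mcT(x_1,x_2)=\sum_{(a,b)\in S}c_{a,b}x_1^ax_2^b$ is a non-constant Laurent mutation invariant with $mn\ge 5$, where $S\subset\mbZ^2$ is the finite support. Set the slice polynomials $f_A(v):=\sum_b c_{A,b}v^b$ and $g_B(u):=\sum_a c_{a,B}u^a$. I would convert the identity $\mcT((x_2^n+1)/x_1,\,x_2)=\mcT(x_1,x_2)$ into a Laurent-polynomial identity by multiplying both sides by $(x_2^n+1)^N$ with $N:=\max\{-a:(a,b)\in S\}$, and then match coefficients of $x_1^A$ to obtain
\[
f_{-A}(v)=(v^n+1)^A f_A(v),\qquad A\in\mbZ,
\]
and, symmetrically from $\mu_2$-invariance,
\[
g_{-B}(u)=(u^m+1)^B g_B(u),\qquad B\in\mbZ.
\]
In particular $f_A\ne 0\Longleftrightarrow f_{-A}\ne 0$ (and similarly for the $g_B$), so $A^*:=\max\{a:(a,b)\in S\}=-\min\{a:(a,b)\in S\}$ and $B^*:=\max\{b:(a,b)\in S\}=-\min\{b:(a,b)\in S\}$.

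Before the degree step, I would rule out the degenerate one-variable cases. If $A^*=0$, then $\mcT=f_0(x_2)$ and $\mu_2$-invariance becomes the identity $c_j(x_1^m+1)^j=c_{-j}$ in $\mbQ[x_1]$ for every $j\in\mbZ$ (writing $f_0=\sum_j c_j x_2^j$). For $j>0$ the left side is a polynomial in $x_1$ of degree $jm\ge 1$ while the right side is constant, forcing $c_j=0$ and hence $c_{-j}=0$; so $f_0$, and thus $\mcT$, is constant. The case $B^*=0$ is symmetric. Hence we may assume $A^*,B^*\ge 1$.

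For the contradiction, apply the symmetry at the extreme column $A=A^*$: the nonzero identity $f_{-A^*}(v)=(v^n+1)^{A^*}f_{A^*}(v)$ gives $\deg_v f_{-A^*}=\deg_v f_{A^*}+A^*n$. Since the supports of both $f_{A^*}$ and $f_{-A^*}$ lie in $[-B^*,B^*]$, we have $\deg_v f_{A^*}\ge -B^*$ and $\deg_v f_{-A^*}\le B^*$, whence $A^*n\le 2B^*$. Symmetrically at the extreme row, $B^*m\le 2A^*$. Multiplying yields $mn\le 4$, contradicting $mn\ge 5$. The delicate step is the clean derivation of the symmetry identities (handling the passage from rational to Laurent polynomials and the degenerate one-variable cases); once these are in hand, the degree multiplication is immediate and elegantly explains why only the affine case $mn=4$ admits non-trivial (Laurent) mutation invariants, consistent with \Cref{example1} and \Cref{example2}.
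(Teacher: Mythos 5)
Your proof is correct, and it takes a genuinely different route from the paper's. The paper first proves that the numerators $N_{i;t}$ of all cluster variables have constant term $1$ and that the $x_1$- and $x_2$-supports of a Laurent invariant are symmetric (its Lemma on ``degree twice''), and then derives a contradiction by substituting the cluster at $t_{2k}$ and letting $k\to\infty$: the explicit eigenvalue formulas for the $\mfd$-vectors force an identity $u(a-b)^{k-1}+v(a+b)^{k-1}=s$ for all large $k$, which is impossible since $a+b>1$. You instead stay entirely at the level of the two one-step mutations: writing $\mcT=\sum_A f_A(x_2)x_1^A$, the $\mu_1$-identity gives $f_{-A}(v)=(v^n+1)^Af_A(v)$ for every $A$ (your coefficient matching is valid --- one can even skip the multiplication by $(v^n+1)^N$ and compare coefficients over the field $\mbQ(x_2)$ directly), whence the support symmetry $A^*=-\min a$, and the extreme slices give $nA^*\le 2B^*$ and, symmetrically, $mB^*\le 2A^*$, so $mn\le 4$. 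Your treatment of the degenerate one-variable case and the cancellation in the integral domain $\mbQ[v^{\pm1}]$ are both sound, and the inequalities are sharp exactly on the two affine invariants \eqref{22 invariant} and \eqref{14 invariant}, which is a good consistency check. What each approach buys: yours is shorter, fully elementary, avoids the $\mfd$-vector general term formulas and the constant-term lemma entirely, and isolates the threshold $mn=4$ as the product of two degree inequalities; the paper's route is heavier but produces along the way the explicit $\mfd$-vector asymptotics and \Cref{term 1}, which it reuses to reprove the finite-type classification and to contrast the affine identities \eqref{eq1}--\eqref{eq2} with the non-affine failure.
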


The paper is organized as follows. In Section 1, we review basic definitions and properties about cluster algebras, and introduce the definitions of  mutation invariants (\Cref{mutation invariant}) and cluster algebras with IMR condition (\Cref{IMR condition}). In Section 2, we define M-actions (\Cref{M}), which are closely connected with cluster mutations, see \Cref{cluster mutations are equivalent to M-actions rank 2 case}. With the help of M-actions, we will prove \Cref{M action provides mutation invariants rank 2}. Two examples of mutation invariants for affine type are given, see \Cref{example1} and \Cref{example2}. As the end of Section 2, we prove \Cref{Thm4.3} by using the general term formulas of $\mfd$-vectors (\Cref{d}). Some Diophantine equations arising from mutation invariants are solved in Section 3.

\section*{Conventions}
In this paper, we denote $\mbZ$, $\mbN$ and $\mbQ$ the integer ring, natural number set and the rational number field respectively. Let $\mcF$ be a field that is isomorphic to the rational function field of $n$ variables over $\mbQ$. 
Denote $\mbQ[x_1,\dots,x_n]$ and $\mbQ(x_1,\dots,x_n) $ the polynomial ring and the rational function field of $x_1,\dots,x_n$ respectively. Let $\mbQ^*(x_1,\cdots,x_n)=\mbQ(x_1,\cdots,x_n)-\{0\}$. The set of Laurent polynomials of $x_{1},\dots,x_{n}$ over $\mbQ$ is $\mbQ[x_{1}^{\pm 1},\dots,x_{n}^{\pm 1}]$. For an integer $a\in \mbZ$, let $[a]_{+}=\max(a,0)$. An $n\times n$  integer matrix $B$ is called \emph{skew-symmetrizable} if there is a positive integer diagonal matrix $D$ such that $DB$ is skew-symmetric. 
\section{Preliminaries}
\label{sec2}
Firstly, we provide a brief overview of mutations and cluster algebras, along with their basic definitions and properties as described in \cite{FZ4, N}.

 A pair $\Sigma=(\mfx,B)$ is called a \emph{seed} if $\mfx=(x_1,\dots,x_n)$ is an $n$-tuple of algebraically independent and generating elements in $\mcF$ and $B=(b_{ij})_{n\times n}$ is a skew-symmetrizable integer matrix. The set $\mfx$ is called a \emph{cluster}, each $x_{i}$ is called \emph{cluster variable} and $B$ is called the \emph{exchange matrix}. One can see that $\mcF=\mbQ(x_1,\dots,x_n)$. Fix $k\in \{1,\dots,n\}$, a new seed $\mu_{k}(\Sigma)=(\mfx^{\prime},B^{\prime})$ is defined as follows, where $\mfx^{\prime}=(x_{1}^{\prime},\dots,x_{n}^{\prime})$ is given by 
	\begin{align} \notag 
		x_{i}^{\prime}=\left\{
		\begin{array}{ll}
			x_{k}^{-1}(\prod\limits_{j=1}^{n}x_{j}^{[b_{jk}]_{+}}+\prod\limits_{j=1}^{n}x_{j}^{[-b_{jk}]_{+}}), &   i=k, \\
			x_{i}, &   i \neq k. 
		\end{array} \right.
	\end{align}
and the entries of $B^{\prime}=(b_{ij}^{\prime})_{n\times n}$ are given by 
	\begin{align} \notag 
		b_{ij}^{\prime}=\left\{
		\begin{array}{ll}
			-b_{ij}, &   i=k \;\;\mbox{or}\;\; j=k, \\
			b_{ij}+[b_{ik}]_{+}b_{kj}+b_{ik}[-b_{kj}]_{+}, &   \mbox{otherwise}. 
		\end{array} \right.
	\end{align}
For the well-definedness of this new seed, we refer to \cite{N}. It is not hard to see that $\mu_{k}$ is an involution, and we call $\mu_k(\Sigma)$ the \emph{k-direction mutation} of $\Sigma$. 

Let $\mbT_{n}$ be an $n$-regular tree whose edges are labeled by $1,\dots,n$. Let $t$ and $t^{\prime}$ be vertices of $\in \mbT_{n}$ which are connected by a $k$-labeled edge, and we denote it by $t \stackrel{k}{\longleftrightarrow } t^{\prime}$. A set of seeds $\mathbf{\Sigma}=\{\Sigma_{t}=(\mfx_{t},B_{t})| t\in \mbT_{n}\}$ indexed by vertices of $\mbT_{n}$ is called a \emph{cluster pattern} if $\Sigma_{t^{\prime}}=\mu_{k}(\Sigma_{t})$ holds for any $t \stackrel{k}{\longleftrightarrow} t^{\prime}$. In this case, the seed  $\Sigma_{t}=(\mfx_{t},B_{t}) $ is denoted by $\mathbf{x_{t}}=(x_{1;t},\dots,x_{n;t})$, $B_{t}=(b_{ij;t})_{n\times n}$. A cluster pattern $\mathbf{\Sigma}$ is called of \emph{finite type} (resp. \emph{infinite type}) if it contains finitely (resp. infinitely) many distinct seeds. 

The \emph{cluster algebra} $\mcA=\mcA(\mathbf{\Sigma})$ is the $\mbQ$-subalgebra of $\mcF$ generated by all cluster variables $\{x_{i;t}|i=1,\dots,n;t\in\mbT_{n}\}$. Here $n$ is called the \emph{rank} of $\mcA$ or $\mathbf{\Sigma}$. The cluster algebra $\mcA(\mathbf{\Sigma})$ is called of \emph{finite type} (resp. \emph{infinite type}) if $\mathbf{\Sigma}$ is of finite type (resp. infinite type).
Fix an arbitrary vertex $t_0\in \mbT_{n}$, denote $\mfx_{t_0}=(x_1,\dots,x_n)$, and we call $\Sigma_{t_0}$ the \emph{initial seed}. Then the set of all clusters labeled by $t$ can be written as  
$$\mfx_t=((c_{1;t}(x_1,\cdots,x_n),\cdots,c_{n;t}(x_1,\cdots,x_n)),$$ where $c_{i,t}(x_1,\cdots,x_n)$ is a Laurent polynomial in $x_1,\cdots,x_n$ with integer coefficients, see \cite[Theorem 3.1]{FZ1}. 

For any cluster variable $x_{i;t}$, the integer vector $\mfd_{i;t}=(d_{ji;t})_{j=1}^n$ is called the \emph{denominator vector} ($\mfd$-vector) of $x_{i;t}$, where $-d_{ji;t}$ is the lowest degree of $x_{j}$ in the Laurent polynomial expression of $x_{i;t}$ in $\mfx_{t_0}$. That is,  
\begin{align}
	x_{i;t}=\dfrac{N_{i;t}(x_1,\dots, x_n)}{x_1^{d_{1i;t}}\cdots x_n^{d_{ni;t}}},\label{express of d-vector}
\end{align} where $N_{i;t}(x_1,\dots, x_n)$ is a polynomial with coefficients in $\mathbb{Z}$ which is not divisible by any $x_j$. The recurence relations for $\mfd$-vectors are given as follows, cf. \cite[Section 4.3]{FZ3}.
		\begin{align} \label{d-vector}
		\mfd_{l;t^{\prime}}=\left\{
		\begin{array}{ll}
			\mfd_{l;t}, &   l\neq k, \\
			-\mfd_{k;t}+\text{max}\ (\sum\limits_{i=1}^n[b_{ik;t}]_{+}\mfd_{i;t},\sum\limits_{i=1}^n[-b_{ik;t}]_{+}\mfd_{i;t}), &   l=k,
		\end{array} \right.
	\end{align}
	for $t \stackrel{k}{\longleftrightarrow} t^{\prime}$.

\begin{definition}\label{mutation invariant}
	 A non-constant and reduced rational function $\mcT(x_1,\dots,x_n)\in \mbQ(x_1,\dots,x_n)$ is called a \emph{mutation invariant of $\mcA$} if for any $t \in \mbT_{n}$,
	\begin{equation}
		\mcT(x_{1},\cdots,x_{n})=\mcT(x_{1;t},\cdots,x_{n;t}).\notag
	\end{equation}
	Furthermore, if $\mcT(x_1,\dots,x_n) \in \mbQ[x_{1}^{\pm 1},\dots,x_{n}^{\pm 1}]$, it is called a \emph{Laurent mutation invariant of $\mcA$}. 
\end{definition}
\begin{remark}
	Without ambiguity, we identify the initial cluster variables $x_1,\dots,x_n$ with the variables of rational function field $\mbQ(x_1,\dots,x_n)$. In addition, the notion of mutation invariants first appeared vaguely in \cite[Proposition 2.2]{L}.
\end{remark}
\begin{definition}\label{*}  Let
	$(f_1,\cdots,f_n) \in  \mbQ^*(x_1,\cdots,x_n)^{\times n}$ and $\mathbf{\Sigma}$ be a cluster pattern. For any $t \in \mbT_{n}$ and $k \in \{1,\cdots,n\}$, define a map $\mu_k^t$ from $\mbQ^*(x_1,\cdots,x_n)^{\times n}$ to itself in the following way
	\begin{align} \notag 
		\mu_k^t(f_i)=\left\{
		\begin{array}{ll}
			f_{k}^{-1}(\prod\limits_{j=1}^{n}f_{j}^{[b_{jk;t}]_{+}}+\prod\limits_{j=1}^{n}f_{j}^{[-b_{jk;t}]_{+}}), &   i=k, \\
			f_i, &   i \neq k, 
		\end{array} \right.
	\end{align}
	where $b_{jk;t}$ is the $(j,k)$-component of $B_t$.
\end{definition}
Notice that $\mu_k^t$ is an involution and if $((f_1,\cdots,f_n),B_t)$ is a seed of the cluster algebra $\mcA$, then $\mu_k^t$ can be viewed as the cluster mutation along the $k$-th direction of $((f_1,\cdots,f_n),B_t)$.
\begin{definition}\label{IMR condition}
	A \emph{cluster algebra with $\IMR$ condition} is a cluster algebra $\mcA$ which satisfies
	$\mu_k^t=\mu_k^{t_{0}},$ for any  $t \in \mbT_{n}$ and $k \in \{1,\cdots,n\}.$
\end{definition}
Here IMR means \emph{invariant mutation rules}. Given a cluster algebra with $\IMR$ condition, denote $\mu^t_k$ by $\mu_k$ for any  $t \in \mbT_{n}$ and $k \in \{1,\cdots,n\}.$ It can be checked directly that the following lemma holds.
\begin{lemma}\label{equivalent} Let $\mcA$ be a cluster algebra of rank $n$ with $\IMR$ condition and $\mcT(x_{1},\cdots,x_{n})$ be a non-constant rational function. The following are equivalent.
\begin{enumerate}
	\item $\mcT(x_{1},\cdots,x_{n})$ is a mutation invariant of $\mcA$.
	\item $\mcT(x_{1},\cdots,x_{n})=\mcT(\mu_k(x_{1},\cdots,x_{n}))$, for any $k\in \{1,\dots,n\}$.
\end{enumerate}
\end{lemma}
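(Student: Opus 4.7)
The plan is to handle the two directions separately, with the real content in $(2)\Rightarrow(1)$. For $(1)\Rightarrow(2)$, consider the vertex $t$ adjacent to $t_{0}$ along the $k$-labeled edge. By the definition of a cluster pattern and the $\IMR$ condition, $\mfx_{t}=\mu_{k}^{t_{0}}(x_{1},\dots,x_{n})=\mu_{k}(x_{1},\dots,x_{n})$, so the invariance in (1) applied at this $t$ yields precisely the identity in (2); letting $k$ range over $\{1,\dots,n\}$ gives the full statement.

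For $(2)\Rightarrow(1)$, I would proceed by induction on the edge-distance $r=d(t_{0},t)$ in $\mbT_{n}$. Fix the unique path
\[
t_{0}\stackrel{k_{1}}{\longleftrightarrow}t_{1}\stackrel{k_{2}}{\longleftrightarrow}\cdots\stackrel{k_{r}}{\longleftrightarrow}t_{r}=t.
\]
Under the $\IMR$ condition, the mutation map along each edge $t_{i-1}\stackrel{k_{i}}{\longleftrightarrow}t_{i}$ equals the single fixed map $\mu_{k_{i}}$, independently of the intermediate exchange matrix $B_{t_{i-1}}$. Consequently, the cluster at $t$ admits the closed form
\[
\mfx_{t}=\mu_{k_{r}}\circ\mu_{k_{r-1}}\circ\cdots\circ\mu_{k_{1}}(x_{1},\dots,x_{n}).
\]
The base case $r=0$ is trivial. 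For the inductive step, assume $\mcT(\mfx_{t_{r-1}})=\mcT(x_{1},\dots,x_{n})$. Since (2) is an identity of rational functions in the formal variables $x_{1},\dots,x_{n}$, and since the entries of $\mfx_{t_{r-1}}$ are algebraically independent generators of $\mcF$, one may substitute $\mfx_{t_{r-1}}$ for $(x_{1},\dots,x_{n})$ in the identity (2) taken at index $k_{r}$ to obtain
\[
\mcT(\mfx_{t_{r-1}})=\mcT(\mu_{k_{r}}(\mfx_{t_{r-1}}))=\mcT(\mfx_{t_{r}}).
\]
Combined with the inductive hypothesis, this closes the induction.

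There is no serious obstacle here; the one point worth stating carefully is that (2) is a symbolic identity in $\mbQ(x_{1},\dots,x_{n})$, so substitution of an algebraically independent tuple of rational functions is legitimate. This is precisely what the $\IMR$ condition buys, since it guarantees that each $\mu_{k}$ is a single well-defined rational self-map of $\mcF$ rather than a seed-dependent family of maps, so the composition along a path is literally an iterated substitution into the same fixed formulae.
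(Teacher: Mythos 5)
Your proof is correct and is exactly the ``direct check'' that the paper leaves to the reader: the forward direction by evaluating the invariance at the vertices adjacent to $t_{0}$, and the converse by induction along the unique path in $\mbT_{n}$, using the $\IMR$ condition to express $\mfx_{t}$ as a composition of the fixed maps $\mu_{k_{i}}$ and then substituting the algebraically independent tuple $\mfx_{t_{r-1}}$ into the identity (2). No gaps; this matches the intended argument.
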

As the simplest examples of cluster algebras with IMR condition, the next lemma can be obtained immediately.
\begin{lemma}
	All the cluster algebras of rank 2 are cluster algebras with $\IMR$ condition. 
\end{lemma}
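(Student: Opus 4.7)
The plan is straightforward: show that for a rank 2 cluster algebra, the mutation formula $\mu_k^t$ depends on $B_t$ only through quantities that are invariant under mutation. Since rank 2 exchange matrices have an essentially unique signed shape, this will give the claim.

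First I would record the shape of $B_t$. The initial exchange matrix is $B_{t_0}=\begin{pmatrix}0 & m\\ -n & 0\end{pmatrix}$ with $m,n\in\mbN$. Directly from the mutation rule for exchange matrices, $\mu_1$ and $\mu_2$ simply negate the off-diagonal entries, so every $B_t$ is either $B_{t_0}$ itself or $-B_{t_0}=\begin{pmatrix}0 & -m\\ n & 0\end{pmatrix}$. In particular, the only off-diagonal entries that ever appear in column $1$ are $\pm n$ (at position $(2,1)$), and in column $2$ are $\pm m$ (at position $(1,2)$).

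Next I would unpack \Cref{*} at rank $2$ for direction $k=1$ and a fixed $t$. Writing $b_{21;t}=\varepsilon n$ with $\varepsilon\in\{+1,-1\}$, the product defining $\mu_1^t(f_1)$ collapses to
\begin{align*}
\mu_1^t(f_1)=f_1^{-1}\bigl(f_2^{[\varepsilon n]_+}+f_2^{[-\varepsilon n]_+}\bigr).
\end{align*}
Regardless of the value of $\varepsilon$, the multiset $\{[\varepsilon n]_+,[-\varepsilon n]_+\}$ equals $\{n,0\}$, so the right-hand side is $f_1^{-1}(f_2^n+1)$, which is exactly $\mu_1^{t_0}(f_1)$. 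Since $\mu_1^t$ fixes $f_2$ by definition, we conclude $\mu_1^t=\mu_1^{t_0}$ as maps on $\mbQ^*(x_1,x_2)^{\times 2}$. The argument for $k=2$ is identical, using $b_{12;t}=\pm m$.

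There is no real obstacle here; the only thing worth flagging is why this argument is special to rank $2$. In higher rank, column $k$ of $B_t$ may contain several nonzero entries whose signs can vary independently with $t$, so the two products $\prod_j f_j^{[b_{jk;t}]_+}$ and $\prod_j f_j^{[-b_{jk;t}]_+}$ can genuinely differ from their values at $t_0$; the collapse used above is unavailable. Combining the two cases $k=1,2$ yields $\mu_k^t=\mu_k^{t_0}$ for all $t\in\mbT_2$ and all $k\in\{1,2\}$, which is precisely the $\IMR$ condition of \Cref{IMR condition}, completing the proof.
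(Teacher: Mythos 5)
Your proof is correct and follows the same route as the paper: the paper's entire argument is the observation that the mutation equivalence class of $B_{t_0}$ is $\{B_{t_0},-B_{t_0}\}$, and you simply spell out the remaining (implicit) step that negating the off-diagonal entries leaves the multiset $\{[b_{jk}]_+,[-b_{jk}]_+\}$ — and hence the map $\mu_k^t$ — unchanged. Your remark on why the collapse fails in higher rank is a nice addition but not needed for the claim.
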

\begin{proof}
	The mutation equivalence class of the initial exchange matrix $B_{t_0}$ with rank 2 is $\{B_{t_0},-B_{t_0}\}$.
\end{proof}
\begin{remark}
	For cluster algebras of rank $2$, by \Cref{*} and the cluster mutation rules, it is direct that each cluster  can be expressed by the initial cluster $(x_1,x_2)$ either $\mu_i(\mu_j\mu_i)^k(x_1,x_2)$ or $(\mu_j\mu_i)^k(x_1,x_2)$, where $k\in \mbN$, $(i,j)=(1,2)\ \text{or}\ (2,1)$.
\end{remark}
\begin{example} There are only two cluster algebras of rank 3 with $\IMR$ condition \cite{L}, whose initial exchange matrix (up to permutation) is respectively
		\begin{center}
			$B_{t_0}=\begin{pmatrix}0 & 2 & -2\\ -2 & 0 & 2\\ 2 & -2 & 0\end{pmatrix}$ and $\begin{pmatrix}0 & 1 & -1\\ -4 & 0 & 2\\ 4 & -2 & 0\end{pmatrix}$.
		\end{center}
By direct calculation, the mutation equivalence class is both $\{B_{t_0},-B_{t_0}\}$.
\end{example}
\section{Mutation invariants of cluster algebras of rank $2$}
\label{sec3}
In this section, we define M-actions and find the relations between them and cluster mutations. Moreover, we use them to study and classify the  mutation invariants of cluster algebras of rank $2$.
\subsection{M-actions and mutation invariants }\

Let $\mcA$ be a cluster algebra with the initial exchange matrix 
\begin{align}
\begin{pmatrix}
	0 & m \\ -n & 0
\end{pmatrix},\ \text{where}\ m,n \in \mbN.\label{matrix}
\end{align}
Then the mutation rules on $\mbQ^*(x_1,x_2)\times \mbQ^*(x_1,x_2)$ as \Cref{*} are
$$\mu_1(f,g)=(m_1(f,g),g),\ \mu_2(f,g)=(f,m_2(f,g)),$$
where $m_1(f,g)=\dfrac{g^n+1}{f}$, $m_2(f,g)=\dfrac{f^m+1}{g}$ and $f,g \in \mbQ^*(x_1,x_2)$.
\begin{definition}\label{M}
Let four maps be as follows, where $\mcM_i$ is the map from $\mbQ(x_1,x_2)$ to itself and $\widetilde{\mcM_i}$ is the map from $\mbQ(x_1,x_2) \times \mbQ(x_1,x_2)$ to itself.
\begin{enumerate}
	\item $\mcM_1:
	f(x_1,x_2) \mapsto f(m_1(x_1,x_2),x_2)$,
	\item $\mcM_2:
	f(x_1,x_2) \mapsto f(x_1, m_2(x_1,x_2))$,
	\item $\widetilde{\mcM_1}:
	(f(x_1,x_2),g(x_1,x_2)) \mapsto (f(m_1(x_1,x_2),x_2),g(m_1(x_1,x_2),x_2)$,
	\item $\widetilde{\mcM_2}:
	(f(x_1,x_2),g(x_1,x_2)) \mapsto (f(x_1,m_2(x_1,x_2)),g(x_1,m_2(x_1,x_2))$.
\end{enumerate}
Here $\widetilde{\mcM_1}$ and $\widetilde{\mcM_2}$ are called \emph{M-actions}.
\end{definition}
It can be checked directly that the maps in \Cref{M} are all involutions. The following lemma exhibits the relations between $\mcM_i$ and $\widetilde{\mcM_i}$, which can be proved by definition.
\begin{lemma}\label{M-actions}
	Let $f_1(x_1,x_2), f_2(x_1,x_2) \in \mathbb{Q}(x_1,x_2)$. For any $k\in \mbN$,
	\begin{enumerate}
		\item $f_1((\widetilde{\mcM_i}\widetilde{\mcM_j})^k(x_1,x_2))=(\mcM_i\mcM_j)^k(f_1(x_1,x_2))$.
		\item $f_1(\widetilde{\mcM_j}(\widetilde{\mcM_i}\widetilde{\mcM_j})^k(x_1,x_2))=\mcM_j(\mcM_i\mcM_j)^k(f_1(x_1,x_2))$.
		\item $(\widetilde{\mcM_i}\widetilde{\mcM_j})^k(f_1(x_1,x_2), f_2(x_1,x_2))=((\mcM_i\mcM_j)^k(f_1(x_1,x_2)),(\mcM_i\mcM_j)^k(f_2(x_1,x_2)))$, where $(i,j)=(1,2), (2,1)$.
		\item $\widetilde{\mcM_j}(\widetilde{\mcM_i}\widetilde{\mcM_j})^k(f_1(x_1,x_2), f_2(x_1,x_2))=(\mcM_j(\mcM_i\mcM_j)^k(f_1(x_1,x_2)),\mcM_j(\mcM_i\mcM_j)^k(f_2(x_1,x_2)))$, where $(i,j)=(1,2), (2,1)$.
	\end{enumerate}
\end{lemma}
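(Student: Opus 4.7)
My plan is to observe that $\mcM_i$ is the unique $\mbQ$-algebra endomorphism of $\mbQ(x_1,x_2)$ specified by its values on the two generators $x_1$ and $x_2$, and that $\widetilde{\mcM_i}$ is precisely this endomorphism applied componentwise to a pair of rational functions. Once this viewpoint is in place, all four identities collapse to the universal property of $\mbQ(x_1,x_2)$: a $\mbQ$-algebra endomorphism is determined by the images of $x_1$ and $x_2$, and the substitution operation $f \mapsto f(p,q)$ coincides with the endomorphism sending $x_1 \mapsto p$, $x_2 \mapsto q$.

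Concretely, I would first check that $\mcM_1$ (and similarly $\mcM_2$) is a well-defined endomorphism of the rational function field, which reduces to verifying that $m_1(x_1,x_2) = (x_2^n+1)/x_1$ and $x_2$ are algebraically independent over $\mbQ$; this is immediate from the explicit formula, since $x_1 = (x_2^n+1)/m_1(x_1,x_2)$ shows that $\mbQ(m_1,x_2)$ has transcendence degree $2$. Directly from \Cref{M} one reads off the componentwise identity $\widetilde{\mcM_i}(f,g) = (\mcM_i(f), \mcM_i(g))$. A routine induction on $k$ using this identity then yields item~(3), and prepending an extra $\widetilde{\mcM_j}$ yields item~(4).

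For item~(1), set $(p,q) := (\widetilde{\mcM_i}\widetilde{\mcM_j})^k(x_1,x_2)$. Applying~(3) to the pair $(x_1,x_2)$ shows that $p = (\mcM_i\mcM_j)^k(x_1)$ and $q = (\mcM_i\mcM_j)^k(x_2)$. Since $(\mcM_i\mcM_j)^k$ is a $\mbQ$-algebra endomorphism of $\mbQ(x_1,x_2)$ and coincides with the substitution $x_1\mapsto p$, $x_2\mapsto q$ on the generators, the two endomorphisms agree on every $f_1 \in \mbQ(x_1,x_2)$, giving $f_1(p,q) = (\mcM_i\mcM_j)^k(f_1)$, which is the content of~(1). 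Item~(2) follows by the same argument with an outer $\widetilde{\mcM_j}$ inserted.

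The only mildly delicate point is the well-definedness check for $\mcM_i$ as an endomorphism of the fraction field rather than just a polynomial subring; once that algebraic-independence observation is recorded, everything else is purely formal, in agreement with the paper's comment that the lemma follows directly from the definitions. I do not anticipate any serious obstacle.
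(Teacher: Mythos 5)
Your proposal is correct and matches the paper, which offers no written proof beyond the remark that the lemma "can be proved by definition"; your endomorphism/universal-property framing is just the standard way of making that unwinding precise. The componentwise identity $\widetilde{\mcM_i}(f,g)=(\mcM_i(f),\mcM_i(g))$, the induction for (3)--(4), and the algebraic-independence check that makes $\mcM_i$ a well-defined field endomorphism (so that substitution commutes with it, giving (1)--(2)) are all sound.
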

\begin{notations}
	\label{pp}
	Let $p_0=(x_1,x_2)\in \mbQ(x_1,x_2) \times \mbQ(x_1,x_2)$. For technicality, we artificially assume that $m_1(p_{-2})=x_1$ and  $m_2(p_{-1})=x_2$. 
	For any $k\in \mbN$, denote $$p_{2k}=(m_1(p_{2k-2}),m_2(p_{2k-1})),\ p_{2k+1}=(m_1(p_{2k}),m_2(p_{2k-1})).$$ Under the notations, we get a lemma which reveals the relation between $p_{i}$ and $\mu_{j}$.
\end{notations}
\begin{lemma}\label{little p} The following two identities hold.
	\begin{enumerate}
		\item $p_{2k}=(\mu_2\mu_1)^k(x_1,x_2)$, where $k\in \mbN$.
		\item $p_{2k+1}=\mu_1(\mu_2\mu_1)^k(x_1,x_2)$, where $k\in \mbN$.
	\end{enumerate}
\end{lemma}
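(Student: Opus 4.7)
The plan is to prove both parts of the lemma by induction on $k$, with the key observation that the recurrences defining $p_{2k}$ and $p_{2k+1}$ are designed exactly to encode the alternating applications of $\mu_1$ and $\mu_2$: by \Cref{*}, the map $\mu_1$ only modifies the first coordinate and $\mu_2$ only modifies the second, so composing them preserves the ``frozen'' coordinates appearing on the right-hand sides of the recurrences.

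Concretely, I would first establish the auxiliary identities
\[
p_{2k+1} = \mu_1(p_{2k}) \quad \text{and} \quad p_{2k+2} = \mu_2(p_{2k+1})
\]
for every $k \in \mbN$. Granted these, both parts of the lemma fall out by a routine induction: assuming $p_{2k} = (\mu_2\mu_1)^k(x_1,x_2)$, we get $p_{2k+1} = \mu_1(\mu_2\mu_1)^k(x_1,x_2)$ directly, and then $p_{2k+2} = \mu_2\mu_1(\mu_2\mu_1)^k(x_1,x_2) = (\mu_2\mu_1)^{k+1}(x_1,x_2)$. The base case is just $p_0 = (x_1,x_2) = (\mu_2\mu_1)^0(x_1,x_2)$, together with $p_1 = (m_1(x_1,x_2), x_2) = \mu_1(x_1,x_2)$, where the convention $m_2(p_{-1}) = x_2$ is invoked.

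To prove the auxiliary identities, write $p_{2k} = (a_k, b_k)$. The defining recurrence $p_{2k} = (m_1(p_{2k-2}), m_2(p_{2k-1}))$, interpreted via $m_1(p_{-2}) = x_1$ and $m_2(p_{-1}) = x_2$ at $k=0$, gives $b_k = m_2(p_{2k-1})$. Hence
\[
p_{2k+1} = (m_1(p_{2k}), m_2(p_{2k-1})) = (m_1(a_k,b_k), b_k) = \mu_1(p_{2k}).
\]
Symmetrically, writing $p_{2k+1} = (\alpha_k, \beta_k)$, the recurrence yields $\alpha_k = m_1(p_{2k})$, and hence
\[
p_{2k+2} = (m_1(p_{2k}), m_2(p_{2k+1})) = (\alpha_k, m_2(\alpha_k, \beta_k)) = \mu_2(p_{2k+1}).
\]
No genuine obstacle arises; the entire content of the argument is this reinterpretation of the recurrences as cluster mutations, and the only subtle point is the bookkeeping of the artificial conventions $m_1(p_{-2})=x_1$ and $m_2(p_{-1})=x_2$ needed to start the recursion cleanly at $k=0$.
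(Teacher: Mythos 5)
Your proof is correct and takes essentially the same route as the paper: the paper's double induction carries out exactly the one-step computations $p_{2n+1}=\mu_1(p_{2n})$ and $p_{2n+2}=\mu_2(p_{2n+1})$ inside the inductive step, which you merely isolate as standalone auxiliary identities before running the (then trivial) induction. The bookkeeping of the conventions $m_1(p_{-2})=x_1$ and $m_2(p_{-1})=x_2$ at the base case is handled the same way in both arguments.
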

\begin{proof}
	We take the double induction on $k$. It is clear that 
	$$p_0=(x_1,x_2)=(\mu_2\mu_1)^0(x_1,x_2),\ p_1=(m_1,x_2)=\mu_1(\mu_2\mu_1)^0(x_1,x_2).$$ 
	Assume that the two identities hold for $k=n$. When $k=n+1$, we have 
	\begin{align}\notag
	(\mu_2\mu_1)^{n+1}(x_1,x_2)=&{}
\mu_2(\mu_1(\mu_2\mu_1)^n(x_1,x_2)) \notag \\
		=&{}\mu_2(m_1(p_{2n}),m_2(p_{2n-1})) \notag \\
		=&{}(m_1(p_{2n}),m_2(m_1(p_{2n}),m_2(p_{2n-1}))) \notag \\
		=&{}(m_1(p_{2n}),m_2(p_{2n+1})) \notag \\
		=&{}p_{2n+2} \notag,
	\end{align}
	and
	\begin{align}\notag
		\mu_1(\mu_2\mu_1)^{n+1}(x_1,x_2)
		 \notag 
		=&{} \mu_1(m_1(p_{2n}),m_2(p_{2n+1})) \notag \\
		=&{}(m_1(m_1(p_{2n}),m_2(p_{2n+1})),m_2(p_{2n+1})) \notag \\
		=&{}(m_1(p_{2n+2}),m_2(p_{2n+1})) \notag \\
		=&{}p_{2n+3}. \notag
	\end{align}
	Hence, the lemma holds.
\end{proof}
\begin{notations}
	\label{PP}
Let $P_0=p_0=(x_1,x_2)\in \mbQ(x_1,x_2) \times \mbQ(x_1,x_2)$. For technicality, we artificially assume that $m_1(P_{-1})=x_1$ and  $m_2(P_{-2})=x_2$.
	For any $k\in \mbN$, denote $$P_{2k}=(m_1(P_{2k-1}),m_2(P_{2k-2})),\ P_{2k+1}=(m_1(P_{2k-1}),m_2(P_{2k})).$$ Similarly, we get a lemma which reveals the relations between $P_{i}$ and $\mu_{j}$.
\end{notations}

\begin{lemma}\label{big P}The following two identities hold.
	\begin{enumerate}
		\item $P_{2k}=(\mu_1\mu_2)^k(x_1,x_2)$, where $k\in \mbN$.
		\item $P_{2k+1}=\mu_2(\mu_1\mu_2)^k(x_1,x_2)$, where $k\in \mbN$.
	\end{enumerate}
\end{lemma}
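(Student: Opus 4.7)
The plan is to mirror the proof of \Cref{little p} almost verbatim, performing a simultaneous double induction on $k$ with the roles of $\mu_1$ and $\mu_2$ swapped. This is natural because the recursive definition of $P_{2k},P_{2k+1}$ in \Cref{PP} is obtained from that of $p_{2k},p_{2k+1}$ in \Cref{pp} by exchanging the indices $1$ and $2$, and the conventions $m_1(P_{-1})=x_1$ and $m_2(P_{-2})=x_2$ precisely match this swap.

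For the base case $k=0$, I would verify directly that $P_0=(x_1,x_2)=(\mu_1\mu_2)^0(x_1,x_2)$ and, using the artificial convention $m_1(P_{-1})=x_1$,
\[
P_1=(m_1(P_{-1}),m_2(P_0))=(x_1,m_2(x_1,x_2))=\mu_2(x_1,x_2)=\mu_2(\mu_1\mu_2)^0(x_1,x_2).
\]
For the inductive step, assume both identities hold at $k=n$. Then for identity (1) at $k=n+1$, I compute
\[
(\mu_1\mu_2)^{n+1}(x_1,x_2)=\mu_1\bigl(\mu_2(\mu_1\mu_2)^n(x_1,x_2)\bigr)=\mu_1(P_{2n+1}),
\]
expand $P_{2n+1}=(m_1(P_{2n-1}),m_2(P_{2n}))$, and apply the mutation rule $\mu_1(f,g)=(m_1(f,g),g)$ from the beginning of Section 2 to obtain $(m_1(P_{2n+1}),m_2(P_{2n}))=P_{2n+2}$. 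Identity (2) at $k=n+1$ is then established by applying $\mu_2$ to $P_{2n+2}$: unpacking $P_{2n+2}=(m_1(P_{2n+1}),m_2(P_{2n}))$ and using $\mu_2(f,g)=(f,m_2(f,g))$ yields $(m_1(P_{2n+1}),m_2(P_{2n+2}))=P_{2n+3}$.

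There is no real obstacle here beyond bookkeeping; the content is purely computational and the argument is symmetric to the preceding proof. The one subtlety worth flagging is the correct use of the degenerate conventions $m_1(P_{-1})=x_1$ and $m_2(P_{-2})=x_2$, which are needed to make the recursion well defined at the very first step and to align $P_1$ with $\mu_2(x_1,x_2)$. Once these are in place, the induction closes immediately.
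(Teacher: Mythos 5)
Your proof is correct, and it is exactly the argument the paper intends: the paper's proof of this lemma simply says it is ``similar to the proof of \Cref{little p}'', which is precisely the swapped-index double induction you carry out. The base case via $m_1(P_{-1})=x_1$ and both inductive steps check out against the definitions in \Cref{PP} and the mutation rules $\mu_1(f,g)=(m_1(f,g),g)$, $\mu_2(f,g)=(f,m_2(f,g))$.
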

\begin{proof}
	It is similar to the proof of \Cref{little p}. 
\end{proof}
Then the relations among $\mcM_i$, $p_i$ and $P_i$ are as follows.
\begin{lemma}\label{m p}
	The following identities hold.
	\begin{enumerate}
		\item 
		$\mcM_2(m_1(p_{2k}))=m_1(P_{2k+1})$, $\mcM_2(m_2(p_{2k+1}))=m_2(P_{2k+2})$, where $k\in \mbN$.
		\item 	
		$\mcM_1(m_1(P_{2k+1}))=m_1(p_{2k+2})$, $\mcM_1(m_2(P_{2k}))=m_2(p_{2k+1})$, where $k\in \mbN$.
	\end{enumerate}
\end{lemma}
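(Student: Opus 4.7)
The plan is to prove the two identities in (1) simultaneously by induction on $k$, and then treat (2) by an identical argument with the roles of $1$ and $2$ swapped. The crucial structural fact is that $\mcM_2$ is the ring endomorphism of $\mbQ(x_1,x_2)$ given by substituting $x_2 \mapsto m_2(x_1,x_2)$ (and fixing $x_1$), so it commutes with sums, products, and quotients. This is what lets us push $\mcM_2$ through the defining recursions for $p_i$ and $P_i$ from \Cref{pp} and \Cref{PP}.

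For the base case $k=0$ of (1), I would unfold $p_0=(x_1,x_2)$ to get $m_1(p_0)=(x_2^n+1)/x_1$, apply $\mcM_2$ to obtain $(m_2(x_1,x_2)^n+1)/x_1$, and then compare with $P_1=(m_1(P_{-1}),m_2(P_0))=(x_1,m_2(x_1,x_2))$, whose first-direction output is exactly this expression. The second base identity uses the artificial convention $m_2(p_{-1})=x_2$ to write $m_2(p_1)=(m_1(p_0)^m+1)/x_2$; applying $\mcM_2$ and invoking the first base identity gives $(m_1(P_1)^m+1)/m_2(P_0)$, which matches $m_2(P_2)$ by the definition of $P_2$.

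For the inductive step, assume both halves of (1) hold at index $k$. Using $p_{2k+2}=(m_1(p_{2k}),m_2(p_{2k+1}))$ and the formula $m_1(f,g)=(g^n+1)/f$, I would write
$$\mcM_2\bigl(m_1(p_{2k+2})\bigr)=\frac{\mcM_2(m_2(p_{2k+1}))^n+1}{\mcM_2(m_1(p_{2k}))}=\frac{m_2(P_{2k+2})^n+1}{m_1(P_{2k+1})},$$
where the first equality is the homomorphism property and the second is the inductive hypothesis; the right-hand side is $m_1(P_{2k+3})$ by the recursion $P_{2k+3}=(m_1(P_{2k+1}),m_2(P_{2k+2}))$. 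The companion identity for $\mcM_2(m_2(p_{2k+3}))$ is then obtained the same way: expand $m_2(p_{2k+3})$ via $p_{2k+3}=(m_1(p_{2k+2}),m_2(p_{2k+1}))$, apply $\mcM_2$, and combine the just-proved identity at index $2k+2$ with the second half of the inductive hypothesis to match $m_2(P_{2k+4})$.

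Identity (2) is proved by the mirror argument: $\mcM_1$ is the substitution $x_1\mapsto m_1(x_1,x_2)$, still a ring homomorphism, and the recursive definitions of $P_i$ in \Cref{PP} have exactly the same shape as those of $p_i$ with the roles of $m_1,m_2$ (and of $P_{-1},P_{-2}$ versus $p_{-2},p_{-1}$) interchanged. I do not expect any real obstacle here: the computation is forced by the homomorphism property once the recursions are written down. The only delicate point is bookkeeping, since the index offsets in the recursions for $p_i$ and $P_i$ are not identical and one must be careful to line up $p_{2k},p_{2k+1}$ with $P_{2k+1},P_{2k+2}$ (respectively $P_{2k+1},P_{2k}$ with $p_{2k+2},p_{2k+1}$) so that the inductive hypothesis can be invoked at the correct indices.
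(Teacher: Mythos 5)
Your proposal is correct and follows essentially the same route as the paper: both prove the two identities of (1) simultaneously by induction, push $\mcM_2$ through the recursions for $p_i$ and $P_i$ using the fact that it is a substitution homomorphism, and dispatch (2) by symmetry. The index bookkeeping in your inductive step matches the paper's exactly.
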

\begin{proof}
	Without loss of generality, we only focus on the first two identites since the second ones are similar. We take the double induction on $k$. 
	When $k=0$, we have
\[
\begin{array}{l}
\mcM_2(m_1(p_{0}))=m_1(x_1,m_2)=m_1(P_{1}), \\
\mcM_2(m_2(p_1))=m_2(m_1(x_1,m_2),m_2)=m_2(P_2).	
\end{array}
\]
Assume that the two identities hold for $k=n$, that is 
	$$\mcM_2(m_1(p_{2n}))=m_1(P_{2n+1}),\ \mcM_2(m_2(p_{2n+1}))=m_2(P_{2n+2}).$$ 
	When $k=n+1$, by \Cref{pp} and \Cref{PP}, we have 
	\begin{align}
		\mcM_2(m_1(p_{2n+2}))=
		&{}\mcM_2(m_1(m_1(p_{2n}),m_2(p_{2n+1}))) \notag \\
		=&{} m_1(\mcM_2(m_1(p_{2n})),\mcM_2(m_2(p_{2n+1}))) \notag \\
		=&{} m_1(m_1(P_{2n+1}),m_2(P_{2n+2})) \notag \\
		=&{}m_1(P_{2n+3}), \notag
	\\\mcM_2(m_2(p_{2n+3}))=
		&{}\mcM_2(m_2(m_1(p_{2n+2}),m_2(p_{2n+1}))) \notag \\=&{} m_2(\mcM_2(m_1(p_{2n+2})),\mcM_2(m_2(p_{2n+1}))) \notag \\
		=&{}m_2(m_1(P_{2n+3}),m_2(P_{2n+2}))\notag \\
		=&{}m_2(P_{2n+4}). \notag
	\end{align} 
	Hence, the lemma holds for any $k\in \mbN$.
\end{proof}
Now, there are important relations between M-actions $\widetilde{\mcM_i}$ and mutations $\mu_j$.
\begin{proposition}\label{cluster mutations are equivalent to M-actions rank 2 case}
	For any $k \in \mbN$, the following two identities hold. 
	\begin{enumerate}
		\item $\mu_i(\mu_j\mu_i)^k(x_1,x_2)=\widetilde{\mcM_i}(\widetilde{\mcM_j}\widetilde{\mcM_i})^k(x_1,x_2)$, where $(i,j)=(1,2), (2,1)$.
		\item $(\mu_i\mu_j)^k(x_1,x_2)=(\widetilde{\mcM_j}\widetilde{\mcM_i})^k(x_1,x_2)$, where $(i,j)=(1,2), (2,1)$.
	\end{enumerate}
\end{proposition}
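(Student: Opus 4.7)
The plan is to translate both identities into statements about the sequences $p_i$ and $P_i$ of \Cref{pp} and \Cref{PP}. Combined with \Cref{little p} and \Cref{big P}, the proposition becomes equivalent to the four identities
\begin{align*}
p_{2k} &= (\widetilde{\mcM_1}\widetilde{\mcM_2})^k(x_1,x_2), & P_{2k} &= (\widetilde{\mcM_2}\widetilde{\mcM_1})^k(x_1,x_2),\\
p_{2k+1} &= \widetilde{\mcM_1}(\widetilde{\mcM_2}\widetilde{\mcM_1})^k(x_1,x_2), & P_{2k+1} &= \widetilde{\mcM_2}(\widetilde{\mcM_1}\widetilde{\mcM_2})^k(x_1,x_2),
\end{align*}
which I would establish simultaneously by induction on $k$.

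The engine of the induction is the following set of \emph{intertwining identities}:
\begin{align*}
\widetilde{\mcM_2}(p_{2k}) &= P_{2k+1}, & \widetilde{\mcM_1}(P_{2k+1}) &= p_{2k+2},\\
\widetilde{\mcM_1}(P_{2k}) &= p_{2k+1}, & \widetilde{\mcM_2}(p_{2k+1}) &= P_{2k+2}.
\end{align*}
Each is a short calculation: unfold, say, $p_{2k}=(m_1(p_{2k-2}),m_2(p_{2k-1}))$ using \Cref{pp}; observe that $\widetilde{\mcM_2}$ acts componentwise by $\mcM_2$; and invoke \Cref{m p} (with the index shift $k\to k-1$) to rewrite $\mcM_2(m_1(p_{2k-2}))$ and $\mcM_2(m_2(p_{2k-1}))$ as $m_1(P_{2k-1})$ and $m_2(P_{2k})$ respectively, which by \Cref{PP} assemble into $P_{2k+1}$. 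The other three intertwining identities are handled symmetrically.

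With these in hand, the inductive step is immediate. For example, $(\widetilde{\mcM_1}\widetilde{\mcM_2})^{k+1}(x_1,x_2)=\widetilde{\mcM_1}\widetilde{\mcM_2}(p_{2k})=\widetilde{\mcM_1}(P_{2k+1})=p_{2k+2}$ advances the first identity from $k$ to $k+1$; the other three propagate in the same way, with each application of $\widetilde{\mcM_i}$ toggling between a $p$-sequence and a $P$-sequence. The main bookkeeping obstacle is the boundary $k=0$: the shifted invocation of \Cref{m p} reaches the artificial values $m_1(p_{-2})=x_1$, $m_2(p_{-1})=x_2$, $m_1(P_{-1})=x_1$, $m_2(P_{-2})=x_2$ supplied by \Cref{pp} and \Cref{PP}, but those conventions are chosen precisely so that the intertwining identities (and hence the base cases of the induction) hold there as well. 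Once each base case is verified directly from \Cref{M}, no further nontrivial calculation is required.
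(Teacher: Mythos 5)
Your proof is correct and follows essentially the same route as the paper: the paper likewise reduces the statement to the sequences $p_i$ and $P_i$ via \Cref{little p} and \Cref{big P} and runs an induction on $k$ whose inductive step is exactly your intertwining identities, computed inline from \Cref{M-actions} and \Cref{m p}. Your only (harmless) reorganization is to isolate the intertwining identities as a named intermediate step and to treat all four cases at once instead of invoking symmetry for $(i,j)=(2,1)$.
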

\begin{proof}
	Without loss of generality, we only focus on the two identities when $(i,j)=(1,2)$ since the other case is similar. We take the double induction on $k$.
	When $k=0$, it is clear that
\[
\begin{array}{l}
(\widetilde{\mcM_2}\widetilde{\mcM_1})^0(x_1,x_2)=(x_1,x_2)=(\mu_1\mu_2)^0(x_1,x_2),\\
\widetilde{\mcM_1}(\widetilde{\mcM_2}\widetilde{\mcM_1})^0(x_1,x_2)=(m_1,x_2)=\mu_1(\mu_1\mu_2)^0(x_1,x_2).	
\end{array}
\] 
Assume that the two identities hold for $k=n$. According to \Cref{little p} and \Cref{big P}, we have
\[
\begin{array}{l}
(\widetilde{\mcM_2}\widetilde{\mcM_1})^n(x_1,x_2)=(m_1(P_{2n-1}),m_2(P_{2n-2})),\\
\widetilde{\mcM_1}(\widetilde{\mcM_2}\widetilde{\mcM_1})^n(x_1,x_2)=(m_1(p_{2n}),m_2(p_{2n-1})).	
\end{array}
\]	 
When $k=n+1$, by \Cref{M-actions} and \Cref{m p}, we get  
	\begin{align}
		(\widetilde{\mcM_2}\widetilde{\mcM_1})^{n+1}(x_1,x_2)=
		&{}\widetilde{\mcM_2}(\widetilde{\mcM_1}(\widetilde{\mcM_2}\widetilde{\mcM_1})^n(x_1,x_2)) \notag \\
		=&{}\widetilde{\mcM_2}(m_1(p_{2n}),m_2(p_{2n-1})) \notag \\
		=&{}(m_1(P_{2n+1}),m_2(P_{2n})) \notag \\
		=&{}(\mu_1\mu_2)^{n+1}(x_1,x_2), \notag
	\end{align}
	and
	\begin{align}
		\widetilde{\mcM_1}(\widetilde{\mcM_2}\widetilde{\mcM_1})^{n+1}(x_1,x_2)=
		&{}\widetilde{\mcM_1}(m_1(P_{2n+1}),m_2(P_{2n})) \notag \\
		=&{}(m_1(p_{2n+2}),m_2(p_{2n+1})) \notag \\
		=&{}\mu_1(\mu_2\mu_1)^{n+1}(x_1,x_2). \notag
	\end{align}
	Hence, the proposition holds for any $k\in \mbN$.
\end{proof}
Since $\widetilde{\mcM_i}^2=\id$, a direct corollary of \Cref{cluster mutations are equivalent to M-actions rank 2 case} is as follows.
\begin{corollary}\label{M action is stable on seed set rank 2 case}
	The M-actions $\widetilde{\mcM_1}$ and $\widetilde{\mcM_2}$ are bijections over the set of all clusters of the cluster algebra $\mathcal{A}$ of rank 2.
\end{corollary}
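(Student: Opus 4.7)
The plan is to leverage Proposition~\ref{cluster mutations are equivalent to M-actions rank 2 case}, which rewrites every cluster of $\mcA$ as an alternating composition of $\widetilde{\mcM_1}$ and $\widetilde{\mcM_2}$ applied to the initial cluster $(x_1,x_2)$. Together with the case $k=0$ and the remark immediately preceding the proposition (which enumerates the rank~$2$ clusters as $(\mu_j\mu_i)^k(x_1,x_2)$ and $\mu_i(\mu_j\mu_i)^k(x_1,x_2)$), the two displayed identities in the proposition say that the set of all clusters equals the orbit
\[
\mathcal{O}=\bigl\{\widetilde{\mcM}_{i_1}\widetilde{\mcM}_{i_2}\cdots\widetilde{\mcM}_{i_r}(x_1,x_2) : r\geq 0,\ i_\ell\in\{1,2\},\ i_\ell\neq i_{\ell+1}\bigr\},
\]
i.e.\ the set of evaluations of alternating words in $\widetilde{\mcM_1},\widetilde{\mcM_2}$ at $(x_1,x_2)$.

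The first step is to check that each $\widetilde{\mcM_\ell}$ sends $\mathcal{O}$ into itself. Given a cluster represented as $\widetilde{\mcM}_{i_1}\cdots\widetilde{\mcM}_{i_r}(x_1,x_2)\in\mathcal{O}$, premultiplying by $\widetilde{\mcM_\ell}$ yields either a longer alternating word (when $\ell\neq i_1$) or, via the involutivity $\widetilde{\mcM_{\ell}}^{2}=\id$, a shorter alternating word (when $\ell=i_1$, since the two leading factors cancel). In either situation the resulting expression is again an alternating word applied to $(x_1,x_2)$, hence lies in $\mathcal{O}$.

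The second step is bijectivity. The map $\widetilde{\mcM_\ell}$ is an involution on $\mbQ(x_1,x_2)\times\mbQ(x_1,x_2)$ directly from \Cref{M}. Since the previous step shows that both $\widetilde{\mcM_\ell}$ and its inverse (itself) map $\mathcal{O}$ into $\mathcal{O}$, the restriction $\widetilde{\mcM_\ell}|_{\mathcal{O}}$ has a two-sided inverse on $\mathcal{O}$ and is therefore a bijection.

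I do not expect a real obstacle: the whole content is already packaged into \Cref{cluster mutations are equivalent to M-actions rank 2 case}, and the remaining argument is the easy combinatorial observation that pre-multiplying an alternating word by a single letter produces another alternating word. The only care needed is to cover the boundary case $r=0$ (corresponding to the initial cluster) and to confirm that the two families $(\widetilde{\mcM_j}\widetilde{\mcM_i})^k(x_1,x_2)$ and $\widetilde{\mcM_i}(\widetilde{\mcM_j}\widetilde{\mcM_i})^k(x_1,x_2)$ together exhaust $\mathcal{O}$, which is transparent from the structure of the $2$-regular tree $\mbT_2$.
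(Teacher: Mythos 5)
Your argument is correct and is essentially the paper's own reasoning: the paper derives the corollary in one line from Proposition~\ref{cluster mutations are equivalent to M-actions rank 2 case} together with the involutivity $\widetilde{\mcM_i}^{\,2}=\id$, which is exactly the content you spell out via the alternating-word description of the cluster set. Your extra care with the word-length bookkeeping and the $r=0$ case is a fuller write-up of the same proof, not a different route.
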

Moreover, according to $\mcM_i^2=\id$ and \Cref{M action is stable on seed set rank 2 case}, we have the following corollary.
\begin{corollary}\label{cor2}
	The maps $\mcM_1$ and $\mcM_2$ are bijections over the set of all cluster variables of the cluster algebra $\mathcal{A}$ of rank 2.
\end{corollary}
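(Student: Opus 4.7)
The plan is to deduce the statement directly from \Cref{M action is stable on seed set rank 2 case} together with the fact that $\mcM_i^2 = \id$, without any further computation with mutation rules. The bridge is the observation (immediate from \Cref{M}) that for any pair $(f,g) \in \mbQ(x_1,x_2)^{\times 2}$ one has $\widetilde{\mcM_i}(f,g) = (\mcM_i(f), \mcM_i(g))$, i.e.\ $\widetilde{\mcM_i}$ is simply the componentwise application of $\mcM_i$.

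First, I would show that $\mcM_i$ maps the set of cluster variables of $\mcA$ into itself. Let $x$ be any cluster variable; then $x$ appears as one of the two entries of some cluster $\mfx_t = (c_{1;t}(x_1,x_2), c_{2;t}(x_1,x_2))$. By the componentwise description of $\widetilde{\mcM_i}$,
\[
\widetilde{\mcM_i}(\mfx_t) = \bigl(\mcM_i(c_{1;t}(x_1,x_2)),\, \mcM_i(c_{2;t}(x_1,x_2))\bigr).
\]
By \Cref{M action is stable on seed set rank 2 case}, this is again a cluster of $\mcA$, so both of its entries are cluster variables. In particular, $\mcM_i(x)$ is a cluster variable.

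Second, to conclude bijectivity on the set of cluster variables, I would use that $\mcM_i$ is an involution of $\mbQ(x_1,x_2)$ (noted right after \Cref{M}). Injectivity of the restriction of $\mcM_i$ to cluster variables is therefore automatic. For surjectivity: given any cluster variable $y$, the previous step guarantees that $\mcM_i(y)$ is itself a cluster variable, and the involution property gives $\mcM_i(\mcM_i(y)) = y$, exhibiting $y$ as the image of $\mcM_i(y)$ under $\mcM_i$.

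I do not expect any real obstacle: the conceptual work has already been done in \Cref{cluster mutations are equivalent to M-actions rank 2 case} and \Cref{M action is stable on seed set rank 2 case}, and what remains is just the passage from pairs (clusters) to individual entries (cluster variables), which is transparent because $\widetilde{\mcM_i}$ acts coordinatewise as $\mcM_i$.
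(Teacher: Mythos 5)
Your proposal is correct and follows exactly the route the paper intends: the paper states this corollary with no written proof beyond citing $\mcM_i^2=\id$ and \Cref{M action is stable on seed set rank 2 case}, and your argument (componentwise action of $\widetilde{\mcM_i}$ to get stability of the set of cluster variables, then the involution property for bijectivity) is precisely that reasoning spelled out.
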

\begin{remark}
	Note that \Cref{cluster mutations are equivalent to M-actions rank 2 case}, \Cref{M action is stable on seed set rank 2 case} and \Cref{cor2} are independent of whether the cluster algebra is of finite type.
\end{remark}
\begin{question}
	When the cluster algebra with IMR condition is of rank $n\ (n\geq 3)$, one can similarly define M-actions $\widetilde{\mcM_i},\ i=1,\dots,n$. What is the relation between them and mutations $\mu_j$? 
\end{question}
In the following, we aim to prove our main theorem.
\begin{theorem}
	\label{M action provides mutation invariants rank 2} 
	Let $\mcA$  be of finite type with $m$ clusters 
	$(c_{1;i}(x_1,x_2),c_{2;i}(x_1,x_2))^m_{i=1}.$
	Then a non-constant rational function $\mcT(x_1,x_2)$ is a mutation invariant of $\mcA$ if and only if there exist a symmetric polynomial $\Phi(X_1,\cdots,X_m)$ of $m$ variables over $\mbQ$ and a rational function $F(X_1,X_2)$, such that 
	\begin{align}\label{main formula}
		\mcT(x_1,x_2)=&{}\Phi(F(c_{1;1}(x_1,x_2),c_{2;1}(x_1,x_2)),\cdots,F(c_{1;m}(x_1,x_2),c_{2;m}(x_1,x_2))).
	\end{align} 
\end{theorem}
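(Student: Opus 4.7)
The plan is to prove the two directions of the \emph{if and only if} separately, with the ``if'' direction being the substantive one.

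For the \emph{only if} direction, the argument is essentially tautological. If $\mcT$ is a mutation invariant then, by \Cref{mutation invariant}, for each $j\in\{1,\dots,m\}$ we have $\mcT(c_{1;j}(x_1,x_2),c_{2;j}(x_1,x_2))=\mcT(x_1,x_2)$, since $(c_{1;j},c_{2;j})$ is itself a cluster of $\mcA$. Averaging these $m$ identities gives
$$\mcT(x_1,x_2)=\frac{1}{m}\sum_{j=1}^{m}\mcT(c_{1;j}(x_1,x_2),c_{2;j}(x_1,x_2)),$$
so we may take $F(X_1,X_2):=\mcT(X_1,X_2)\in\mbQ(X_1,X_2)$ together with the symmetric polynomial $\Phi(X_1,\dots,X_m):=\tfrac{1}{m}(X_1+\cdots+X_m)\in\mbQ[X_1,\dots,X_m]$, which is the required form.

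For the \emph{if} direction, suppose $\mcT$ has the stated form. Since every rank $2$ cluster algebra satisfies the IMR condition, \Cref{equivalent} reduces the problem to verifying $\mcT(\mu_k(x_1,x_2))=\mcT(x_1,x_2)$ for $k=1,2$. By \Cref{M}, the tuple $\mu_k(x_1,x_2)$ coincides with $\widetilde{\mcM_k}(x_1,x_2)$, and a straightforward application of the substitution rule from \Cref{M-actions} yields
$$\mcT(\mu_k(x_1,x_2))=\Phi\bigl(F(\widetilde{\mcM_k}(c_{1;1},c_{2;1})),\dots,F(\widetilde{\mcM_k}(c_{1;m},c_{2;m}))\bigr).$$
Now \Cref{M action is stable on seed set rank 2 case} says that $\widetilde{\mcM_k}$ permutes the (finite) set of $m$ clusters of $\mcA$, so there exists a permutation $\sigma$ of $\{1,\dots,m\}$ (depending on $k$) with $\widetilde{\mcM_k}(c_{1;j},c_{2;j})=(c_{1;\sigma(j)},c_{2;\sigma(j)})$ for every $j$. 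The symmetry of $\Phi$ then collapses the displayed expression back to $\mcT(x_1,x_2)$, completing the verification.

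The only mildly delicate point is the bookkeeping in the ``if'' direction: one must recognise that evaluating $\mcT$ at the mutated tuple coincides with applying $\widetilde{\mcM_k}$ componentwise inside each $F$-slot, and that the finite-type hypothesis is precisely what allows us to encode the cluster permutation via a finite-variable symmetric polynomial. Once \Cref{M-actions} and \Cref{M action is stable on seed set rank 2 case} are in hand, the proof reduces to an unpacking of definitions together with a single appeal to the symmetry of $\Phi$.
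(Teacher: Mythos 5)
Your proposal is correct and follows essentially the same route as the paper: the "only if" direction via averaging with $\Phi=\tfrac1m(X_1+\cdots+X_m)$ and $F=\mcT$, and the "if" direction by reducing to invariance under a single $\widetilde{\mcM_k}$ via \Cref{equivalent}, pushing the substitution inside each $F$-slot, and invoking \Cref{M action is stable on seed set rank 2 case} together with the symmetry of $\Phi$. No gaps.
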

\begin{proof}
We begin with proving the sufficient condition by assuming that the equality \eqref{main formula} holds. Firstly, according to \Cref{equivalent} and \Cref{cluster mutations are equivalent to M-actions rank 2 case}, we only need to check that 
\begin{align}\mcT(x_1,x_2)=\mcT(\widetilde{\mcM_i}(x_1,x_2)),\ i=1,2. \notag
\end{align}  
Furthermore, by \Cref{M action is stable on seed set rank 2 case} and the property of symmetric polynomial $F$, we get the equalities as follows
\begin{equation}\label{M action for mutation invariants}
\begin{array}{l}
 \ \ \ \mcT(\widetilde{\mcM_i}(x_1,x_2))\\
=\Phi(F(c_{1;1}(\widetilde{\mcM_i}(x_1,x_2)),c_{2;1}(\widetilde{\mcM_i}(x_1,x_2))),\cdots,F(c_{1;m}(\widetilde{\mcM_i}(x_1,x_2)),c_{2;m}(\widetilde{\mcM_i}(x_1,x_2))))\\
=\Phi(F(\widetilde{\mcM_i}(c_{1;1}(x_1,x_2),c_{2;1}(x_1,x_2))),\cdots,F(\widetilde{\mcM_i}(c_{1;m}(x_1,x_2),c_{2;m}(x_1,x_2))))\\
=\Phi(F(c_{1;\sigma_i(1)}(x_1,x_2),c_{2;\sigma_i(1)}(x_1,x_2)),\cdots,F(c_{1; \sigma_i(m)}(x_1,x_2),c_{2;\sigma_i(m)}(x_1,x_2)))\\
=\Phi(F(c_{1;1}(x_1,x_2),c_{2;1}(x_1,x_2)),\cdots,F(c_{1;m}(x_1,x_2),c_{2;m}(x_1,x_2))) \\
=\mcT(x_1,x_2),
	\end{array} \notag
\end{equation}
where $i=1,2$ and $\sigma_i\in S_m$ is a permutation induced by $\widetilde{\mcM_i}$ on the index set $\{1,\cdots,m\}$ of all clusters of $\mcA$.

	Then, we prove the necessary condition and assume that $\mcT(X_1,X_2)$ is a mutation invariant of $\mcA$. Take the symmetric polynomial $$\Phi(X_1,\cdots,X_m)=\dfrac{X_1+\dots+X_m}{m}$$ and the rational function $F(X_1,X_2)=\mcT(X_1,X_2)$. Since $\mcT(X_1,X_2)$ is a mutation invariant, we get
	\begin{align}
		&{}\Phi(F(c_{1;1}(x_1,x_2),c_{2;1}(x_1,x_2)),\cdots,F(c_{1;m}(x_1,x_2),c_{2;m}(x_1,x_2)))\notag \\=&{} \Phi(\mcT(c_{1;1}(x_1,x_2),c_{2;1}(x_1,x_2)),\cdots,\mcT(c_{1;m}(x_1,x_2),c_{2;m}(x_1,x_2)))\notag \\=&{} \Phi(\mcT(x_1,x_2),\cdots,\mcT(x_1,x_2))) \notag \\=&{} \mcT(x_1,x_2).\notag
	\end{align}
\end{proof}
\begin{remark}
	In \Cref{M action provides mutation invariants rank 2}, if $F(X_1,X_2)$ is a polynomial, $\mcT(x_1,x_2)$ is a Laurent mutation invariant, that is $\mcT(x_1,x_2)\in \mathbb{Q}[x_1^{\pm1},x_2^{\pm1}].$
	
\end{remark}

\subsection{Mutation invariants of finite type}\

Fomin and Zelevinsky first classified all the finite type cluster algebras in \cite{FZ2}. In particular, the case of rank 2 is the cluster algebra with the initial exchange matrix
$$B=\begin{pmatrix}0 & m\\ -n & 0\end{pmatrix},\, 0\leq mn\leq 3,$$ 
see \cite[Theorem 1.8]{FZ2} and \cite[Theorem 5.1.1]{FWZ}. More precisely, $\mcA$ is of finite type if $mn\leq 3$, affine type if $mn=4$, and non-affine type if $mn\geq 5$, cf. \cite{N}.

\begin{example}[the $A_1 \times A_1$ type]\label{double A1} Let $m=n=0$. Then the mutation rules are
	$$\begin{array}{c} \mu_1(x_1,x_2)=(\dfrac{2}{x_1},x_2),\ \mu_2(x_1,x_2)=(x_1,\dfrac{2}{x_2}).\end{array}$$	
Hence, the clusters are 4-periodic and they are as follows
$$(x_1,x_2),\ (\frac{2}{x_1},x_2),\ (\frac{2}{x_1},\frac{2}{x_2}),\ (x_1,\frac{2}{x_2}).$$
Since the exchange matrices are all zero matrices (usually called \emph{isolated}), the characterization of Laurent mutation invariants of $A_1 \times A_1$ type is explicit.

\begin{lemma}\label{poly}
	Let $f(x)$ be a non-constant Laurent polynomial over $\mbQ$. If $f(x)=f(\dfrac{2}{x})$, there exists a polynomial $g(X)\in \mathbb{Q}[X]$ such that $f(x)=g(x+\dfrac{2}{x})$.
\end{lemma}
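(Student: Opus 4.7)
The plan is to treat this as a question about the invariant subring of $\mbQ[x,x^{-1}]$ under the involution $\sigma\colon x\mapsto 2/x$, and to show it is generated as a $\mbQ$-algebra by $X:=x+\tfrac{2}{x}$. Writing $f(x)=\sum_{k\in\mbZ}a_k x^k$ as a finite Laurent sum, the equation $f(x)=f(2/x)$ expands to
\begin{equation*}
\sum_{k\in\mbZ}a_k x^k \;=\; \sum_{k\in\mbZ}a_k 2^k x^{-k},
\end{equation*}
and comparing coefficients gives the symmetry relation $a_{-k}=2^k a_k$ for every $k\in\mbZ$. In particular $a_k\neq 0$ iff $a_{-k}\neq 0$, so the support of $f$ is symmetric about $0$.

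Next I would induct on the width $N:=\max\{|k|:a_k\neq 0\}$. The case $N=0$ is trivial ($f$ is a constant, take $g$ to be the same constant). For the inductive step, expand
\begin{equation*}
X^N=\Bigl(x+\tfrac{2}{x}\Bigr)^N=\sum_{k=0}^{N}\binom{N}{k}2^k x^{N-2k}.
\end{equation*}
The coefficient of $x^N$ in $X^N$ equals $1$ and the coefficient of $x^{-N}$ equals $2^N$. Subtracting $a_N X^N$ from $f$ kills the $x^N$-term; and thanks to the invariance relation $a_{-N}=2^N a_N$ it \emph{simultaneously} kills the $x^{-N}$-term. The Laurent polynomial $f-a_N X^N$ is still $\sigma$-invariant (since both $f$ and $X$ are, and any polynomial combination of $\sigma$-invariants is $\sigma$-invariant), and its width is strictly less than $N$. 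The induction hypothesis then yields $h\in\mbQ[Y]$ with $f-a_N X^N=h(X)$, so $g(Y):=h(Y)+a_N Y^N$ satisfies $f(x)=g(X)$.

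The whole argument turns on the compatibility between the extremal binomial coefficients $1$ and $2^N$ in the expansion of $X^N$ and the symmetry $a_{-N}=2^N a_N$ forced by $\sigma$-invariance; this is precisely what allows a single subtraction to clear \emph{both} ends of the Laurent support, driving the induction. I do not expect any real obstacle beyond recording this compatibility; note also that the constant $2$ is inessential and the same proof goes through for any replacement $x\mapsto c/x$ with $c\in\mbQ^{*}$.
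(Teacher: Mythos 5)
Your proof is correct, and it follows the same basic strategy as the paper's --- an induction that strips off the extremal terms of the Laurent support --- but with a cleaner decomposition at each step. The paper writes $f(x)=(a_0+\dots+a_mx^m)/x^k$ with $a_0\neq 0$, deduces only the extremal relation $m=2k$, $a_0=2^ka_{2k}$ by comparing top degrees, and then subtracts the ``power sum'' $a_{2k}\bigl(x^k+(2/x)^k\bigr)$; this forces it to invoke a separate sub-induction showing that $x^k+(2/x)^k$ is itself a polynomial in $x+\tfrac{2}{x}$, and to handle the leftover middle terms by ``repeating the process.'' You instead record the full coefficient symmetry $a_{-k}=2^ka_k$ and subtract $a_N\bigl(x+\tfrac{2}{x}\bigr)^N$ directly, observing that the two extremal coefficients $1$ and $2^N$ of this power match exactly the ratio forced by invariance, so a single subtraction clears both ends and strictly reduces the width; the whole argument is then one induction with no auxiliary lemma. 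Your version also makes transparent that the constant $2$ plays no special role. Both proofs are valid; yours buys a more uniform induction and avoids the implicit Newton-type identity, at the modest cost of writing out the full coefficient comparison rather than just the leading one.
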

\begin{proof}
	Assume that $$f(x)=\dfrac{a_0+a_1x+\dots+a_mx^m}{x^k},$$ where $a_0\neq 0$. Since $f(x)=f(\dfrac{2}{x})$, we can directly get $k\geq 1$, $m\geq1$ and $a_m\neq 0$.
 Furthermore, it follows that $$2^kx^m(a_0+a_1x+\dots+a_mx^m)=x^{2k}(a_0x^m+2a_1x^{m-1}+\dots+2^ma_m).$$
	Comparing the highest degree of $x$ on both sides, we get $m=2k$ and $2^ka_{2k}=a_0$. Then $$f(x)=f_1(x)+\dfrac{a_1x+\dots+a_{2k-1}x^{2k-1}}{x^k},$$ where $f_1(x)=a_{2k}[x^k+(\dfrac{2}{x})^k]$. By induction, one can directly verify that there exists $F_1(X)\in \mathbb{Z}[X]$ such that  $f_1(x)=a_{2k}F_1(x+\dfrac{2}{x})$. Notice that $f(x)-f_1(x)$ is either a constant or a mutation invariant which can be simplified as $$\dfrac{a_1x+\dots+a_{2k-1}x^{2k-1}}{x^k}=\dfrac{a_s+\dots+a_{l}x^{l-s}}{x^{k-s}},$$ where $1\leq s<k$, $s<l\leq 2k-1<m$, $a_s\neq 0$ and $a_l\neq 0$. Since $l-s<m$, by repeating the process above, there exists a polynomial $g_1(X)\in \mathbb{Q}[X]$ such that $$\dfrac{a_s+\dots+a_{l}x^{l-s}}{x^{k-s}}=g_1(x+\dfrac{2}{x}).$$ Therefore, we get $g(X)=a_{2k}F_1(X)+g_1(X)\in \mathbb{Q}[X]$ and $f(x)=g(x+\dfrac{2}{x})$.
\end{proof}

\begin{proposition}\label{prop A1}
	A non-constant rational function $\mcT(x_1,x_2)$ is a Laurent mutation invariant of $A_1\times A_1$ type if and only if there exists a polynomial $G(X_1,X_2)\in \mathbb{Q}[X_1,X_2]$ such that $\mcT(x_1,x_2)=G(x_1+\dfrac{2}{x_1},x_2+\dfrac{2}{x_2}).$
\end{proposition}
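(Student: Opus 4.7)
The plan is to separate invariance under $\mu_1$ and $\mu_2$ and handle them one variable at a time, leveraging \Cref{poly} for each single-variable slice. The sufficient direction is immediate: since $\mu_1$ fixes $x_1 + 2/x_1$ and $\mu_2$ fixes $x_2 + 2/x_2$, any $G(x_1 + 2/x_1, x_2 + 2/x_2)$ with $G \in \mathbb{Q}[X_1,X_2]$ is automatically a Laurent mutation invariant, and it is non-constant whenever it represents a non-constant $\mcT$.

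For the necessary direction I would expand $\mcT \in \mathbb{Q}[x_1^{\pm 1}, x_2^{\pm 1}]$ as a finite Laurent polynomial in $x_2$,
\[
\mcT(x_1,x_2) = \sum_{j \in \mathbb{Z}} f_j(x_1)\, x_2^j, \qquad f_j \in \mathbb{Q}[x_1^{\pm 1}].
\]
Invariance under $\mu_1$ forces $f_j(x_1) = f_j(2/x_1)$ for every $j$; \Cref{poly} (trivially so when $f_j$ is constant or zero) then yields $g_j(X) \in \mathbb{Q}[X]$ with $f_j(x_1) = g_j(x_1 + 2/x_1)$. Invariance under $\mu_2$, combined with comparison of coefficients of $x_2^j$ in $\mcT(x_1,x_2) = \mcT(x_1, 2/x_2)$, gives $f_{-j} = 2^j f_j$, hence $g_{-j} = 2^j g_j$. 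Pairing $j$ with $-j$ then produces
\[
\mcT = g_0(x_1 + 2/x_1) + \sum_{j \geq 1} g_j(x_1 + 2/x_1)\bigl( x_2^j + (2/x_2)^j \bigr).
\]

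The remaining key step is to show that $y_j := x_2^j + (2/x_2)^j$ lies in $\mathbb{Z}[Y]$, where $Y := x_2 + 2/x_2$. I would establish the recurrence
\[
y_{j+1} = Y \cdot y_j - 2\, y_{j-1}, \qquad y_0 = 2,\quad y_1 = Y,
\]
by a direct expansion of $(x_2 + 2/x_2)(x_2^j + 2^j x_2^{-j})$, and conclude by induction that $y_j = p_j(Y) \in \mathbb{Z}[Y]$. Substituting back then gives the desired representation
\[
G(X_1, X_2) = g_0(X_1) + \sum_{j \geq 1} g_j(X_1)\, p_j(X_2) \in \mathbb{Q}[X_1, X_2].
\]

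The main obstacle is this Chebyshev-like step: the classical Chebyshev recurrence $T_{j+1} = 2X T_j - T_{j-1}$ is twisted here by the factor of $2$ coming from the $A_1 \times A_1$ mutation rule, so one must verify the modified recurrence by hand rather than invoke a standard identity. Once that computation is secured, everything else is bookkeeping: reducing the two-variable problem to two applications of the one-variable \Cref{poly} and combining them via the coefficient-matching identity $g_{-j} = 2^j g_j$.
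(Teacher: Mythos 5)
Your proof is correct, but it takes a genuinely different route from the paper's. The paper slices in the other order: it fixes $x_2$, applies \Cref{poly} to $\mcT_{x_2}(x_1)$ over the coefficient field $\mbQ(x_2)$ to get $\mcT=\sum_i A_i(x_2)\,(x_1+2/x_1)^i$, and then recovers each $A_i(x_2)$ as a $\mbQ$-linear combination of the values $\mcT(2i+1,x_2)$ via a Vandermonde interpolation at $x_1=1,3,\dots,2m+1$; since each $\mcT(2i+1,x_2)$ is invariant under $x_2\mapsto 2/x_2$, a second application of \Cref{poly} finishes. You instead expand in $x_2$, match coefficients to get $f_j(x_1)=f_j(2/x_1)$ and $f_{-j}=2^jf_j$, and handle the $x_2$-dependence by the twisted Chebyshev recurrence $y_{j+1}=Y y_j-2y_{j-1}$ showing $x_2^j+(2/x_2)^j\in\mbZ[x_2+2/x_2]$. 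Your version buys two things: it only invokes \Cref{poly} in the single-variable setting over $\mbQ$ exactly as stated (the paper implicitly uses it over the extension field $\mbQ(x_2)$, which is harmless but unremarked), and it replaces the interpolation trick by an explicit algebraic identity; the cost is the by-hand verification of the modified recurrence, which you correctly flag and which checks out, since $(x_2+2/x_2)(x_2^j+2^jx_2^{-j})=y_{j+1}+2y_{j-1}$. Both arguments are complete; the only pedantic point to record in your write-up is that \Cref{poly} is stated for non-constant $f$, so the constant/zero coefficients $f_j$ must be dispatched separately, which you already note.
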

\begin{proof}
	We begin with proving the sufficient condition and it is direct that $\mcT(x_1,x_2)=G(x_1+\dfrac{2}{x_1},x_2+\dfrac{2}{x_2})$ is a mutation invariant of $\mcA$ by calculation.
	
	Then, we prove the necessary condition and assume that $\mcT(x_1,x_2)$ is a Laurent invariant of $A_1\times A_1$ type. Fix $x_2$ and denote $\mcT(x_1,x_2)$ by $\mcT_{x_2}(x_1)\in \mbQ(x_2)(x_1)$. Notice that$$\mcT_{x_2}(x_1)=\mcT_{x_2}(\dfrac{2}{x_1}).$$ By \Cref{poly}, there exists a polynomial over the field $\mbQ(x_2)$, $$g_{x_2}(X)=A_0(x_2)+A_1(x_2)X+\dots+A_m(x_2)X^m,$$ such that $\mcT_{x_2}(x_1)=g_{x_2}(x_1+\dfrac{2}{x_1})$. Hence, it follows that $$\mcT(x_1,x_2)=A_0(x_2)+A_1(x_2)(x_1+\dfrac{2}{x_1})+\dots+A_m(x_2)(x_1+\dfrac{2}{x_1})^m.$$ Take $x_1=1, 3, \dots, 2m+1$ respectively and we get the following $m+1$ equations
	\begin{equation*}
		\begin{cases}
			\mcT(1,x_2)=A_0(x_2)+3A_1(x_2)+\dots+3^mA_m(x_2),\\
			\mcT(3,x_2)=A_0(x_2)+\dfrac{11}{3}A_1(x_2)+\dots+(\dfrac{11}{3})^mA_m(x_2),\\
			\;\;\;\;\;\;\,\vdots\\
			\mcT(2m+1,x_2)=A_0(x_2)+\dfrac{4m^2+4m+3}{2m+1}A_1(x_2)+\dots+(\dfrac{4m^2+4m+3}{2m+1})^mA_m(x_2).
		\end{cases}
	\end{equation*}
	Then we have 
	$$\begin{pmatrix} \mcT(1,x_2)\\ \mcT(3,x_2)\\ \vdots\\ \mcT(2m+1,x_2)\end{pmatrix}=\begin{pmatrix} 1 & 3 & \dots & 3^m\\ 1 & \dfrac{11}{3} & \dots & (\dfrac{11}{3})^m\\ \vdots & \vdots & \ddots & \vdots\\ 1 & \dfrac{4m^2+4m+3}{2m+1} & \dots & (\dfrac{4m^2+4m+3}{2m+1})^m\end{pmatrix}\begin{pmatrix} A_0(x_2)\\ A_1(x_2)\\ \vdots\\ A_m(x_2)\end{pmatrix}.$$ 
	Notice that the matrix 
	\begin{align}M=\begin{pmatrix} 1 & 3 & \dots & 3^m\\ 1 & \dfrac{11}{3} & \dots & (\dfrac{11}{3})^m\\ \vdots & \vdots & \ddots & \vdots\\ 1 & \dfrac{4m^2+4m+3}{2m+1} & \dots & (\dfrac{4m^2+4m+3}{2m+1})^m\end{pmatrix}\notag
	\end{align} is invertible since its determinant is Vandemonde determinant, which is nonzero. 
	Hence, \begin{align}\label{Vandemonde}\begin{pmatrix} A_0(x_2)\\ A_1(x_2)\\ \vdots\\ A_m(x_2)\end{pmatrix}=M^{-1}\begin{pmatrix} \mcT(1,x_2)\\ \mcT(3,x_2)\\ \vdots\\ \mcT(2m+1,x_2)\end{pmatrix}.\end{align}
	By \Cref{poly}, there are polynomials $g_i(X_2)\in \mbQ[X_2]\ (i=0,\dots,m)$, such that $$\mcT(2i+1,x_2)=g_i(x_2+\dfrac{2}{x_2}).$$ Furthermore, according to \eqref{Vandemonde}, there are $f_i(X_2)\in \mbQ[X_2]\ (i=0,\dots,m)$, such that $$A_i(x_2)=f_i(x_2+\dfrac{2}{x_2}).$$ Take the polynomial $G(X_1,X_2) \in \mathbb{Q}[X_1,X_2]$ as  
	$$G(X_1,X_2)=f_0(X_2)X_1+f_1(X_2)X_1+\dots+f_m(X_2)X_1^m,$$ and then we get $\mcT(x_1,x_2)=G(x_1+\dfrac{2}{x_1},x_2+\dfrac{2}{x_2})$.
\end{proof}
\begin{remark}\label{rmk A1xA1}
	It is clear that $$\mcT(x_1,x_2)=x_1+\dfrac{2}{x_1}+x_2+\dfrac{2}{x_2}$$ is a mutation invariant since we can take $$G(X_1,X_2)=X_1+X_2$$ in \Cref{prop A1}. Notice that the mutation invariant can also be expressed in another way as \Cref{M action provides mutation invariants rank 2}.
	Take the symmetric polynomial $$\Phi_1(X_1,\dots,X_4)=\dfrac{1}{4}(X_1+\dots +X_{4})$$ and the rational function $$F_1(X_1,X_2)=X_1+\dfrac{2}{X_1}+X_2+\dfrac{2}{X_2}.$$
	Then we get the mutation invariant $\mcT(x_1,x_2)$. On the other hand, we can also take the symmetric polynomial $$\Phi_2(X_1,\dots,X_4)=\dfrac{1}{2}(X_1+\dots +X_{4})$$ and the rational function $$F_2(X_1,X_2)=X_1+X_2 .$$
	Then we get the mutation invariant $\mcT(x_1,x_2)$ again. Hence, if we take different $\Phi$ and $F$, we may get a common mutation invariant.
\end{remark}
\end{example}
However, the good phenomenon (\Cref{prop A1}) is not suitable for the $A_2$, $B_2$, $G_2$ type. Hence, we transfer our attention back to the important examples of mutation invariants.
\begin{example}[the $A_2$ type]\label{A2} Let $m=n=1$. Then the mutation rules are
$$\begin{array}{c}
		\mu_{1}(x_1,x_2)=(\dfrac{x_2+1}{x_1},x_2),\ \mu_{2}(x_1,x_2)=(x_1,\dfrac{x_1+1}{x_2}).
	\end{array} $$
Note that the (labeled) clusters are 10-periodic and they are as follows
$$\begin{array}{c}
(x_1,x_2),\ (\frac{x_2+1}{x_1},x_2),\ (\frac{x_2+1}{x_1},\frac{x_1+x_2+1}{x_1x_2}),\ (\frac{x_1+1}{x_2},\frac{x_1+x_2+1}{x_1x_2}),\ (\frac{x_1+1}{x_2},x_1),\\
(x_2,x_1),\ (x_2,\frac{x_2+1}{x_1}),\ (\frac{x_1+x_2+1}{x_1x_2},\frac{x_2+1}{x_1}),\ (\frac{x_1+x_2+1}{x_1x_2},\frac{x_1+1}{x_2}),\ (x_1,\frac{x_1+1}{x_2}).
\end{array}$$
According to \Cref{M action provides mutation invariants rank 2}, we can get a mutation invariant of $A_2$ type as follows.
Take the symmetric polynomial $$\Phi(X_1,\cdots,X_{10})=\dfrac{1}{2}(X_1+\dots +X_{10}),$$ and rational function $$F(X_1,X_2)=X_1.$$ Then the mutation invariant is
	\begin{align}
	\mcT(x_1,x_2)=&{}x_1+x_2+\frac{x_2+1}{x_1}+\frac{x_1+x_2+1}{x_1x_2}+\frac{x_1+1}{x_2} \notag \\
	=&{}\frac{x_1^2x_2+x_1x_2^2+x_1^2+x_2^2+2x_1+2x_2+1}{x_1x_2}.\label{A2 invariant} 
\end{align}
\end{example}
\begin{example}[the $B_2$ type]\label{B2}
	Let $m=1$, $n=2$. Then the mutation rules are 
$$	\begin{array}{c}\mu_1(x_1,x_2)=(\dfrac{x^2_2+1}{x_1},x_2),\ \mu_2(x_1,x_2)=(x_1,\dfrac{x_1+1}{x_2}).
\end{array}$$
Note that the clusters are 6-periodic and they are as follows 
	$$\begin{array}{l}
		(x_1,x_2),\ (\frac{x_2^2+1}{x_1},x_2),\ (\frac{x_2^2+1}{x_1},\frac{x^2_2+x_1+1}{x_1x_2}),
		(\frac{x^2_2+x_1^2+2x_1+1}{x_1x_2^2},\frac{x^2_2+x_1+1}{x_1x_2}),\\(\frac{x^2_2+x_1^2+2x_1+1}{x_1x_2^2},\frac{x_1+1}{x_2}),\ (x_1,\frac{x_1+1}{x_2}).
	\end{array}$$
According to \Cref{M action provides mutation invariants rank 2}, we get mutation invariants of $B_2$ type as follows. Take the symmetric polynomial $$\Phi(X_1,\cdots,X_{6})=\dfrac{1}{2}(X_1+\dots +X_{6}),$$ and the rational function $$F_1(X_1,X_2)=X_1.$$ Then the mutation invariant is
	\begin{align}
		\mcT_1(x_1,x_2)=&{}x_1+\frac{x_2^2+1}{x_1}+\frac{x^2_2+x_1^2+2x_1+1}{x_1x_2^2} \notag \\
		=&{}\frac{x_1^2x_2^2+x_2^4+2x_2^2+x_1^2+2x_1+1}{x_1x_2^2}.\label{B2 invariant1} 
	\end{align}
Similarly, we can take another rational function $$F_2(X_1,X_2)=X_2$$ and get the mutation invariant
		\begin{align}
		\mcT_2(x_1,x_2)=&{}x_2+\frac{x_2^2+x_1+1}{x_1x_2}+\frac{x_1+1}{x_2} \notag \\
		=&{}\frac{x_1x_2^2+x_2^2+x_1^2+2x_1+1}{x_1x_2}.\label{B2 invariant2} 
	\end{align}
\end{example}
\begin{example}[the $G_2$ type]\label{G2}	
	Let $m=1$, $n=3$. Then the mutation rules are 
$$	\begin{array}{c}\mu_1(x_1,x_2)=(\dfrac{x_2^3+1}{x_1},x_2),\ \mu_2(x_1,x_2)=(x_1,\dfrac{x_1+1}{x_2}).\end{array}$$
Note that the clusters are 8-periodic and they are as follows
	$$\begin{array}{l}
		(x_1,x_2),\ (\frac{x_2^3+1}{x_1},x_2),\ (\frac{x_2^3+1}{x_1},\frac{x_2^3+x_1+1}{x_1x_2}),\ (\frac{x_2^6+3x_1x_2^3+2x_2^3+x_1^3+3x_1^2+3x_1+1}{x_1^2x_2^3},\frac{x_2^3+x_1+1}{x_1x_2}),\\(\frac{x_2^6+3x_1x_2^3+2x_2^3+x_1^3+3x_1^2+3x_1+1}{x_1^2x_2^3},\frac{x_2^3+x_1^2+2x_1+1}{x_1x_2^2}),\ (\frac{x_2^3+x_1^3+3x_1^2+3x_1+1}{x_1x_2^3},\frac{x_2^3+x_1^2+2x_1+1}{x_1x_2^2}),\\ (\frac{x_2^3+x_1^3+3x_1^2+3x_1+1}{x_1x_2^3},\frac{x_1+1}{x_2}),\ (x_1,\frac{x_1+1}{x_2}).
	\end{array}$$
According to \Cref{M action provides mutation invariants rank 2}, we get mutation invariants of $G_2$ type as follows. Take the symmetric polynomial $$\Phi(X_1,\cdots,X_{8})=\dfrac{1}{2}(X_1+\dots +X_{8}),$$ and the rational function $$F_1(X_1,X_2)=X_2.$$ Then the mutation invariant is
	\begin{align}
		\mcT_1(x_1,x_2)=&x_2+\frac{x^2_3+x_1+1}{x_1x_2}+\dfrac{x_2^3+x_1^2+2x_1+1}{x_1x_2^2}+\frac{x_1+1}{x_2} \notag \\
=&\frac{x_2^4+x_1x_2^3+x_2^3+x_1^2x_2+2x_1x_2+x_1^2+x_2+2x_1+1}{x_1x_2^2}.\label{G2 invariant1} 
	\end{align}
Similarly, we can take another rational function $$F_2(X_1,X_2)=X_1$$ and get the mutation invariant
 \begin{align}
	\mcT_2(x_1,x_2)=
\frac{x_1x_2^6+x_2^6+x_1^3x_2^3+5x_1x_2^3+x_1^4+2x_2^3+4x_1^3+6x_1^2+4x_1+1}{x_1^2x_2^3}. \label{G2 invariant2} 
 \end{align}
\end{example}

\subsection{Mutation invariants of affine type: Existence}\

For mutation invariants of finite type, there is a complete characterization as \Cref{M action provides mutation invariants rank 2}. However, for both affine and non-affine type, it does not hold. In the following, we provide two examples of mutation invariants of affine type,  that is $mn=4$ in \eqref{matrix}, thereby proving the existence.
\begin{example}[the $A_1^{(1)}$ type]\label{example1}
	Let $m=n=2$ and $\mathbf{x}=(x_{1}, x_{2})$ be the initial cluster. Then the mutation rules are $$\begin{array}{c}\mu_{1}(x_1,x_2)=(\dfrac{x_2^2+1}{x_1},x_2),\ \mu_{2}(x_1,x_2)=(x_1,\dfrac{x_1^2+1}{x_2}).
	\end{array}$$
	The following lemma exhibits a mutation invariant of $\mcA$.
	\begin{lemma}\label{AA} 
		Let $\mathcal{T}(x_1,x_2)$ be a rational function in $\mathbb{Q}(x_1,x_2)$ 
		defined by \begin{align}\mathcal{T}(x_1,x_2)=\dfrac{x_1^2+x_2^2+1}{x_1x_2}.\label{22 invariant}
		\end{align} 
	Then $\mathcal{T}( \mu_{i}(x_1,x_2))=\mathcal{T}(x_1,x_2)$, 
		$i= 1,2$, that is  $\mathcal{T}(x_1,x_2)$ is a mutation invariant.
	\end{lemma}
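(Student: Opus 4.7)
The plan is to use Lemma \ref{equivalent} to reduce the verification to a direct check that $\mathcal{T}$ is preserved by the two elementary mutations $\mu_1$ and $\mu_2$, and then to exploit the manifest symmetry $\mathcal{T}(x_1,x_2) = \mathcal{T}(x_2,x_1)$ to halve the work.

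First I would invoke Lemma \ref{equivalent}: since the cluster algebra has rank 2, it satisfies the IMR condition, so to show that $\mathcal{T}(x_1,x_2)$ is a mutation invariant it suffices to verify that $\mathcal{T}(\mu_1(x_1,x_2)) = \mathcal{T}(x_1,x_2)$ and $\mathcal{T}(\mu_2(x_1,x_2)) = \mathcal{T}(x_1,x_2)$. Second, I would observe that $\mathcal{T}(x_1,x_2) = \mathcal{T}(x_2,x_1)$ and that the mutation rules here are symmetric under the swap $(x_1,x_2) \leftrightarrow (x_2,x_1)$, so the $\mu_2$ identity follows from the $\mu_1$ identity by symmetry; this reduces everything to one direct computation.

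Third, I would carry out the substitution $x_1 \mapsto (x_2^2+1)/x_1$ in $\mathcal{T}$. The calculation is:
\begin{align*}
\mathcal{T}\!\left(\tfrac{x_2^2+1}{x_1},\, x_2\right)
= \frac{\left(\tfrac{x_2^2+1}{x_1}\right)^{\!2} + x_2^2 + 1}{\tfrac{x_2^2+1}{x_1}\cdot x_2}
= \frac{(x_2^2+1)^2 + x_1^2(x_2^2+1)}{x_1 x_2 (x_2^2+1)}
= \frac{x_2^2+1 + x_1^2}{x_1 x_2},
\end{align*}
which is exactly $\mathcal{T}(x_1,x_2)$. This is the only real step, and it is a short rational-function simplification: the common factor $(x_2^2+1)$ cancels cleanly from numerator and denominator, which is the key algebraic point making the identity work.

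There is no genuine obstacle here — the proof is a one-line calculation plus a symmetry remark — so the only thing to be careful about is writing the cancellation transparently so the reader sees why the expression collapses back to $(x_1^2+x_2^2+1)/(x_1x_2)$. I would present the $\mu_1$ computation in display-math form as above, then state that the $\mu_2$ case is identical after swapping $x_1$ and $x_2$, and finally invoke Lemma \ref{equivalent} to conclude.
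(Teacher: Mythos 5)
Your proof is correct, and it reaches the same reduction as the paper (handle only $\mu_1$, then appeal to the $x_1\leftrightarrow x_2$ symmetry of both $\mathcal{T}$ and the $m=n=2$ mutation rules), but the one substantive step is done differently. You verify the identity by direct substitution of $x_1\mapsto (x_2^2+1)/x_1$ and explicit cancellation of the factor $(x_2^2+1)$. The paper instead rewrites $\mathcal{T}(x_1,x_2)=\mathcal{T}$ as $x_1^2 - x_2\mathcal{T}x_1 + x_2^2+1=0$, so that $x_1$ is a root of $f(\lambda)=\lambda^2 - x_2\mathcal{T}\lambda + x_2^2+1$, and observes via Vieta's product formula that the other root is exactly $x_1'=(x_2^2+1)/x_1$; invariance is then immediate because $x_1'$ satisfies the same relation with the same $\mathcal{T}$. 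The two computations are equivalent in content, but the Vieta viewpoint buys something extra that the paper exploits later: the sum formula $x_1+x_1'=x_2\mathcal{T}$ recorded in Remark \ref{Integral}, which is what guarantees that mutations preserve positive integrality of solutions in the Diophantine application (Lemma \ref{main2}). Your direct-cancellation argument proves the lemma just as cleanly but does not record that relation, so if you were continuing to the applications you would need to extract it separately.
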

	\begin{proof}
		By the symmetry, without loss of generality, we only consider the case that $i=1$. Let $\mathcal{T}=\mathcal{T}(x_1,x_2)$ and we have $x_1^{2}+x_2^{2}+1=\mathcal{T}x_1x_2$. Hence $x_1$ can be viewed as a zero point of the polynomial $f=\lambda^{2}-x_2\mathcal{T}\lambda+x_2^{2}+1$. According to Vieta's formulas, $x_1^{\prime}=\frac{x_2^{2}+1}{x_1}$ is the other zero point of $f$, that is $\mathcal{T}( \mu_{1}(x_1,x_2))=\mathcal{T}(x_1,x_2)$. Hence $\mathcal{T}(x_1,x_2)$ is a mutation invariant. 
	\end{proof}
\begin{remark}\label{Integral}
The Vieta's formula $x_1x_1^{\prime}=x_2^2+1$ can be viewed as the cluster mutation rule $\mu_{1}$. Furthermore, there is another Vieta's formula $x_1+x_1^{\prime}=x_2\mathcal{T}$. Specifically, for any $x_1,x_2,\mathcal{T}\in \mathbb{N}_{+}$, it follows that $x_1^{\prime}=\frac{x_2^{2}+1}{x_1}\in \mathbb{N}_{+}$. Similarly, $x_2^{\prime}=\frac{x_1^{2}+1}{x_2}\in \mathbb{N}_{+}$.
\end{remark}
\end{example}
\begin{example}[the $A_2^{(2)}$ type]\label{example2} 
	Let $m=1,n=4$ and $\mathbf{x}=(x_{1}, x_{2})$ be the initial cluster. Then the mutation rules are
	$$\begin{array}{c}\mu_{1}(x_1,x_2)=(\dfrac{x_2^{4}+1}{x_1},x_2),\ \mu_{2}(x_1,x_2)=(x_1,\dfrac{x_1+1}{x_2}).
	\end{array}$$
	The following lemma exhibits a  mutation invariant of $\mcA$.
	\begin{lemma}\label{14}
		Let $\mathcal{T}(x_1,x_2)$ be a rational function in $\mathbb{Q}(x_1,x_2)$ defined by
	\begin{align} \mathcal{T}(x_1,x_2)=\dfrac{x_2^4+x_1^2+2x_1+1}{x_1x_2^2}.\label{14 invariant} \end{align} 
	Then $\mathcal{T}( \mu_{i}(x_1,x_2))=\mathcal{T}(x_1,x_2)$, 
		$i= 1,2$, that is  $\mathcal{T}(x_1,x_2)$ is a mutation invariant.
	\end{lemma}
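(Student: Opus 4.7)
The plan is to mimic the proof of the previous lemma (\Cref{AA}) by using Vieta's formulas, after rewriting $\mathcal{T}$ in a form that makes the quadratic structure transparent. The key observation is that the numerator factors nicely: $x_2^4 + x_1^2 + 2x_1 + 1 = x_2^4 + (x_1+1)^2$, so
\[
\mathcal{T}(x_1,x_2) = \frac{(x_1+1)^2 + x_2^4}{x_1 x_2^2}.
\]
This form exposes two different quadratic relations, one for each mutation direction.

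For the $\mu_1$-invariance, I would treat $\mathcal{T}\cdot x_1 x_2^2 = (x_1+1)^2 + x_2^4$ as a quadratic equation in $x_1$, namely
\[
\lambda^2 + (2 - \mathcal{T} x_2^2)\lambda + (x_2^4 + 1) = 0.
\]
Then $x_1$ is a root, and by Vieta's formulas the product of the two roots equals $x_2^4+1$, so the other root is $x_1' = \tfrac{x_2^4+1}{x_1}$, which is precisely the first coordinate of $\mu_1(x_1,x_2)$. Since both $x_1$ and $x_1'$ satisfy the same equation with the same constant $\mathcal{T}$, it follows that $\mathcal{T}(\mu_1(x_1,x_2)) = \mathcal{T}(x_1,x_2)$.

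For the $\mu_2$-invariance, the trick I would use is to substitute $y = x_2^2$: since $x_2$ only appears in even powers in the numerator and denominator of $\mathcal{T}$, we have $\mathcal{T} = \tfrac{y^2 + (x_1+1)^2}{x_1 y}$. Now view this as a quadratic in $y$:
\[
y^2 - \mathcal{T} x_1 y + (x_1+1)^2 = 0.
\]
Vieta's formulas give $y \cdot y' = (x_1+1)^2$, so the other root is $y' = \tfrac{(x_1+1)^2}{x_2^2} = \bigl(\tfrac{x_1+1}{x_2}\bigr)^2$, which is exactly $(x_2')^2$ for $x_2' = \tfrac{x_1+1}{x_2}$, the second coordinate of $\mu_2(x_1,x_2)$. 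Substituting back into $\mathcal{T}$ yields $\mathcal{T}(x_1, x_2') = \mathcal{T}(x_1, x_2)$.

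The main subtlety, and really the only non-routine point, is the $\mu_2$ case: because $n=4$ is even while the mutation acts on $x_2$ (not on $x_2^2$), I need the observation that $\mathcal{T}$ depends on $x_2$ only through $y=x_2^2$, so the Vieta partner in the $y$-variable automatically squares the Vieta partner in the $x_2$-variable. Once this substitution is in place, both invariances reduce to identical one-line Vieta arguments, and the lemma follows.
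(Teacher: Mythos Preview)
Your proof is correct and follows essentially the same approach as the paper: both use Vieta's formulas on the quadratic $\lambda^2+(2-\mathcal{T}x_2^2)\lambda+x_2^4+1$ for $\mu_1$, and for $\mu_2$ the paper works with the biquadratic $\lambda^4-x_1\mathcal{T}\lambda^2+(x_1+1)^2$ in $x_2$, which is precisely your quadratic in $y=x_2^2$ before the substitution is made explicit. Your version spells out the substitution $y=x_2^2$ more carefully, but the underlying argument is identical.
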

	\begin{proof}
		Let $\mathcal{T}=\mathcal{T}(x_1,x_2)$. For $i=1$, $x_1$ can be viewed as a zero point of the quadratic polynomial $f_{1}=\lambda^{2}+(2-x_2^2\mathcal{T})\lambda+x_2^{4}+1$. According to Vieta's formulas, $x_1^{\prime}=\frac{x_2^4+1}{x_1}$ is another zero point of $f_{1}$ such that $\mathcal{T}( \mu_{1}(x_1,x_2))=\mathcal{T}(x_1,x_2)$.
		Similarly, for $i=2$, $x_2$ can be viewed as a zero point of the biquadratic polynomial $f_2=\lambda^4-x_1\mathcal{T}\lambda^2+(x_1+1)^2.$ By Vieta's formulas, $x_2^{\prime}=\frac{x_1+1}{x_2}$ is another zero point of $f_{2} $ such that $\mathcal{T}( \mu_{2}(x_1,x_2))=\mathcal{T}(x_1,x_2)$.
	\end{proof}
	
	\begin{remark}\label{int2}
		By the proof of \Cref{14}, we observe that the Vieta's formulas $x_1x_1' = x_2^4+1$ and $x_1 + x_1' = \mcT x_2^2-2$ can be identified as the mutation rules. Therefore, if $x_1,x_2,\mathcal{T}\in \mathbb{N}_{+}$, we can conclude that $x_1^{\prime}=\frac{x_2^{4}+1}{x_1}\in \mathbb{N}_{+}$. Furthermore, according to $x_2^2+(\frac{x_1+1}{x_2})^2=\mathcal{T}x_1$, we also have $x_2^{\prime}=\frac{x_1+1}{x_2}\in \mathbb{N}_{+}$. 
	\end{remark}
\begin{remark}
	The mutation invariants given by \Cref{AA} and \Cref{14} are both Laurent mutation invariants.
	\end{remark}
\end{example}
\begin{question}
	What is the characterization of mutation invariants of the cluster algebra $\mcA$ of affine type?
\end{question}
\subsection{Laurent mutation invariants of non-affine type: Non-existence}\

We have proved that for \eqref{matrix}, when $mn\leq4$, there exist mutation invariants of $\mcA$. In this subsection, we aim to prove that when $mn\geq 5$, that is of non-affine type, there does not exist a Laurent mutation invariant.

Firstly, we introduce $\mfd$-vectors of rank 2 and provide a different proof of the established theorems \cite[Theorem 1.8]{FZ2} or \cite[Theorem 5.1.1]{FWZ}. When $n=2$, the 2-regular tree $\mbT_{2}$ indexing the cluster pattern is denoted by
\begin{align}\dots\stackrel{2}{\longleftrightarrow}t_{-2}\stackrel{1}{\longleftrightarrow}t_{-1}\stackrel{2}{\longleftrightarrow}t_0 \stackrel{1}{\longleftrightarrow}t_1\stackrel{2}{\longleftrightarrow}t_2\stackrel{1}{\longleftrightarrow} \dots \label{T2}
\end{align}
and the recurrence relations for $\mfd$-vectors as \eqref{d-vector} can be reduced to
\begin{align} \notag
	\mfd_{l;t_{1}}=\left\{
	\begin{array}{ll}
		-\mfd_{1;t_{0}}, &   l=1,\\
		\mfd_{2;t_{0}}, &   l= 2,
	\end{array} \right.
	\mfd_{l;t_{2k+1}}=\left\{
	\begin{array}{ll}
		-\mfd_{1;t_{2k}}+n\mfd_{2;t_{2k}}, &   l=1,\\
		\mfd_{2;t_{2k}}, &   l= 2,
	\end{array} \right.
\end{align}
respectively for $t_0\stackrel{1}{\longleftrightarrow}t_{1}$, $t_{2k}\stackrel{1}{\longleftrightarrow}t_{2k+1}(k\neq 0)$, and
\begin{align} \notag
	\mfd_{l;t_{0}}=\left\{
	\begin{array}{ll}
		\mfd_{1;t_{-1}}, &   l= 1, \\
		-\mfd_{2;t_{-1}}, &   l=2,
	\end{array} \right.
	\mfd_{l;t_{2k}}=\left\{
	\begin{array}{ll}
		\mfd_{1;t_{2k-1}}, &   l=1, \\
		-\mfd_{2;t_{2k-1}}+m\mfd_{1;t_{2k-1}}, &   l=2,
	\end{array} \right.
\end{align}
respectively for $t_{-1}\stackrel{2}{\longleftrightarrow}t_{0}$, $t_{2k-1}\stackrel{2}{\longleftrightarrow}t_{2k}(k\neq 0)$. It is clear that 
$$\mfd_{1;t_1}=\begin{pmatrix}
	1 \\ 0
\end{pmatrix}, \mfd_{2;t_1}=\begin{pmatrix}
	0 \\ -1
\end{pmatrix}, \mfd_{1;t_2}=\begin{pmatrix}
	1 \\ 0
\end{pmatrix}, \mfd_{2;t_2}=\begin{pmatrix}
	m \\ 1
\end{pmatrix}.$$ 

Let $U$ and $V$ be matices in the following $$U=\begin{pmatrix}1 & 0 & 0 & 0\\ m & -1& 0 & 0\\ 0 & 0 & -1 & n\\ 0 & 0 & 0 & 1\end{pmatrix}, V=\begin{pmatrix}-1 & n & 0 & 0\\ 0 & 1& 0 & 0\\ 0 & 0 & 1 & 0\\ 0 & 0 & m & -1\end{pmatrix},$$ and denote the formal vectors by
 $$B_{k}=\begin{pmatrix}\mfd_{1;t_{2k}} \\ \mfd_{2;t_{2k}} \\ \mfd_{1;t_{2k+1}} \\ \mfd_{2;t_{2k+1}}\end{pmatrix}, A_{k}=\begin{pmatrix}\mfd_{1;t_{2k-1}} \\ \mfd_{2;t_{2k-1}} \\ \mfd_{1;t_{2k}} \\ \mfd_{2;t_{2k}}\end{pmatrix}.$$ Hence, we get $$W=UV=\begin{pmatrix}-1 & n & 0 & 0\\ -m & mn-1& 0 & 0\\ 0 & 0 & mn-1 & -n\\ 0 & 0 & m & -1\end{pmatrix},$$ and for any $k\geq 1$, $$B_{k}=WB_{k-1}=W^{k-1}B_{1}=W^{k-1}UA_{1}.$$
 
Note that when $mn=4$, $W$ is not diagonalizable but uptriangularizable. When $mn\geq 5$, $W$ is diagonalizable. Hence, the expressions of $\mfd$-vectors of rank 2 with $mn\geq 4$ are as follows. 
\begin{lemma}\label{d}
	There are three cases of expressions of $\mfd$-vectors of rank 2 with $mn\geq 4$.
	\begin{enumerate}
		\item Case that $m=n=2$: for any $k\geq 1$,\\
		$\begin{array}{l} \mfd_{1;t_{2k}}=(2k-1)\mfd_{1;t_{1}}-(2k-2)\mfd_{2;t_{1}},\\
			\mfd_{2;t_{2k}}=2k\mfd_{1;t_{1}}-(2k-1)\mfd_{2;t_{1}},\\
			\mfd_{1;t_{2k+1}}=-(2k-1)\mfd_{1;t_{2}}+2k\mfd_{2;t_{2}},\\
			\mfd_{2;t_{2k+1}}=-(2k-2)\mfd_{1;t_{2}}+(2k-1)\mfd_{2;t_{2}}.
		\end{array}$
		\item Case that $m=1$, $n=4$: for any $k\geq 1$,\\
		$\begin{array}{l}
			\mfd_{1;t_{2k}}=(2k-1)\mfd_{1;t_{1}}-(4k-4)\mfd_{2;t_{1}},\\
			\mfd_{2;t_{2k}}=k\mfd_{1;t_{1}}-(2k-1)\mfd_{2;t_{1}},\\
			\mfd_{1;t_{2k+1}}=-(2k-1)\mfd_{1;t_{2}}+4k\mfd_{2;t_{2}},\\
			\mfd_{2;t_{2k+1}}=-(k-1)\mfd_{1;t_{2}}+(2k-1)\mfd_{2;t_{2}}.
		\end{array}$
		\item Case that $mn\geq 5$: for any $k\geq 1$,\\
		$\begin{array}{l}
			\mfd_{1;t_{2k}}=(\alpha_{1,k-1}+m\alpha_{2,k-1})\mfd_{1;t_{1}}-\alpha_{2,k-1}\mfd_{2;t_{1}},\\
			\mfd_{2;t_{2k}}=(\alpha_{3,k-1}+m\alpha_{4,k-1})\mfd_{1;t_{1}}-\alpha_{4,k-1}\mfd_{2;t_{1}},\\
			\mfd_{1;t_{2k+1}}=-\beta_{1,k-1}\mfd_{1;t_{2}}+(n\beta_{1,k-1}+\beta_{2,k-1})\mfd_{2;t_{2}},\\
			\mfd_{2;t_{2k+1}}=-\beta_{3,k-1}\mfd_{1;t_{2}}+(n\beta_{3,k-1}+\beta_{4,k-1})\mfd_{2;t_{2}},\\
		\end{array}$\\
		\text{where} $a= -1+\dfrac{mn}{2}$, $b= \dfrac{\sqrt{mn(mn-4)}}{2}$ \text{and} \\
		$\begin{array}{l}
			\alpha_{1,k-1}=\dfrac{1}{2}[(a-b)^{k-1}+(a+b)^{k-1}+\dfrac{\sqrt{mn}(a-b)^{k-1}-\sqrt{mn}(a+b)^{k-1}}{\sqrt{mn-4}}],\\
			\alpha_{2,k-1}=\dfrac{-\sqrt{n}(a-b)^{k-1}+\sqrt{n}(a+b)^{k-1}}{\sqrt{m(mn-4)}},\\
			\alpha_{3,k-1}=\dfrac{\sqrt{m}(a-b)^{k-1}-\sqrt{m}(a+b)^{k-1}}{\sqrt{n(mn-4)}},\\
			\alpha_{4,k-1}=\dfrac{(-\sqrt{mn}+\sqrt{mn-4})(a-b)^{k-1}+(\sqrt{mn}+\sqrt{mn-4})(a+b)^{k-1}}{2\sqrt{mn-4}},\\
			\beta_{1,k-1}=\dfrac{1}{2}[(a-b)^{k-1}+(a+b)^{k-1}+\dfrac{-\sqrt{mn}(a-b)^{k-1}+\sqrt{mn}(a+b)^{k-1}}{\sqrt{mn-4}}],\\
			\beta_{2,k-1}=\dfrac{\sqrt{n}(a-b)^{k-1}-\sqrt{n}(a+b)^{k-1}}{\sqrt{m(mn-4)}},\\
			\beta_{3,k-1}=\dfrac{-\sqrt{m}(a-b)^{k-1}+\sqrt{m}(a+b)^{k-1}}{\sqrt{n(mn-4)}},\\
			\beta_{4,k-1}=\dfrac{(\sqrt{mn}+\sqrt{mn-4})(a-b)^{k-1}+(-\sqrt{mn}+\sqrt{mn-4})(a+b)^{k-1}}{2\sqrt{mn-4}}.
		\end{array}$
	\end{enumerate}
\end{lemma}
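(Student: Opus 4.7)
The plan is to exploit the explicit matrix recurrence $B_k = W^{k-1} U A_1$ already established in the excerpt and compute $W^{k-1}$ in closed form via spectral analysis of $W$. The key observation is that $W$ is block diagonal with two $2\times 2$ blocks, each having trace $mn-2$ and determinant $1$; hence both blocks share the characteristic polynomial $\lambda^2 - (mn-2)\lambda + 1$, whose discriminant is $mn(mn-4)$. This immediately separates the regimes into $mn = 4$ (affine, repeated eigenvalue $1$) and $mn \geq 5$ (distinct eigenvalues $a \pm b$), matching the case distinction in the statement.

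For cases (1) and (2), i.e.\ $mn = 4$, Cayley--Hamilton gives $(W-I)^2 = 0$, so by the binomial theorem $W^{k-1} = I + (k-1)(W-I)$. Consequently
\[
B_k = B_1 + (k-1)(B_2 - B_1),
\]
which is linear in $k$. Substituting the concrete initial $\mfd$-vectors $\mfd_{1;t_1}, \mfd_{2;t_1}, \mfd_{1;t_2}, \mfd_{2;t_2}$ and reading off the four components then yields the stated formulas; the split between $m=n=2$ and $m=1, n=4$ is absorbed into the differing values of $B_1$ and $B_2$ in the two subcases.

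For case (3), $mn \geq 5$, each block is diagonalizable over $\mathbb{R}$. I would compute eigenvectors for each $2 \times 2$ block explicitly --- for the upper block, $(n,\, \tfrac{mn}{2} \pm b)^\top$ works --- and then write $W^{k-1} = c_{k-1} I + d_{k-1} W$, where the scalars are determined by the $2\times 2$ linear system $c_{k-1} + d_{k-1}(a \pm b) = (a \pm b)^{k-1}$. This yields closed expressions for $c_{k-1}$ and $d_{k-1}$ in terms of $(a \pm b)^{k-1}$ and $b = \tfrac{1}{2}\sqrt{mn(mn-4)}$. Applying $W^{k-1}$ to $U A_1$ and expanding in the basis $\{\mfd_{1;t_1}, \mfd_{2;t_1}\}$ (respectively $\{\mfd_{1;t_2}, \mfd_{2;t_2}\}$) then reproduces the coefficients $\alpha_{i,k-1}$ and $\beta_{i,k-1}$.

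The main obstacle is computational rather than conceptual: matching the explicit forms of $\alpha_{i,k-1}$ and $\beta_{i,k-1}$ requires careful simplification involving $\sqrt{m}, \sqrt{n}, \sqrt{mn}$ and $\sqrt{mn-4}$, and sign conventions for eigenvectors must be fixed consistently across the two blocks. A sanity check at $k=1$ (where everything must collapse to the initial data $B_1 = UA_1$) and at $k=2$ (where $B_2 = W B_1$ is computed directly) pins down all normalizations and eliminates sign ambiguity.
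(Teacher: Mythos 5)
Your proposal is correct and follows essentially the same route as the paper: the paper likewise reduces everything to the recurrence $B_k = W^{k-1}UA_1$ and obtains the closed forms from the spectral structure of $W$ (noting that $W$ is merely upper-triangularizable when $mn=4$ and diagonalizable when $mn\geq 5$, which is exactly your eigenvalue discriminant $mn(mn-4)$ dichotomy). Your packaging via Cayley--Hamilton --- $(W-I)^2=0$ giving $B_k = B_1 + (k-1)(B_2-B_1)$ in the affine case, and $W^{k-1}=c_{k-1}I+d_{k-1}W$ in the non-affine case --- is a slightly cleaner way to organize the same computation and does reproduce the stated coefficients $\alpha_{i,k-1}$, $\beta_{i,k-1}$.
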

%\begin{proof}
%	Without loss of generality, we only prove the case of $mn\geq 5$ since other cases are similar. Firstly, we claim that for any $k\geq 1$, \begin{align}W^{k-1}=\begin{pmatrix} \alpha_{1,k-1} & \alpha_{2,k-1} & 0 & 0\\
%\alpha_{3,k-1} & \alpha_{4,k-1} & 0 & 0\\
%0 & 0 & \beta_{1,k-1} & \beta_{2,k-1}\\
%0 & 0 & \beta_{3,k-1} & \beta_{4,k-1}
% \end{pmatrix}. \label{W^{k-1}}\end{align} In fact, we can take the induction on $k$. When $k=1$, it is direct that $W^0=I_4$. Assume that \eqref{W^{k-1}} holds for $k-1$. Now, we focus on the $(1,1)$-component of $W^k$ since others are similar. Notice that 
%\begin{align}
 % -\alpha_{1,k-1}+n\alpha_{3,k-1}
  %&= -\dfrac{1}{2}[(a-b)^{k-1}+(a+b)^{k-1}-\dfrac{\sqrt{mn}(a-b)^{k-1}-\sqrt{mn}(a+b)^{k-1}}{\sqrt{mn-4}}]\notag \\
  %&= \dfrac{1}{2}(a-b)^{k}+\dfrac{1}{2}(a+b)^{k}+\dfrac{\sqrt{mn}(a-b)^{k}-\sqrt{mn}(a+b)^{k}}{2\sqrt{mn-4}}\notag\\
  %&= \alpha_{1,k}.\notag
%\end{align}
%Furthermore, by $B_{k}=W^{k-1}UA_{1}$, we can directly get the formulas of $\mfd$-vectors as above.
%\end{proof}
\begin{lemma}\label{infinite}
	Each component of $\mfd_{i;t_j}$ with $mn\geq 4$ tends to positive infinity as $j$ tends to positive infinity. 
\end{lemma}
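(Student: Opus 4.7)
The plan is to read each component of $\mfd_{i;t_j}$ off from the explicit formulas of \Cref{d}, plug in the initial denominator vectors $\mfd_{1;t_1}=(1,0)^T$, $\mfd_{2;t_1}=(0,-1)^T$, $\mfd_{1;t_2}=(1,0)^T$, $\mfd_{2;t_2}=(m,1)^T$, and verify that each component tends to $+\infty$ as $k\to\infty$ (whence $j=2k$ or $j=2k+1$ does). The statement will follow case by case according to the trichotomy of \Cref{d}.

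For Case (1) ($m=n=2$) and Case (2) ($m=1$, $n=4$), the scalar coefficients in \Cref{d} are linear polynomials in $k$ with positive leading terms, so after substituting the initial vectors each component of each $\mfd_{i;t_j}$ is itself a linear polynomial in $k$ with positive leading coefficient. For instance in Case (1) one gets $\mfd_{1;t_{2k}}=(2k-1,2k-2)^T$ and $\mfd_{2;t_{2k}}=(2k,2k-1)^T$, and similarly for the odd index and for Case (2). The conclusion is then immediate.

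Case (3) ($mn\geq 5$) is the substantive one. The first observation is that $a^2-b^2=(mn/2-1)^2-mn(mn-4)/4=1$, so $(a-b)=(a+b)^{-1}$. Since $mn\geq 5$ forces $b>0$ and $a\geq 3/2$, one has $a+b>1$ and $0<a-b<1$. Hence $(a+b)^{k-1}\to+\infty$ while $(a-b)^{k-1}\to 0$, so the asymptotic behaviour of each $\alpha_{i,k-1}$ and $\beta_{i,k-1}$ is controlled by its coefficient on $(a+b)^{k-1}$. After substituting the initial vectors one reads off, e.g., $\mfd_{1;t_{2k}}=(\alpha_{1,k-1}+m\alpha_{2,k-1},\ \alpha_{2,k-1})^T$: the dominant coefficient in the second slot is $\sqrt{n}/\sqrt{m(mn-4)}>0$, and the one in the first slot reduces, after cancelling the $\sqrt{mn}/(2\sqrt{mn-4})$ terms, to $\tfrac12+\tfrac{\sqrt{mn}}{2\sqrt{mn-4}}>0$. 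The other six components (belonging to $\mfd_{2;t_{2k}}$, $\mfd_{1;t_{2k+1}}$, $\mfd_{2;t_{2k+1}}$) are handled by the same template: isolate the coefficient of $(a+b)^{k-1}$, regroup so that all $\sqrt{mn-4}$ factors appear with the same sign, and use $mn\geq 5$ (hence $mn-2,mn-3>0$) to deduce positivity.

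The main obstacle is simply the bookkeeping in Case (3): one must perform the dominant-coefficient extraction for each of the eight components and verify positivity by algebraic simplification. Each computation looks unpleasant but collapses after rationalising, because the coefficient of $(a+b)^{k-1}$ in the relevant linear combination always contains a term $\tfrac{m}{2}$ or $\tfrac{n}{2}$ or $\tfrac{mn-1}{2}$ plus a manifestly positive multiple of $1/\sqrt{mn-4}$; no cancellation across the two groups occurs when $mn\geq 5$. Once that routine verification is carried out for each of the eight components, $(a+b)^{k-1}\to\infty$ forces each component to $+\infty$, completing Case (3).
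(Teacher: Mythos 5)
Your proposal is correct and follows essentially the same route as the paper: both handle $mn=4$ by reading off the linear-in-$k$ formulas from \Cref{d}, and for $mn\geq 5$ both use $a+b>1$, $0<a-b<1$ (your observation $a^2-b^2=1$ is exactly why) to reduce the claim to positivity of the coefficient of $(a+b)^{k-1}$ in each component. The paper simply checks the two components of $\mfd_{1;t_{2k}}$ and declares the remaining cases similar, whereas you spell out that all eight must be verified; the substance is identical.
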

\begin{proof} Firstly, by \Cref{d}, it is direct that the lemma holds for $mn=4$. For $mn\geq 5$, without loss of generality, we only consider $\mfd_{1;t_{2k}}$ since other cases are similar. The first component of $\mfd_{1;t_{2k}}$ is 
	$$\alpha_{1,k-1}+m\alpha_{2,k-1}=\dfrac{1}{2}[(a-b)^{k-1}+(a+b)^{k-1}+\dfrac{\sqrt{mn}(a+b)^{k-1}-\sqrt{mn}(a-b)^{k-1}}{\sqrt{mn-4}}].$$
	Notice that $a>b>1$ and $0<a-b<1$. Hence, it tends to positive infinity as $k$ tends to positive infinity.
	The second component of    $\mfd_{1;t_{2k}}$ is 
	$$\alpha_{2,k-1}=\dfrac{-\sqrt{n}(a-b)^{k-1}+\sqrt{n}(a+b)^{k-1}}{\sqrt{m(mn-4)}}.$$ It also tends to positive infinity as $k$ tends to positive infinity.
\end{proof}

\begin{theorem}[{\cite[Theorem 1.8]{FZ2}}, {\cite[Theorem 5.1.1]{FWZ}}]
	A cluster algebra $\mcA$ of rank 2 with the initial exchange matrix \eqref{matrix} is of finite type if and only if $mn \leq 3$.
\end{theorem}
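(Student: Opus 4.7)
The plan is to prove the two directions separately. For the ``if'' direction, suppose $mn \leq 3$. I would simply invoke the explicit enumerations of clusters already given in Examples \ref{double A1}, \ref{A2}, \ref{B2}, \ref{G2}, which correspond respectively to $mn=0,1,2,3$. Each of these examples exhibits a finite list of labeled clusters together with the observation that applying $\mu_1$ or $\mu_2$ to any cluster on the list returns another cluster on the list (and the sequence is periodic of periods $4,10,6,8$ respectively). Hence the cluster pattern $\mathbf{\Sigma}$ has only finitely many distinct seeds, so $\mathcal{A}$ is of finite type by definition.

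For the ``only if'' direction, I would argue the contrapositive: assume $mn \geq 4$ and show $\mathcal{A}$ is of infinite type. The key observation is that the denominator vector of a cluster variable is an intrinsic invariant: by \eqref{express of d-vector}, $\mathbf{d}_{i;t}$ is read off from the Laurent expansion of $x_{i;t}$ in the fixed initial cluster $(x_1,x_2)$, so distinct $\mathbf{d}$-vectors necessarily correspond to distinct cluster variables. But \Cref{infinite} (whose proof rests on the closed forms from \Cref{d}) shows that every component of $\mathbf{d}_{i;t_j}$ tends to positive infinity as $j \to \infty$ when $mn \geq 4$. In particular, the vectors $\mathbf{d}_{1;t_{2k}}$ are pairwise distinct for all sufficiently large $k$, producing infinitely many distinct cluster variables, and hence infinitely many distinct seeds. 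Therefore $\mathcal{A}$ is of infinite type.

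No new calculation should be required; the real work—diagonalizing (or Jordanizing) the transfer matrix $W=UV$ governing the $\mathbf{d}$-vector recursion and extracting growth estimates—has already been done in \Cref{d} and \Cref{infinite}. The only conceptual subtlety worth remarking on is that the boundary case $mn=4$ does \emph{not} fall on the finite side even though $W$ fails to be diagonalizable there: its unique repeated eigenvalue is $1$, and the resulting Jordan block still forces linear (hence unbounded) growth of $\mathbf{d}$-vectors. This is exactly why the dichotomy lies at $mn=3$ versus $mn=4$, matching the classical statement, and the proof assembles cleanly from the preceding lemmas without further input.
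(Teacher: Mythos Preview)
Your proposal is correct and follows essentially the same route as the paper: for $mn\leq 3$ you invoke the explicit periodic cluster lists (the paper says ``by direct calculation''), and for $mn\geq 4$ you appeal to \Cref{infinite} to conclude infinitely many distinct $\mathbf{d}$-vectors, hence infinitely many seeds. Your write-up is in fact slightly more explicit than the paper's two-line proof, in particular spelling out why distinct $\mathbf{d}$-vectors force distinct cluster variables and commenting on the $mn=4$ borderline.
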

\begin{proof}
	When $mn\leq3$, it is clear that the cluster algebra $\mcA$ is of finite type by direct calculation. When $mn\geq 4$, by \Cref{infinite}, the cluster algebra $\mcA$ is of infinite type.
\end{proof}
From now on, we focus on proving the non-existence of Laurent mutation invariant of non-affine type. First of all, we need several preparatory lemmas in the following.
\begin{lemma}\label{term 1} For any cluster algebra $\mcA$ of rank 2 and $i\in \{1,2\}$, the numerator $N_{i;t_k}(x_1, x_2)$ in \eqref{express of d-vector} has constant term $1$ with $|k|\geq 2$.
\end{lemma}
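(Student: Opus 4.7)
I will argue by induction on $|k|$, combining the rank-$2$ exchange relation with the $\mathbf{d}$-vector recursion \eqref{d-vector}. The reflection symmetry $t_k\leftrightarrow t_{-k}$ of $\mathbb{T}_2$ lets me restrict to $k\geq 2$.

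For the base case $k=2$, the direct computation $\mu_2\mu_1(x_1,x_2)$ gives
\[
x_{1;t_2}=\frac{x_2^n+1}{x_1},\qquad x_{2;t_2}=\frac{(x_2^n+1)^m+x_1^m}{x_1^m x_2},
\]
so $N_{1;t_2}=x_2^n+1$ and $N_{2;t_2}=(x_2^n+1)^m+x_1^m$ both have constant term $1$. For the inductive step, assume $N_{1;t_k}(0,0)=N_{2;t_k}(0,0)=1$ and let $j\in\{1,2\}$ be the label of the edge $t_k\stackrel{j}{\longleftrightarrow}t_{k+1}$, with $i=3-j$. Since $x_{i;t_{k+1}}=x_{i;t_k}$, only the new variable $x_{j;t_{k+1}}$ needs attention, and the rank-$2$ exchange relation reads
\[
x_{j;t_{k+1}}\cdot x_{j;t_k}=x_{i;t_k}^{\,\ell}+1,\qquad \ell=|b_{ij;t_0}|\in\{m,n\}.
\]
By \Cref{d}, for $|k|\geq 2$ the vector $\mathbf{d}_{i;t_k}$ is nonzero with non-negative entries, so the recursion \eqref{d-vector} collapses to $\mathbf{d}_{j;t_{k+1}}+\mathbf{d}_{j;t_k}=\ell\,\mathbf{d}_{i;t_k}$. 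Writing $x_{\alpha;t_k}=N_{\alpha;t_k}/(x_1^{d_{1\alpha;t_k}}x_2^{d_{2\alpha;t_k}})$, substituting into the exchange relation and clearing the (now matched) monomial factors yields the polynomial identity
\[
N_{j;t_{k+1}}\cdot N_{j;t_k}=N_{i;t_k}^{\,\ell}+x_1^{\ell d_{1i;t_k}}x_2^{\ell d_{2i;t_k}}.
\]

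Evaluating at $x_1=x_2=0$, the monomial term on the right vanishes because $\mathbf{d}_{i;t_k}\neq(0,0)$ has at least one positive entry, and by induction $N_{i;t_k}(0,0)^{\ell}=1$ and $N_{j;t_k}(0,0)=1$; hence $N_{j;t_{k+1}}(0,0)=1$, completing the induction.

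\textit{Main obstacle.} The only delicate point is the clean collapse of \eqref{d-vector} to $\mathbf{d}_{j;t_{k+1}}+\mathbf{d}_{j;t_k}=\ell\,\mathbf{d}_{i;t_k}$, which relies on the componentwise non-negativity and nonvanishing of $\mathbf{d}_{i;t_k}$. This is precisely why the hypothesis $|k|\geq 2$ is needed: the value $\mathbf{d}_{2;t_1}=(0,-1)$ breaks the argument for $|k|=1$, since the componentwise maximum in \eqref{d-vector} would no longer reduce to $\ell\,\mathbf{d}_{i;t_k}$. The required non-negativity and nonvanishing for all $|k|\geq 2$ are already encoded in \Cref{d}, so no new input beyond the results established earlier in the paper is required.
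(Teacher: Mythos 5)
Your proof is correct and follows essentially the same route as the paper: induction on $k\geq 2$ after reducing by symmetry, the same base case $N_{1;t_2}=x_2^n+1$, $N_{2;t_2}=(x_2^n+1)^m+x_1^m$, and the same key identity $N_{j;t_{k+1}}N_{j;t_k}=N_{i;t_k}^{\ell}+x_1^{\ell d_{1i;t_k}}x_2^{\ell d_{2i;t_k}}$ evaluated at the origin. The only (cosmetic) divergence is that where you invoke \Cref{d} for the non-negativity of the $\mathbf{d}$-vectors to collapse the recursion \eqref{d-vector} --- note that \Cref{d} only covers $mn\geq 4$, so for finite type you would need to appeal to the standard positivity of denominator vectors instead --- the paper gets the matching of the monomial denominators directly from the uniqueness of the reduced Laurent form, i.e.\ from the fact that neither $N_{j;t_{k+1}}$ nor $N_{i;t_k}^{\ell}+x_1^{\ell d_{1i;t_k}}x_2^{\ell d_{2i;t_k}}$ is divisible by $x_1$ or $x_2$.
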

\begin{proof}
	By symmetry, without loss of generality, we can assume $k\geq0$ and take the induction on $k\geq 2$. When $k=2$, according to \eqref{T2}, it is easy to check that 
	 $$N_{1;t_2}(x_1, x_2)=x_2^n+1\ \text{and}\ N_{2;t_2}(x_1, x_2)=(x_2^n+1)^m+x_1^m, $$ and they both have constant term 1. Assume that the lemma holds for $k=h$. Without loss of generality, we can assume that $h$ is odd. When $k=h+1$, according to the the assumption and 2-direction mutation at $t_h$, we get $$N_{1;t_{h+1}}(x_1, x_2)=N_{1;t_{h}}(x_1, x_2),$$ which has constant term 1. Furthermore, by cluster mutation rules, we get 
	 \begin{align}\dfrac{N_{2;t_{h+1}}(x_1, x_2)}{x_1^{d_{12;t_{h+1}}} x_2^{d_{22;t_{h+1}}}}&=\dfrac{(\frac{N_{1;t_{h}}(x_1, x_2)}{x_1^{d_{11;t_{h}}} x_2^{d_{21;t_{h}}}})^m+1}{\frac{N_{2;t_{h}}(x_1, x_2)}{x_1^{d_{12;t_{h}}} x_2^{d_{22;t_{h}}}}}\notag \\ &= \dfrac{N_{1;t_{h}}^m(x_1, x_2)+x_1^{md_{11;t_{h}}} x_2^{md_{21;t_{h}}}}{N_{2;t_{h}}(x_1, x_2)x_1^{md_{11;t_{h}}-d_{12;t_h}} x_2^{md_{21;t_{h}}-d_{22;t_h}}}.\notag
	 \end{align}
Notice that $x_1,x_2$ do not divide $N_{1;t_{h}}(x_1,x_2)$, and both $N_{1;t_{h}}(x_1, x_2)$ and  $N_{2;t_{h}}(x_1, x_2)$ have constant term 1, we get  
$$N_{2;t_{h}}(x_1,x_2)N_{2;t_{h+1}}(x_1,x_2)= N_{1;t_{h}}^m(x_1, x_2)+x_1^{md_{11;t_{h}}} x_2^{md_{21;t_{h}}}.$$ Subsequently, we get $N_{2;t_{h+1}}(x_1,x_2)$ has constant term 1 and the lemma holds.
\end{proof}
In the following, for a Laurent polynomial $\mcT(x_1,x_2)\in \mathbb{Q}[x_1^{\pm1},x_2^{\pm1}]$, we denote it by 
\begin{align}
	\mcT(x_1,x_2)=\dfrac{\sum_{i,j}\lambda_{ij}x_1^{i}x_2^{j}}{x_1^{s}x_2^{t}}. \label{LP}
\end{align} Note that there is a necessary condition for $\mcT(x_1,x_2)$ to be a Laurent mutation invariant.
\begin{lemma}\label{degree twice}
	If $\mcT(x_1,x_2)$ is a Laurent mutation invariant of $\mcA$ with the initial exchange matrix \eqref{matrix}, the highest degree of $x_1$ and $x_2$ in the numerator \eqref{LP} is $2s$ and $2t$ respectively.
\end{lemma}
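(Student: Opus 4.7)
The plan is to exploit invariance under the two initial mutations separately, viewing $\mcT$ as a Laurent polynomial in one variable at a time. By the earlier observation that rank~$2$ cluster algebras satisfy the IMR condition together with \Cref{equivalent}, it suffices to verify $\mcT(x_1, x_2) = \mcT(\mu_1(x_1,x_2))$ and $\mcT(x_1,x_2) = \mcT(\mu_2(x_1,x_2))$. I will write $\mcT$ as a Laurent polynomial in $x_1$ over $\mbQ[x_2^{\pm 1}]$:
$$\mcT(x_1, x_2) = \sum_{k = r_-}^{r_+} a_k(x_2)\, x_1^k,$$
with $a_{r_-}, a_{r_+} \neq 0$. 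The reducedness of \eqref{LP} (that $x_1 \nmid \sum_{i,j}\lambda_{ij} x_1^i x_2^j$) guarantees $r_- = -s$, so the desired conclusion translates to $r_+ = s$; equivalently, the support $\{k : a_k \neq 0\}$ must be symmetric around $0$.

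The key identity arises by substituting $x_1 \mapsto (x_2^n+1)/x_1$:
$$\mcT(\mu_1(x_1,x_2)) = \sum_k a_k(x_2)(x_2^n+1)^k x_1^{-k} = \sum_l a_{-l}(x_2)(x_2^n+1)^{-l} x_1^l.$$
Equating this with $\sum_l a_l(x_2) x_1^l$ and comparing coefficients of $x_1^l$ in the free $\mbQ(x_2)$-module $\mbQ(x_2)[x_1^{\pm 1}]$ yields
$$a_l(x_2) = a_{-l}(x_2)(x_2^n+1)^{-l}, \quad \forall\, l \in \mbZ.$$
Since $(x_2^n+1)^{-l}$ is a nonzero element of $\mbQ(x_2)$, we conclude $a_l \neq 0 \Longleftrightarrow a_{-l} \neq 0$, hence the support is negation-symmetric and $r_+ = -r_- = s$. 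Multiplying through, the highest degree of $x_1$ in $\sum_{i,j} \lambda_{ij} x_1^i x_2^j = x_1^s x_2^t \mcT$ equals $s + r_+ = 2s$.

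An entirely analogous argument using $\mu_2$-invariance, with $\mcT$ expanded this time as a Laurent polynomial in $x_2$ over $\mbQ[x_1^{\pm 1}]$ and applying the substitution $x_2 \mapsto (x_1^m+1)/x_2$, gives the highest degree of $x_2$ in the numerator as $2t$. No serious obstacle is anticipated: the argument rests on the single observation that $\mu_1$ acts on the $x_1$-coordinate as the involution $x_1 \mapsto (x_2^n+1)/x_1$, which forces a symmetry on the $x_1$-degree spectrum of any invariant. The only minor point to verify is that the coefficient comparison step is legitimate in $\mbQ[x_2^{\pm 1}][x_1^{\pm 1}]$, which follows from the freeness of $\{x_1^k\}_{k \in \mbZ}$ over $\mbQ(x_2)$.
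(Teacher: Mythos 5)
Your proof is correct and follows essentially the same route as the paper: both exploit invariance under $\mu_1$, substitute $x_1 \mapsto (x_2^n+1)/x_1$ into the Laurent expansion, and deduce that the $x_1$-degree range must be symmetric about $0$ (and likewise for $x_2$ via $\mu_2$). Your coefficient-wise identity $a_l(x_2) = a_{-l}(x_2)(x_2^n+1)^{-l}$ in $\mbQ(x_2)[x_1^{\pm 1}]$ is a slightly cleaner packaging of the paper's argument, which only compares the extreme terms of the reduced fraction via a divisibility-by-$x_1$ observation.
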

\begin{proof}
	Assume that $\mcT(x_1,x_2)$ is a Laurent mutation invariant of $\mcA$ as \eqref{LP}. It is direct that $s\geq 1$ and $t\geq 1$. Since $\mcT(x_1,x_2)$ is not a constant, there must be items $\{x_1^{i_0}x_2^{j_1},\dots,x_1^{i_0}x_2^{j_r}\}$ in the numerator which are not  $x_1^{s}x_2^{t}$ such that $\lambda_{i_0,j_k}\neq 0$, where $k\in\{1,\dots,r\}$ and $i_0$ is maximal. Notice that $\mcT(\mu_1(x_1,x_2))=\mcT(x_1,x_2)$, we have 
	\begin{align}\dfrac{\sum_{i,j}\lambda_{ij}(\frac{x_2^n+1}{x_1})^{i}x_2^{j}}{(\frac{x_2^n+1}{x_1})^{s}x_2^{t}}=\dfrac{\sum_{i,j}\lambda_{ij}x_1^{i}x_2^{j}}{x_1^{s}x_2^{t}}.\label{lem2.34.1}	
	\end{align}
 Multiplying $x_1^{i_0}$ to the numerator and denominator on the left hand of \eqref{lem2.34.1} simutaneously, we get \begin{align}\dfrac{\sum_{i,j}\lambda_{ij}x_1^{i_0-i}(x_2^n+1)^ix_2^j}{x_1^{i_0-s}(x_2^n+1)^sx_2^t}=\dfrac{\sum_{i,j}\lambda_{ij}x_1^{i}x_2^{j}}{x_1^{s}x_2^{t}}. \label{lem2.34.2}\end{align} Notice that in the numerator on the left hand side of \eqref{lem2.34.2}, there are nonzero items concerning $x_2$ which are not divisible by $x_1$ as follows $$\lambda_{i_0,j_1}(x_2^n+1)^{i_0}x_2^{j_1}+\dots+\lambda_{i_0,j_r}(x_2^n+1)^{i_0}x_2^{j_r}.$$ Hence $x_1$ does not divide  $\sum_{i,j}\lambda_{ij}x_1^{i_0-i}(x_2^n+1)^ix_2^j$ and we get $i_0-s=s$, which implies $i_0=2s$. Similarly, by $\mcT(\mu_2(x_1,x_2))=\mcT(x_1,x_2)$, the maximal degree of $x_2$ is $2t$.
\end{proof}
\begin{remark} 
	The examples can be referred to the Laurent mutation invariants \eqref{A2 invariant}, \eqref{B2 invariant1}, \eqref{B2 invariant2}, \eqref{G2 invariant1}, \eqref{G2 invariant2} of finite type and \eqref{14 invariant}, \eqref{22 invariant} of affine type.
\end{remark}
Now, in the following, we aim to prove our main theorem.
\begin{theorem}\label{Thm4.3}  There does not exist a Laurent mutation invariant of non-affine type.
\end{theorem}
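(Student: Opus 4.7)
The plan is to argue by contradiction using only the two mutation relations $\mcT\circ\mu_1=\mcT$ and $\mcT\circ\mu_2=\mcT$ together with \Cref{degree twice}, so the explicit $\mfd$-vector formulas of \Cref{d} do not need to be invoked. Suppose toward contradiction that $\mcT(x_1,x_2)=P(x_1,x_2)/(x_1^sx_2^t)$ is a Laurent mutation invariant in reduced form, so $x_1\nmid P$ and $x_2\nmid P$. \Cref{degree twice} gives $s,t\geq 1$, $\deg_{x_1}P=2s$, and $\deg_{x_2}P=2t$. Write $P(x_1,x_2)=\sum_{k=0}^{2s}Q_k(x_2)\,x_1^k$ with each $Q_k\in\mathbb{Q}[x_2]$ of $x_2$-degree at most $2t$; reducedness forces $Q_0\neq 0$, and the $x_1$-degree condition forces $Q_{2s}\neq 0$.

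The identity $\mcT(x_1,x_2)=\mcT\bigl((x_2^n+1)/x_1,\,x_2\bigr)$, after clearing denominators, becomes the polynomial equality
\[
(x_2^n+1)^s\,P(x_1,x_2)\;=\;x_1^{2s}\,P\!\left(\frac{x_2^n+1}{x_1},\,x_2\right).
\]
Expanding the right-hand side as a polynomial in $x_1$ and comparing coefficients of $x_1^k$ yields $(x_2^n+1)^s Q_k(x_2)=(x_2^n+1)^{2s-k}Q_{2s-k}(x_2)$, equivalently
\[
Q_k(x_2)\;=\;(x_2^n+1)^{s-k}\,Q_{2s-k}(x_2),\qquad 0\leq k\leq s.
\]
Specializing to $k=0$ gives $Q_0(x_2)=(x_2^n+1)^sQ_{2s}(x_2)$. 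Reading off $x_2$-degrees, together with $\deg Q_{2s}\geq 0$ and $\deg Q_0\leq 2t$, produces the bound $sn\leq 2t$.

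The symmetric computation via $\mu_2:x_2\mapsto(x_1^m+1)/x_2$ — obtained by instead grouping $P$ by powers of $x_2$ and repeating the coefficient comparison — gives the matching bound $tm\leq 2s$. Multiplying the two inequalities produces $st\cdot mn\leq 4st$, hence $mn\leq 4$, contradicting the non-affine hypothesis $mn\geq 5$. The main step is the coefficient comparison that forces the \emph{palindromic} relation $Q_k=(x_2^n+1)^{s-k}Q_{2s-k}$ among the column polynomials of $P$; everything else is a bookkeeping of degrees. Note that the resulting bound $mn\leq 4$ is sharp, which is consistent with the existence of the affine mutation invariants exhibited in \Cref{example1} and \Cref{example2}.
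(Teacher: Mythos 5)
Your proof is correct, and it takes a genuinely different and substantially more elementary route than the paper's. The paper assumes a Laurent invariant exists, expresses the clusters $\mfx_{t_{2k}}$ via the general term formulas for $\mfd$-vectors (\Cref{d}), uses the constant-term lemma (\Cref{term 1}) to match denominators, and derives a relation $u(a-b)^{k-1}+v(a+b)^{k-1}=s$ that is killed by the asymptotics of \Cref{infinite}; in other words it needs the full $\mfd$-vector machinery and a limiting argument along infinitely many seeds. You instead use only the two functional equations $\mcT\circ\mu_1=\mcT$ and $\mcT\circ\mu_2=\mcT$ at the initial seed (legitimate, since these two clusters are among the $\mfx_t$), and the single coefficient comparison $(x_2^n+1)^sQ_k=(x_2^n+1)^{2s-k}Q_{2s-k}$ is sound: both sides of $(x_2^n+1)^sP=x_1^{2s}P\bigl((x_2^n+1)/x_1,x_2\bigr)$ are genuine polynomials in $x_1$ over $\mbQ[x_2]$, reducedness gives $Q_0\neq0$ hence $Q_{2s}\neq0$, and the degree count $ns\leq\deg_{x_2}Q_0\leq 2t$ together with its mirror $mt\leq 2s$ yields $mn\leq4$. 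This is sharp (equality holds in all four inequalities for the invariants of \Cref{example1} and \Cref{example2}, and one checks it likewise on the $A_2$, $B_2$, $G_2$ invariants), so your argument simultaneously explains why Laurent invariants can exist exactly when $mn\leq4$. What the paper's longer route buys in exchange is the explicit $\mfd$-vector formulas and the growth estimate of \Cref{infinite}, which the authors reuse to reprove the finite-type classification and to pinpoint (in the remark after \Cref{Thm4.3}) precisely how the affine cases $mn=4$ escape the contradiction; your argument does not produce those byproducts, but as a proof of \Cref{Thm4.3} alone it is shorter, avoids the irrational quantities $a\pm b$ entirely, and does not even need \Cref{term 1}.
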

\begin{proof}
	Assume that there is a non-constant Laurent mutation invariant $\mcT(x_1,x_2)$ of $\mcA$. By \Cref{degree twice}, we get  \begin{align}
		\mcT(x_1,x_2)=
		\dfrac{\sum_{i=0}^{2s}\sum_{j=0}^{2t}\lambda_{ij}x_1^{i}x_2^{j}}{x_1^{s}x_2^{t}},\label{T-express}
	\end{align}
where $s\geq 1$ and $t\geq 1$ are fixed positive integers. Without loss of generality, we can assume $\lambda_{st}=0$ since it corresponds to a conctant term of $\mcT(x_1,x_2)$. According to \Cref{d}, we get the $\mfd$-vectors for cluster variables $x_{1;t_{2k}}$ and $x_{2;t_{2k}}$ as 
	\begin{align}
		\mfd_{1;t_{2k}}=(\alpha_{1,k-1}+m\alpha_{2,k-1})\mfd_{1;t_{1}}-\alpha_{2,k-1}\mfd_{2;t_{1}}= \begin{pmatrix} \alpha_{1,k-1}+m\alpha_{2,k-1}\\ \alpha_{2,k-1}\end{pmatrix},\notag\\
		\mfd_{2;t_{2k}}=(\alpha_{3,k-1}+m\alpha_{4,k-1})\mfd_{1;t_{1}}-\alpha_{4,k-1}\mfd_{2;t_{1}}= \begin{pmatrix} \alpha_{3,k-1}+m\alpha_{4,k-1}\\ \alpha_{4,k-1}\end{pmatrix}.\notag
	\end{align}
Furthermore, according to \eqref{express of d-vector}, the cluster variables $x_{1;2k}$ and $x_{2;2k}$ can be expressed by the initial cluster varibales $x_1$, $x_2$ and the $\mfd$-vectors as 
\begin{align}
	x_{1;t_{2k}}=\dfrac{N_{1;t_{2k}}(x_1,x_2)}{x_1^{\alpha_{1,k-1}+m\alpha_{2,k-1}}x_2^{\alpha_{2,k-1}}},\  x_{2;t_{2k}}=\dfrac{N_{2;t_{2k}}(x_1,x_2)}{x_1^{\alpha_{3,k-1}+m\alpha_{4,k-1}}x_2^{\alpha_{4,k-1}}},\label{d-express}
\end{align}
where $N_{1;t_{2k}}(x_1,x_2)$ and $N_{2;t_{2k}}(x_1,x_2)\in \mathbb{Z}[x_1,x_2]$. Since $\mcT(x_1,x_2)=\mcT(x_{1;t_{2k}},x_{2;t_{2k}})$ for any $k\in \mbN$, by \eqref{T-express} and \eqref{d-express}, we get 
\begin{align}
\dfrac{\sum_{i=0}^{2s}\sum_{j=0}^{2t}\lambda_{ij}x_1^{i}x_2^{j}}{x_1^{s}x_2^{t}}=
	\dfrac{\sum_{i=0}^{2s}\sum_{j=0}^{2t}\lambda_{ij}\frac{N_{1;t_{2k}}^{i}(x_1,x_2)}{x_1^{i(\alpha_{1,k-1}+m\alpha_{2,k-1})}x_2^{i\alpha_{2,k-1}}}
	\frac{N_{2;t_{2k}}^{j}(x_1,x_2)}{x_1^{j(\alpha_{3,k-1}+m\alpha_{4,k-1})}x_2^{j\alpha_{4,k-1}}}}{\frac{N_{1;t_{2k}}^{s}(x_1,x_2)}{x_1^{s(\alpha_{1,k-1}+m\alpha_{2,k-1})}x_2^{s\alpha_{2,k-1}}}\frac{N_{2;t_{2k}}^{t}(x_1,x_2)}{x_1^{t(\alpha_{3,k-1}+m\alpha_{4,k-1})}x_2^{t\alpha_{4,k-1}}}}\label{big}.
\end{align}
 Denote that $$\begin{array}{c}
 	M_k=\max\{i(\alpha_{1,k-1}+m\alpha_{2,k-1})+j(\alpha_{3,k-1}+m\alpha_{4,k-1})| \lambda_{ij}\neq 0, k\gg 0 \},\\
 N_k=\max\{i\alpha_{2,k-1}+j\alpha_{4,k-1}| \lambda_{ij}\neq 0, k\gg 0 \}.
 \end{array}$$
Now we focus on the degree of $x_1$ on the right hand of \eqref{big}. Multiplying both the numerator and denominator by $x_1^{M_{k}}$ and $x_2^{N_k}$, we get \eqref{big} equals to the following
\begin{align}
	\dfrac{\sum_{i=0}^{2s}\sum_{j=0}^{2t}\lambda_{ij}N_{1;t_{2k}}^{i}(x_1,x_2)N_{2;t_{2k}}^{j}(x_1,x_2)x_1^{M_k-i(\alpha_{1,k-1}+m\alpha_{2,k-1})-j(\alpha_{3,k-1}+m\alpha_{4,k-1})}x_2^{N_k-i\alpha_{2,k-1}-j\alpha_{4,k-1}}}{N_{1;t_{2k}}^{s}(x_1,x_2)N_{2;t_{2k}}^{t}(x_1,x_2)x_1^{M_k-s(\alpha_{1,k-1}+m\alpha_{2,k-1})-t(\alpha_{3,k-1}+m\alpha_{4,k-1})}x_2^{N_k-s\alpha_{2,k-1}-t\alpha_{4,k-1}}}.\label{long}
\end{align}
According to \Cref{term 1}, we get both $N_{1;t_{2k}}(x_1,x_2)$ and $N_{2;t_{2k}}(x_1,x_2)$ have constant term 1, and $x_1$ does not divide the numerator of \eqref{long}. In addition, the denominator of \eqref{long} must be a polynomial, which means that the degree of monomials about $x_1$ and $x_2$ is positive. Notice that both $x_1$ and $x_2$ do not divide $N_{1;t_{2k}}(x_1,x_2)$ and $N_{2;t_{2k}}(x_1,x_2)$, we obtain that  $N_{1;t_{2k}}^{s}(x_1,x_2)N_{2;t_{2k}}^{t}(x_1,x_2)$ divides the numerator of \eqref{long} and 
\begin{align}M_k-s(\alpha_{1,k-1}+m\alpha_{2,k-1})-t(\alpha_{3,k-1}+m\alpha_{4,k-1})= s, \label{equiv 0}
\end{align} for any $k\gg 0$. Assume that $(i,j)=(i_0,j_0)$ for $M_k$. Hence, we get 
\begin{align}
	u(a-b)^{k-1}+v(a+b)^{k-1}= s, \label{equiv}
\end{align}
where $s\geq 1$ is a fixed positive integer and 
$$\begin{array}{c}
	u=\dfrac{(i_0-s)[\sqrt{n(mn-4)}-n\sqrt{m}]+(j_0-t)[2\sqrt{m}-mn\sqrt{m}+m\sqrt{n(mn-4)}]}{2\sqrt{n(mn-4)}},\\
	
	v=\dfrac{(i_0-s)[\sqrt{n(mn-4)}+n\sqrt{m}]+(j_0-t)[2\sqrt{m}+mn\sqrt{m}+m\sqrt{n(mn-4)}]}{2\sqrt{n(mn-4)}}.
\end{array}$$
However, by \Cref{infinite}, it is clear that 
\begin{align}
	\lim_{k\to +\infty}(a-b)^{k-1}= 0,\ \lim_{k\to +\infty}(a+b)^{k-1}= +\infty,\notag
\end{align}
which contradict with \eqref{equiv} regardless of the value of $u$ and $v$. Hence, we proved the non-existence of Laurent mutation invariant $\mcT(x_1,x_2)$.
\end{proof}
\begin{remark}
There are differences between the affine type with $mn=4$ and non-affine type with $mn\geq 5$ as follows.
\begin{enumerate} 
		\item When $m=n=2$, the equality \eqref{equiv 0} is \begin{align}(2k-1)(i_0-s)+2k(j_0-t)= s,\ \text{for any}\ k\gg 0. \label{eq1}\end{align}
		\item When $m=1,n=4$, the equality \eqref{equiv 0} is \begin{align}(2k-1)(i_0-s)+k(j_0-t)= s,\ \text{for any}\ k\gg 0.\label{eq2}\end{align}
	\end{enumerate}
Both equalities \eqref{eq1} and \eqref{eq2} can hold for certain $i_0, j_0, s, t$, see \Cref{example1} and \Cref{example2}. However, the equality \eqref{equiv 0} does not hold regardless of the choices of $i_0, j_0, s, t$.
\end{remark}
\begin{question}
	In \Cref{Thm4.3}, we have proved that there does not exist a Laurent mutation invariant of $\mcA$. Hence, a natural question is that is there a mutation invariant of $\mcA$ (i.e. in $\mathbb{Q}(x_1,x_2)\backslash \mathbb{Q}[x_1^{\pm1},x_2^{\pm1}]$) ?
\end{question}
\section{Applications: Diophantine equations characterized by mutations}

In this section, as an application of mutation invariants, we exhibit the Diophantine equations encoded with cluster algebras, which can be characterized by the initial solution and cluster mutations.
\subsection{Diophantine equations of type $A_1\times A_1$}\

By \Cref{M action provides mutation invariants rank 2}, the Diophantine equations about $x_1$ and $x_2$ encoded with the cluster algebra of type $A_1\times A_1$ are as follows
\begin{align}\Phi(F(x_1,x_2),F(\dfrac{2}{x_1},x_2),F(\dfrac{2}{x_1},\dfrac{2}{x_2}),F(x_1,\dfrac{2}{x_2}))=\mcT(a,b), \label{eqA1}
\end{align}	
 for any symmetric polynomial $\Phi(X_1,X_2,X_3,X_4)$, rational function $F(X_1,X_2)$ and $a,b\in \mbN$. Notice that the equation \eqref{eqA1} with rational coefficients can always be adjusted to Diophantine equations with integer coefficients. In addition, $(a,b)$ must be a solution to \eqref{eqA1} which is called the \emph{initial solution}. In particular, as \Cref{rmk A1xA1}, take $(a,b)=(1,1)$, we can solve a Diophantine equation as follows. 
\begin{lemma}\label{6}
	For the Diophantine equation with two variables as follows
	\begin{align}x_1^2x_2+x_1x_2^2+2x_1+2x_2=6x_1x_2,
	\label{dio A1xA1} \end{align} all the positive integer solutions can be derived from the initial solution through a finite number of mutations of type $A_1\times A_1$.
\end{lemma}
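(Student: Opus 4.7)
The plan is to exploit the mutation invariant $\mcT(x_1,x_2) = x_1 + 2/x_1 + x_2 + 2/x_2$ from \Cref{rmk A1xA1}: after clearing the denominator $x_1 x_2$, equation \eqref{dio A1xA1} is precisely $\mcT(x_1,x_2) = 6 = \mcT(1,1)$. Hence $\mu_1$ and $\mu_2$ carry any non-zero rational solution to another non-zero rational solution, and starting from the positive integer solution $(1,1)$, every mutation image will automatically satisfy \eqref{dio A1xA1}.

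First I would compute the mutation orbit of $(1,1)$ explicitly. By \Cref{double A1} the cluster pattern of $A_1 \times A_1$ type is $4$-periodic, and substituting into $\mu_1(a,b)=(2/a,b)$ and $\mu_2(a,b)=(a,2/b)$ produces exactly the four pairs $(1,1)$, $(2,1)$, $(1,2)$, $(2,2)$. Each coordinate lies in $\{1,2\}$, so the orbit stays inside $\mbN_{+}\times \mbN_{+}$ and yields four positive integer solutions of \eqref{dio A1xA1}.

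The crux is to show no positive integer solution lies outside this orbit. Dividing \eqref{dio A1xA1} by $x_1 x_2$ rewrites the equation as
\[
x_1 + \frac{2}{x_1} + x_2 + \frac{2}{x_2} = 6.
\]
Setting $g(t)=t+2/t$, one checks $g(1)=g(2)=3$, $g(3)=11/3>3$, and $g(t)\ge t\ge 4$ for $t\ge 4$, so $g(t)\ge 3$ on positive integers with equality only at $t\in\{1,2\}$. Because $g(x_1)+g(x_2)=6$ with each summand at least $3$, equality forces $g(x_1)=g(x_2)=3$, hence $x_1,x_2\in\{1,2\}$. This enumerates exactly the four solutions already produced by mutation.

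I do not anticipate a serious obstacle: the mutation invariant reduces the Diophantine problem to a bounded additive constraint whose positive integer solutions are immediately enumerable, and the only inequality needed is the elementary lower bound on $g(t)$. One technical point worth emphasising in the write-up is that $\mu_1,\mu_2$ a priori only preserve rational solutions; the fact that integrality is preserved along the specific mutation orbit of $(1,1)$ is a feature of that orbit (every coordinate divides $2$) rather than a general phenomenon, and it is precisely this that makes the cluster-algebraic description exhaustive in the positive integer setting.
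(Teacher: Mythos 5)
Your proposal is correct and follows essentially the same route as the paper: reduce \eqref{dio A1xA1} to $x_1+\tfrac{2}{x_1}+x_2+\tfrac{2}{x_2}=6$ via the invariant, exhibit the four-element mutation orbit of $(1,1)$, and show each summand $t+\tfrac{2}{t}$ is at least $3$ on positive integers with equality only for $t\in\{1,2\}$, forcing both summands to equal $3$. Your explicit lower bound on $g(t)=t+\tfrac{2}{t}$ is a slightly cleaner packaging of the paper's case split on whether $\tfrac{2}{a}$ is an integer, but the argument is the same.
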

\begin{proof}
	First of all, notice that the Diophantine equation is equivalent to \begin{align}x_1+\dfrac{2}{x_1}+x_2+\dfrac{2}{x_2}=6. \label{dio A1}\end{align}
Since $(1,1)$ is an initial solution, by \Cref{rmk A1xA1}, we obtain a sequence of solutions to \eqref{dio A1} through mutations as follows
	$$(1,1) \stackrel{\mu_{1}}{\longleftrightarrow} (2,1) \stackrel{\mu_{2}}{\longleftrightarrow} (2,2)\stackrel{\mu_{1}}{\longleftrightarrow} (1,2)\stackrel{\mu_{2}}{\longleftrightarrow}(1,1).$$ Now, we claim that there are no other positive integer solutions apart from the four mentioned above. Assume that $(a,b)$ is another solution. If $\frac{2}{a}$ or $\frac{2}{b}$ is a positive integer, then $a+\frac{2}{a}=3$ and $b+\frac{2}{b}=3$, implying that $(a,b)$ is one of the four solutions above. Hence, we get $a\geq 3$ and $b\geq 3$ which contradict with \eqref{dio A1}. Therefore, all the positive integer solutions to \eqref{dio A1xA1} can be derived from the initial solution $(1,1)$ through a finite number of cluster mutations.
\end{proof}
\begin{remark}
	In fact, one can solve a more complicated Diophantine equation as follows in the similar method: $wxyz(w+x-y-z)=2(wxy+wxz-wyz-xyz)$. \end{remark}

\subsection{Diophantine equations of type $A_2$}\

By \Cref{M action provides mutation invariants rank 2}, the Diophantine equations about $x_1$ and $x_2$ encoded with the cluster algebra of type $A_2$ are as follows
\begin{align}\Phi(F(c_{1;1}(x_1,x_2),c_{2;1}(x_1,x_2)),\cdots,F(c_{1;10}(x_1,x_2),c_{2;10}(x_1,x_2)))=\mcT(a,b), \label{eqA2}
\end{align}	
 for any symmetric polynomial $\Phi(X_1,\dots,X_{10})$, rational function $F(X_1,X_2)$ and $a,b\in \mbN$. Note that $(c_{1;i}(x_1,x_2),c_{2;i}(x_1,x_2))^{10}_{i=1}$ are 10 different clusters of type $A_2$. It is clear that $(a,b)$ is a solution to \eqref{eqA2} which is called the \emph{initial solution}. In particular, in \Cref{A2}, take $(a,b)=(1,1)$ and we can solve a Diophantine equation as follows. 
\begin{lemma}\label{9}
	For the Diophantine equation with two variables as follows
	\begin{align}x_{1}^{2}x_{2}+x_{1}x_{2}^{2}+x_{1}^2+x_{2}^2+2x_{1}+2x_{2}+1=9x_{1}x_{2}, \label{dio A2} \end{align}all the positive integer solutions can be derived from the initial solution through a finite number of mutations of type $A_2$.
\end{lemma}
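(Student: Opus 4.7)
The plan is to parallel the argument for the $A_1\times A_1$ case (\Cref{6}) and exploit the mutation invariant
$$\mcT(x_1,x_2)=\frac{x_1^2x_2+x_1x_2^2+x_1^2+x_2^2+2x_1+2x_2+1}{x_1x_2}$$
of type $A_2$ derived in \Cref{A2}. The Diophantine equation \eqref{dio A2} is precisely $\mcT(x_1,x_2)=9$, and since $\mcT(1,1)=9$, the pair $(1,1)$ is an initial solution. By mutation invariance (together with \Cref{equivalent}), the orbit of $(1,1)$ under $\mu_1,\mu_2$ consists of positive integer solutions; evaluating the ten clusters listed in \Cref{A2} at $(x_1,x_2)=(1,1)$ collapses to the five distinct pairs $(1,1),\ (2,1),\ (1,2),\ (2,3),\ (3,2)$, each of which is visibly reached from $(1,1)$ by a finite sequence of mutations.

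The core task is to rule out any further positive integer solution. The plan is to fix a positive integer $x_2=b$ and regard \eqref{dio A2} as a quadratic in $x_1$,
\begin{equation*}
(b+1)x_1^2+(b^2-9b+2)x_1+(b+1)^2=0,
\end{equation*}
then restrict the admissible $b$ via Vieta's formulas. The product of the two roots is $b+1>0$ and the sum equals $(9b-b^2-2)/(b+1)$, which is non-positive once $b\ge 9$. Thus for $b\ge 9$ both roots are negative, excluding any positive integer $x_1$. For the intermediate range $b\in\{4,5,6,7,8\}$ I would compute the discriminant $(b^2-9b+2)^2-4(b+1)^3$ case by case and verify that it is negative, so the quadratic has no real roots at all.

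This restricts $b$ to $\{1,2,3\}$. Solving the resulting integer-factorable quadratics yields $x_1\in\{1,2\}$, $\{1,3\}$, $\{2\}$ respectively, reproducing the five pairs above. Pairing each of them with the explicit mutation sequence from \Cref{A2} that takes $(1,1)$ to it then closes the argument. The only obstacle with any computational weight is the five-case discriminant check in the middle range; it is entirely routine, and apart from this finite verification the proof is a direct application of the mutation invariance of $\mcT$.
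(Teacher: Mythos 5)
Your proof is correct and follows essentially the same route as the paper: both reduce the equation for fixed $x_2=b$ to the quadratic $(b+1)x_1^2+(b^2-9b+2)x_1+(b+1)^2=0$ and use Vieta's formulas to bound $b\le 3$, then enumerate. The only difference is cosmetic: the paper excludes $b\ge 4$ in one stroke via the AM--GM inequality $a+a'\ge 2\sqrt{aa'}=2\sqrt{b+1}$ combined with monotonicity of $\frac{-b^2+9b-2}{b+1}-2\sqrt{b+1}$, whereas you split off $b\ge 9$ by the sign of the root sum and dispose of $b\in\{4,\dots,8\}$ by a finite discriminant check, which is equally valid.
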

\begin{proof}
	Since $(1,1)$ is an initial solution, by \Cref{A2}, we obtain a sequence of solutions to \eqref{A2} through mutations as follows
	\begin{align}   &(1,1) \stackrel{\mu_{2}}{\longleftrightarrow} (1,2) \stackrel{\mu_{1}}{\longleftrightarrow} (3,2)\stackrel{\mu_{2}}{\longleftrightarrow} (3,2)\stackrel{\mu_{1}}{\longleftrightarrow} (1,2)
	\stackrel{\mu_{2}}{\longleftrightarrow} (1,1)\notag \\& \stackrel{\mu_{1}}{\longleftrightarrow} (2,1) \stackrel{\mu_{2}}{\longleftrightarrow} (2,3) \stackrel{\mu_{1}}{\longleftrightarrow} (2,3)  \stackrel{\mu_{2}}{\longleftrightarrow} (2,1) \stackrel{\mu_{1}}{\longleftrightarrow} (1,1) 
	.\notag
	\end{align}
	 Now, we claim that there are not other solutions. Assume that $(a,b)$ is another positive integer solution to \eqref{dio A2}. By \Cref{A2}, we obtain that $$(a',b)=\mu_{1}(a,b)=(\dfrac{b+1}{a},b)$$ is also a solution to \eqref{dio A2}. Notice that \begin{align}
		(b+1)a^2+(b^2-9b+2)a+(b+1)^2=0.\notag
	\end{align} By Vieta's formulas, we get $$a+a'=\dfrac{-b^{2}+9b-2}{b+1},\ aa'=b+1.$$ If $b\geq4$, we observe that $$a+a'\geq 2\sqrt{aa'}=2\sqrt{b+1}.$$ However, by monotonicity, it is direct that $$\dfrac{-b^{2}+9b-2}{b+1}- 2\sqrt{b+1}\leq \dfrac{18}{5}-2\sqrt{5}<0,$$ which is a contradiction. Hence, we conclude that $b<4$. Based on the results above, the listed solutions are complete. In other words, all the positive integer solutions to \eqref{dio A2} can be derived from the initial solution $(1,1)$ through a finite number of mutations.

\end{proof}
\begin{remark}
	Note that \Cref{9} is also proved in \cite[Proposition 23]{GM} from a different view.
\end{remark}
\subsection{Diophantine equations of type $B_2$}\

By \Cref{M action provides mutation invariants rank 2}, the Diophantine equations about $x_1$ and $x_2$ encoded with the cluster algebra of type $B_2$ are as follows
\begin{align}\Phi(F(c_{1;1}(x_1,x_2),c_{2;1}(x_1,x_2)),\cdots,F(c_{1;6}(x_1,x_2),c_{2;6}(x_1,x_2)))=\mcT(a,b), \label{eqB2}
\end{align}	
 for any symmetric polynomial $\Phi(X_1,\dots,X_{6})$, rational function $F(X_1,X_2)$ and $a,b\in \mbN$. Note that $(c_{1;i}(x_1,x_2),c_{2;i}(x_1,x_2))^6_{i=1}$ are 6 different clusters of type $B_2$. It is clear that $(a,b)$ is a solution to \eqref{eqB2} which is called the \emph{initial solution}. In particular, as \eqref{B2 invariant1} in \Cref{B2}, take $(a,b)=(1,1)$ and we can solve a Diophantine equation as follows. 
	
\begin{lemma}\label{8}
	For the Diophantine equation with two variables as follows
	\begin{align}x_{2}^4+x_{1}^2x_{2}^2+2x_{2}^2+x_{1}^2+2x_{1}+1=8x_{1}x_{2}^2,\label{dio B2} \end{align} all the positive integer solutions can be derived from the initial solution through a finite number of mutations of type $B_2$.
\end{lemma}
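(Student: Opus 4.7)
The plan is to follow the strategy of the proof of \Cref{9}, leveraging the Laurent mutation invariant $\mcT_1$ of type $B_2$ from \Cref{B2}. A direct substitution shows $\mcT_1(1,1) = 8$, so \eqref{dio B2} is precisely the level set $\mcT_1(x_1,x_2) = 8$ and $(1,1)$ is an initial positive integer solution. Starting from $(1,1)$ and alternately applying $\mu_1$ and $\mu_2$, I would trace out the six-element orbit
\[
(1,1)\stackrel{\mu_1}{\longleftrightarrow}(2,1)\stackrel{\mu_2}{\longleftrightarrow}(2,3)\stackrel{\mu_1}{\longleftrightarrow}(5,3)\stackrel{\mu_2}{\longleftrightarrow}(5,2)\stackrel{\mu_1}{\longleftrightarrow}(1,2)\stackrel{\mu_2}{\longleftrightarrow}(1,1),
\]
which realizes the six clusters of the $B_2$ cluster pattern as positive integer solutions of \eqref{dio B2}.

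The main task is to prove that no other positive integer solutions exist, for which I would apply Vieta's formulas twice. Specializing $x_2 = b$ turns \eqref{dio B2} into the quadratic $(b^2+1)x_1^2 - (8b^2-2)x_1 + (b^2+1)^2 = 0$, whose two roots are $a$ and $a' = \tfrac{b^2+1}{a}$ (the first coordinate of $\mu_1(a,b)$). Vieta gives $a+a' = \tfrac{8b^2-2}{b^2+1}$ and $aa' = b^2+1$, so AM--GM $a+a' \geq 2\sqrt{aa'}$ yields $\tfrac{8b^2-2}{b^2+1} \geq 2\sqrt{b^2+1}$; the left side is bounded above by $8$ while the right side grows like $2b$, forcing $b \leq 3$. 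Symmetrically, fixing $x_1 = a$ and setting $y = x_2^2$, the equation becomes the quadratic $y^2 + (a^2-8a+2)y + (a+1)^2 = 0$ with roots $y = b^2$ and $y' = \tfrac{(a+1)^2}{b^2}$ coming from the $\mu_2$-image. Vieta now gives $y+y' = 8a-a^2-2$ and $yy' = (a+1)^2$, and AM--GM forces $8a-a^2-2 \geq 2(a+1)$, i.e.\ $a^2-6a+4 \leq 0$, so $a \leq 5$.

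With both coordinates confined to $1 \leq a \leq 5$ and $1 \leq b \leq 3$, the proof closes by directly checking the fifteen candidate pairs; exactly the six pairs of the orbit above will satisfy \eqref{dio B2}. I anticipate the only delicate point to be ensuring the positivity conditions that make the Vieta/AM--GM step legitimate (in particular $8a - a^2 - 2 > 0$, which incidentally already gives the weaker bound $a \leq 7$) and handling the biquadratic nature of the $\mu_2$ equation so that a positive root $y$ actually corresponds to a positive integer $b$ with $y = b^2$. The overall argument is parallel to the $A_2$ case of \Cref{9} and presents no conceptual obstacle beyond this bookkeeping.
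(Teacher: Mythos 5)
Your proposal is correct and follows essentially the same route as the paper: exhibit the six-element orbit of $(1,1)$, then use Vieta's formulas on the quadratic in $x_1$ (giving $b\leq 3$) and on the quadratic in $x_2^2$ (giving $a\leq 5$ via AM--GM), and finish by checking the finitely many remaining candidates. The only cosmetic difference is that the paper phrases the second Vieta step as $b^2+b'^2\geq 2bb'$ rather than substituting $y=x_2^2$, and it checks only the cases $a=3,4$ at the end instead of all fifteen pairs.
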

\begin{proof}	
Since $(1,1)$ is an initial solution, by \Cref{B2}, we obtain a sequence of solutions to \eqref{B2} through mutations as follows
	\begin{align}
		(1,1) \stackrel{\mu_{1}}{\longleftrightarrow} (2,1) \stackrel{\mu_{2}}{\longleftrightarrow} (2,3)\stackrel{\mu_{1}}{\longleftrightarrow} (5,3)\stackrel{\mu_{2}}{\longleftrightarrow} (5,2)\stackrel{\mu_{1}}{\longleftrightarrow} (1,2) \stackrel{\mu_{2}}{\longleftrightarrow} (1,1)\notag
		.\end{align}
	 Now, we claim that there are no other solutions. Assume that $(a,b)$ is another positive integer solution to \eqref{dio B2}. By \Cref{B2}, we deduce that $$(a',b)=\mu_{1}(a,b)=(\dfrac{b^2+1}{a},b)$$ is also a solution to \eqref{dio B2}. Notice that $$(b^2+1)a^2+(2-8b^2)a+(b^2+1)^2=0.$$
	 By Vieta's formulas, we get
	\begin{align}
		a+a^{\prime}=\dfrac{8b^2-2}{b^2+1}\geq 2\sqrt{aa^{\prime}}=2\sqrt{b^2+1},\notag
	\end{align}
	implying that $b\leq 3$. Similarly, we have 
	\begin{align}
		b^2+{b^{\prime}}^2=-a^2+8a-2\geq 2bb^{\prime}=2(a+1).\notag
	\end{align}
	Thus, we get $a\leq 5$, indicating that $a=4$ or $a=3$. However, it is direct that there are no positive integer solutions to $b$ for both cases. Consequently, we conclude that all the positive integer solutions to \eqref{dio B2} can be derived from the initial solution $(1,1)$ through a finite number of mutations.
\end{proof}
\subsection{Diophantine equations of type $G_2$}\

By \Cref{M action provides mutation invariants rank 2}, the Diophantine equations about $x_1$ and $x_2$ encoded with the cluster algebra of type $G_2$ are as follows
\begin{align}\Phi(F(c_{1;1}(x_1,x_2),c_{2;1}(x_1,x_2)),\cdots,F(c_{1;8}(x_1,x_2),c_{2;8}(x_1,x_2)))=\mcT(a,b), \label{eqG2}
\end{align}	
 for any symmetric polynomial $\Phi(X_1,\dots,X_{8})$, rational function $F(X_1,X_2)$ and $a,b\in \mbN$. Note that $(c_{1;i}(x_1,x_2),c_{2;i}(x_1,x_2))^8_{i=1}$ are 8 different clusters of type $G_2$. It is clear that $(a,b)$ is a solution to \eqref{eqG2} which is called the \emph{initial solution}. In particular, as \eqref{G2 invariant1} in \Cref{G2}, take $(a,b)=(1,1)$ and we can solve a Diophantine equation as follows. 
\begin{lemma}\label{11}
	For the Diophantine equation with two variables as follows
	\begin{align}x_{2}^4+x_{1}x_{2}^3+x_{2}^3+x_{1}^2x_{2}+2x_{1}x_{2}+x_{1}^2+x_{2}+2x_{1}+1=11x_{1}x_{2}^2, \label{dio G2} \end{align} all the positive integer solutions can be derived from the initial solution through a finite number of mutations of type $G_2$.
\end{lemma}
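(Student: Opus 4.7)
The plan is to follow the template established in \Cref{6}, \Cref{9}, and \Cref{8}: exhibit the orbit of the initial solution $(1,1)$ under the $G_2$ mutations and then use a Vieta--AM-GM argument to rule out any other positive integer solutions. First I would verify $\mcT_1(1,1)=11$ by direct substitution. Then, starting from $(1,1)$ and alternately applying $\mu_1$ and $\mu_2$ as in \Cref{G2}, I would compute the period-$8$ orbit
\[
(1,1)\stackrel{\mu_1}{\longleftrightarrow}(2,1)\stackrel{\mu_2}{\longleftrightarrow}(2,3)\stackrel{\mu_1}{\longleftrightarrow}(14,3)\stackrel{\mu_2}{\longleftrightarrow}(14,5)\stackrel{\mu_1}{\longleftrightarrow}(9,5)\stackrel{\mu_2}{\longleftrightarrow}(9,2)\stackrel{\mu_1}{\longleftrightarrow}(1,2)\stackrel{\mu_2}{\longleftrightarrow}(1,1),
\]
which yields eight distinct positive integer solutions. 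It remains to show no others exist.

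To bound all solutions, I would view the equation as a quadratic in $x_1$ with coefficients in $\mbZ[x_2]$:
\[
(x_2+1)x_1^2+(x_2^3-11x_2^2+2x_2+2)x_1+(x_2+1)(x_2^3+1)=0,
\]
exploiting the factorization $x_2^4+x_2^3+x_2+1=(x_2+1)(x_2^3+1)$. For any positive integer solution $(a,b)$, the second root $a'$ of this quadratic at $x_2=b$ satisfies $a a'=b^3+1>0$ (hence $a'>0$) and $a+a'=(11b^2-b^3-2b-2)/(b+1)$, so that AM-GM applied to the positive reals $a,a'$ gives
\[
\frac{11b^2-b^3-2b-2}{b+1}\geq 2\sqrt{b^3+1}.
\]
For $b\geq 11$ the left-hand side is non-positive (the $-b^3$ term dominates $11b^2$), while for $b\in\{6,7,8,9,10\}$ the discriminant $(b^3-11b^2+2b+2)^2-4(b+1)^2(b^3+1)$ of the quadratic is negative by direct computation. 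In either regime no positive real solution exists, so necessarily $b\in\{1,2,3,4,5\}$.

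Finally, for each such $b$ I would solve the quadratic explicitly: $b=1$ gives $a^2-3a+2=0$ with $a\in\{1,2\}$; $b=2$ gives $a^2-10a+9=0$ with $a\in\{1,9\}$; $b=3$ gives $a^2-16a+28=0$ with $a\in\{2,14\}$; $b=4$ gives $5a^2-102a+325=0$ whose discriminant $3904$ is not a perfect square, so no integer $a$ exists; and $b=5$ gives $a^2-23a+126=0$ with $a\in\{9,14\}$. Collecting, the eight pairs obtained coincide exactly with the orbit displayed above, completing the proof. I expect the main technical step to be the two-case bound on $b$: one must separate the ``discriminant negative'' regime $6\leq b\leq 10$ from the ``sum of roots non-positive'' regime $b\geq 11$, and each sub-case is settled by a short direct check. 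Once $b\leq 5$ is in hand the remainder is routine elementary algebra.
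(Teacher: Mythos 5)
Your proposal is correct and follows essentially the same route as the paper: exhibit the $8$-periodic orbit of $(1,1)$, view \eqref{dio G2} as the quadratic $(b+1)a^2+(b^3-11b^2+2b+2)a+(b+1)(b^3+1)=0$, use Vieta's formulas with $aa'=b^3+1$ and the AM--GM inequality to force $b\leq 5$, and finish by checking the remaining values of $b$ (in particular that $5a^2-102a+325=0$ has no integer root). Your write-up is merely more explicit than the paper's in splitting the exclusion of $b\geq 6$ into two regimes and in solving each quadratic for $b\in\{1,\dots,5\}$, but the underlying argument is identical.
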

\begin{proof}
Since $(1,1)$ serves as an initial solution, by \Cref{G2}, a sequence of solutions to \eqref{dio G2} induced by mutations are given by
 \begin{align}
 	&(1,1) \stackrel{\mu_{1}}{\longleftrightarrow} (2,1) \stackrel{\mu_{2}}{\longleftrightarrow} (2,3)\stackrel{\mu_{1}}{\longleftrightarrow} (14,3)\stackrel{\mu_{2}}{\longleftrightarrow} (14,5)\notag \\& \stackrel{\mu_{1}}{\longleftrightarrow} (9,5) \stackrel{\mu_{2}}{\longleftrightarrow} (9,2)\stackrel{\mu_{1}}{\longleftrightarrow} (1,2)\stackrel{\mu_{2}}{\longleftrightarrow} (1,1) \notag
 	.\end{align}
 Now, we claim that there are no other solutions. Assume that $(a,b)$ is another positive integer solution to \eqref{dio G2}. By \Cref{G2}, we obtain that that $$(a',b)=\mu_{1}(a,b)=(\dfrac{b^3+1}{a},b)$$ is also a solution to \eqref{G2}. Notice that $$(b+1)a^2+(b^3-11b^2+2b+2)a+(b^4+b^3+b+1)=0.$$ By Vieta's formulas, we get 
 \begin{align}
 	a+a^{\prime}=\dfrac{-b^3+11b^2-2b-2}{b+1},\, aa^{\prime}=b^3+1. \notag
 \end{align} 
Since $a+a^{\prime}\geq2\sqrt{aa^{\prime}}$, we deduce that $b\leq 5$. Moreover, when $b=4$, there is no integer solution to  $5a^2-102a+325=0$. Consequently, we get all positive integer solutions to \eqref{dio G2} can be derived from the initial solution $(1,1)$ through a finite number of mutations.
\end{proof}
\begin{remark}
The Diophantine equations solved in \Cref{6}, \Cref{9}, \Cref{8} and \Cref{11} all possess finite positive integer solutions, which are derived from the initial solutions through finite mutations. Therefore, a natural question arises as follows.
\end{remark}
\begin{question}
	Is it true that all the solutions to the Diophantine equations \eqref{eqA1}, \eqref{eqA2}, \eqref{eqB2}, \eqref{eqG2} encoded with cluster algebras of finite type can be derived from the initial solution $(a,b)$ through finite mutations? 
\end{question}
\subsection{Diophantine equations of affine type}\

Firstly, we consider the affine $A_1^{(1)}$ case that $m=n=2$ in \eqref{matrix} and solve the Diophantine equation \eqref{dio 22} as follows. Note that the equation \eqref{dio 22} is also studied in \cite[Theorem 21]{GM} by generalized cluster algebras. In addition, it can be obtained by substituting $1$ for one of the variables in the Markov equation \eqref{markov}. 
\begin{lemma}\label{main2}
	For the Diophantine equation with two variables as follows
	\begin{align}x_1^2+x_2^2+1=3x_1x_2, \label{dio 22}\end{align} all the positive integer solutions can be derived from the initial solutions through a finite number of mutations of $\mcA$.
\end{lemma}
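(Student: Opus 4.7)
The plan is to apply Vieta jumping along the mutations of the cluster algebra of affine $A_1^{(1)}$ type, parallel to the finite-type arguments already carried out in \Cref{6}, \Cref{9}, \Cref{8}, and \Cref{11}. First, I would rewrite \eqref{dio 22} as $\mcT(x_{1},x_{2})=3$ for the mutation invariant $\mcT$ of \Cref{AA}, observe that $(1,1)$ is a positive integer solution, and use \Cref{Integral} to note that the mutations
\[
	\mu_{1}(a,b)=\Big(\tfrac{b^{2}+1}{a},\,b\Big) \qquad \text{and} \qquad \mu_{2}(a,b)=\Big(a,\,\tfrac{a^{2}+1}{b}\Big)
\]
preserve positive integer solutions. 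Consequently every iterated mutation of $(1,1)$ is already a positive integer solution; in particular, the infinite chain $(1,1)\leftrightarrow(2,1)\leftrightarrow(2,5)\leftrightarrow(13,5)\leftrightarrow\cdots$ produces infinitely many of them.

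The core of the argument will be a strict descent, showing that \emph{every} positive integer solution lies in this single orbit. Given a positive integer solution $(a,b)$ with, by symmetry, $a\ge b\ge 1$, Vieta's formulas applied to the quadratic $\lambda^{2}-3b\lambda+(b^{2}+1)=0$ yield $a+a'=3b$ and $aa'=b^{2}+1$. If $a=b$ then $2a^{2}+1=3a^{2}$ forces $a=b=1$; otherwise $a\ge b+1$ and
\[
	a' \;=\; \frac{b^{2}+1}{a} \;\le\; \frac{b^{2}+1}{b+1} \;\le\; b \;<\; a,
\]
where the middle inequality reduces to $b\ge 1$. Hence $\mu_{1}(a,b)=(a',b)$ is a positive integer solution with $\max(a',b)=b<\max(a,b)$, so one mutation in the direction of the larger coordinate strictly decreases that maximum.

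Iterating, and always mutating in the direction of the currently larger coordinate (which may force alternating between $\mu_{1}$ and $\mu_{2}$), I obtain a strictly decreasing sequence of positive integers $\max(a_{k},b_{k})$ which must terminate at $(1,1)$ in finitely many steps. Since each $\mu_{i}$ is an involution, reversing the finite mutation chain expresses the original solution as an iterated mutation of $(1,1)$, completing the proof. The main (mild) obstacle is the strictness of the descent at every non-initial step, resting on the inequality $(b^{2}+1)/(b+1)\le b$; in contrast to the finite-type lemmas, where the mutation orbit of $(1,1)$ is finite and one rules out extraneous candidates by a direct size bound, here the orbit is infinite, and the argument hinges on the monotonicity of $\max(a,b)$ under a single mutation step.
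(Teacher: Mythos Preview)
Your proposal is correct and follows essentially the same approach as the paper: both argue by Vieta jumping (descent on $\max(a,b)$), using \Cref{Integral} to ensure mutations stay in positive integers and showing that mutating in the direction of the larger coordinate strictly decreases the maximum until $(1,1)$ is reached. The only cosmetic difference is that the paper obtains the strict inequality $a'<b$ from the sign computation $g(b)=1-b^{2}<0$ (treating $b=1$ separately), whereas you get $a'\le b$ from $(b^{2}+1)/(b+1)\le b$ and handle the boundary via the case $a=b$; both lead to the same strict drop in $\max(a,b)$.
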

\begin{proof}
	 It is clear that $(1,1)$ is a solution and we call it an \emph{initial solution}. According to \Cref{AA} and \Cref{Integral}, we get a sequence of positive integer solutions to \eqref{dio 22}  by mutations as follows \begin{align} \cdots  \stackrel{\mu_{2}}{\longleftrightarrow} (5,2)\stackrel{\mu_{1}}{\longleftrightarrow} (1,2)\stackrel{\mu_{2}}{\longleftrightarrow} (1,1) \stackrel{\mu_{1}}{\longleftrightarrow} (2,1) \stackrel{\mu_{2}}{\longleftrightarrow} (2,5) \stackrel{\mu_{1}}{\longleftrightarrow}   \cdots .\label{22 seq}
		\end{align}
		Then, we claim that for any positive integer solution $(a,b)$, there is a sequence $(t_1,\dots,t_{r})\in (1,2)^{\times r}$ for some integer $r\geq 0$ such that $$(a,b)=(\mu_{t_{r}} \dots \mu_{t_{1}} )(1,1).$$ In fact, we can take induction on the maximum $m=\max(a,b)$. It is clear that when $m=1$, the claim holds. For $m>1$, without loss of generality, we can assume $a\geq b$. Let $(a^{\prime},b)=\mu_{1}(a,b)$. According to \Cref{Integral}, we get  $a^{\prime}\in \mathbb{N}_{+}$. 
		Considering the polynomial $g(\lambda) = \lambda^2 - 3b\lambda + b^2 + 1 $, we observe that \(a\) and \(a^{\prime}\) are two zeros.
		It is clear that $(a,b)=(2,1)=\mu_{1}(1,1)$ is the unique solution with $b=1$ except for the initial solution $(1,1)$. Now, assume that $b\neq 1$ and we get $$g(b)=b^2-3b^2+b^2+1=1-b^2<0.
		\notag $$ This infers that the polynomial $g(\lambda)$ has two distinct zero points and $b$ lies between them. Therefore, according to $a\geq b$, we get $a^{\prime}<b<a$. In particular, the maximum $m=a$ is strictly larger than $b$, which is $\max(a^{\prime},b)$. 
		By induction, there is a sequence \((t_1, \ldots, t_{r}) \in (1,2)^{\times r}\) for some $r\in \mbN_{+}$ such that \((a^{\prime},b) = (\mu_{t_{r}} \ldots \mu_{t_{1}} )(1,1)\). Therefore, we obtain that $$(a,b) = \mu_{1}(a^{\prime},b) = (\mu_{1} \mu_{t_{r}} \ldots \mu_{t_{1}} )(1,1).$$ It follows that all the positive integer solutions to \eqref{dio 22} are as \eqref{22 seq} and any one of them can be derived from the initial solution $(1,1)$ through a finite number of mutations.
\end{proof}	
\begin{remark}
Notably, mutations applied to the initial solution $(1,1)$ consistently maintain the integrality, generating an infinite sequence of positive integer solutions to \eqref{dio 22}.
\end{remark}
Now, consider the affine $A_2^{(2)}$ case that $m=1$, $n=4$ in \eqref{matrix}. We aim to solve the following Diophantine equation \begin{align}
	x_{2}^4+x_{1}^2+2x_{1}+1=5x_{1}x_{2}^2.\label{eq14}
\end{align}
	 Firstly, a preparatory lemma as follows is necessary. 
\begin{lemma}\label{ab}
	Let $(a,b)$ be a positive integer solution to \eqref{eq14}, where $a\neq 1$, $b\neq 1$.
	\begin{enumerate} 
	\item Take $(a^{\prime},b)=\mu_{1}(a,b)$, if $a>b^2$, then $a^{\prime}<b^2<a$; if $a<b^2$, then $a^{\prime}>b^2>a$.
	\item Take $(a,b^{\prime})=\mu_{2}(a,b)$, if $a>b^2$, then ${b^{\prime}}^2>a>b^2$; if $a<b^2$, then ${b^{\prime}}^2<a<b^2$.
	\end{enumerate} 
\end{lemma}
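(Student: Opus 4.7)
The plan is to exploit, for both parts, the Vieta's formulas that were already used in the proof of \Cref{14}. Since $(a,b)$ satisfies \eqref{eq14}, one has $\mathcal{T}(a,b)=5$, so $a$ and its companion $a'=(b^{4}+1)/a$ are the two roots of
\[
f_1(\lambda)=\lambda^{2}+(2-5b^{2})\lambda+(b^{4}+1),
\]
while $b^{2}$ and ${b'}^{2}=((a+1)/b)^{2}$ are the two roots of
\[
g(\mu)=\mu^{2}-5a\mu+(a+1)^{2}.
\]
In particular $aa'=b^{4}+1>0$ and $b^{2}{b'}^{2}=(a+1)^{2}>0$, so $a'$ and ${b'}^{2}$ are positive, and the sums $a+a'=5b^{2}-2$ and $b^{2}+{b'}^{2}=5a$ are also positive.

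The heart of the argument is a single sign computation. A direct calculation gives
\[
f_1(b^{2})=-3b^{4}+2b^{2}+1=-(b^{2}-1)(3b^{2}+1),\qquad g(a)=-3a^{2}+2a+1=-(a-1)(3a+1),
\]
both of which are strictly negative whenever $b\geq 2$ and $a\geq 2$, respectively. Since $f_1$ and $g$ are monic quadratics, a negative value at a point $c$ forces $c$ to lie strictly between the two real roots. Thus $b^{2}$ sits strictly between $a$ and $a'$, and $a$ sits strictly between $b^{2}$ and ${b'}^{2}$. The dichotomy claimed in (1) and (2) is then immediate: if $a>b^{2}$, the other root $a'$ (resp.\ ${b'}^{2}$) of the corresponding quadratic must lie on the opposite side of the separator, giving $a'<b^{2}<a$ and $b^{2}<a<{b'}^{2}$; the case $a<b^{2}$ is symmetric.

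The only real point of care is the role of the hypotheses $a\neq 1$, $b\neq 1$: the factorizations above show that $f_1(b^{2})$ and $g(a)$ vanish exactly when $b=1$ or $a=1$, in which case the separator is itself a root and the strict bracketing degenerates into an equality. Excluding these boundary cases is precisely what the hypothesis does, and it is the only genuine subtlety in the proof; everything else is a mechanical Vieta sign check.
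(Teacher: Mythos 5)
Your proof is correct, and it takes a genuinely different route from the paper's. The paper argues each of the four sub-cases separately from the product relations $aa'=b^4+1$ and $bb'=a+1$, leaning on integrality at several points: for instance, from $a>b^2$ it infers $a\geq b^2+1$ to get $aa'\geq(b^2+1)b^2>b^4+1$; and in the case $a<b^2$ it must separately rule out $b'=b$ (by showing $21a^2-8a-4=0$ has no positive integer root) and $ {b'}^2=a$ (which would force $a=1$) before concluding ${b'}^2<a$. Your separator argument replaces all of this with one uniform observation: since $f_1$ and $g$ are monic with known root pairs $\{a,a'\}$ and $\{b^2,{b'}^2\}$, the factorizations $f_1(b^2)=-(b^2-1)(3b^2+1)$ and $g(a)=-(a-1)(3a+1)$ show these values are strictly negative precisely when $b>1$, resp.\ $a>1$, so the evaluation point lies strictly between the two roots and all four strict bracketings follow at once. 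This is shorter, makes the role of the hypotheses $a\neq 1$, $b\neq 1$ transparent, and in fact proves slightly more, since it needs only $a>1$ and $b>1$ as real numbers rather than integrality of the solution. Both proofs rest on the same Vieta setup already used in \Cref{14}, so yours slots in cleanly.
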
 
\begin{proof}
	We first consider the case that $(a^{\prime},b)=\mu_{1}(a,b)$. Notice that $aa^{\prime}=b^4+1$. If $a>b^2$, we claim that $a^{\prime}<b^2$. Otherwise, we have $a^{\prime}\geq b^2>1$, which implies that $$aa^{\prime}\geq (b^2+1)b^2=b^4+b^2>b^4+1,$$ which leads to a contradiction. If $a<b^2$, it follows that $$a^{\prime}=\dfrac{b^4+1}{a}>\dfrac{b^4+1}{b^2}>b^2.$$
	Now, consider the second case that $(a,b^{\prime})=\mu_{2}(a,b)$, which implies that $bb^{\prime}=a+1$. Consequently, we have ${b^{\prime}}^2a>(a+1)^2$, which implies that $${b^{\prime}}^2>\dfrac{(a+1)^2}{a}>a>b^2.$$ If $a<b^2$, it is clear that $$b^{\prime}=\dfrac{a+1}{b}<\dfrac{b^2+1}{b}=b+\dfrac{1}{b},$$ which implies that  $b^{\prime} \leq b$. Now we claim that $b^{\prime}< b$. Otherwise, since $b^2$ and ${b^{\prime}}^2$ are two zero points of the quadratic polynomial $g(\lambda)=\lambda^2-5a\lambda+(a+1)^2$, we obtain that $$25a^2-4(a+1)^2=21a^2-8a-4=0.$$ However, $a$ cannot be a positive integer, which leads to a contradiction. Moreover, $$(a+1)^2=b^2{b^{\prime}}^2>{b^{\prime}}^4,$$  which implies that ${b^{\prime}}^2 \leq a$. If ${b^{\prime}}^2=a$, by Vieta's formulas, we get $b^2=5a-a=4a$. It follows that $b^2{b^{\prime}}^2=4a^2=(a+1)^2$, which implies that $a=1$. Hence, it contradicts with $a\neq 1$ and we get ${b^{\prime}}^2 <a$.
\end{proof}

\begin{lemma}\label{main3}
	All the positive integer solutions to \eqref{eq14} can be derived from the initial solutions through a finite number of mutations of $\mcA$.
\end{lemma}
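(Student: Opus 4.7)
The plan is to adapt the strategy from the proof of Lemma \ref{main2}, replacing the elementary comparison of $a$ and $b$ by the dichotomy between $a$ and $b^2$ that is already built into Lemma \ref{ab}. I would first check that $(1,1)$ solves \eqref{eq14}, and then invoke Remark \ref{int2} to confirm that $\mu_1$ and $\mu_2$ preserve positive integrality along the doubly-infinite chain
\begin{align*}
\cdots \stackrel{\mu_1}{\longleftrightarrow} (41,3) \stackrel{\mu_2}{\longleftrightarrow} (2,3) \stackrel{\mu_1}{\longleftrightarrow} (2,1) \stackrel{\mu_2}{\longleftrightarrow} (1,1) \stackrel{\mu_1}{\longleftrightarrow} (1,2) \stackrel{\mu_2}{\longleftrightarrow} (17,2) \stackrel{\mu_1}{\longleftrightarrow} (17,9) \stackrel{\mu_2}{\longleftrightarrow} \cdots
\end{align*}
of integer solutions. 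The proof then reduces to showing that every positive integer solution appears in this orbit.

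Next I would dispose of the boundary cases $\min(a,b)=1$. Specialising \eqref{eq14} to $a=1$ yields $b^4-5b^2+4=(b^2-1)(b^2-4)=0$, so $b\in\{1,2\}$; setting $b=1$ yields $a^2-3a+2=0$, so $a\in\{1,2\}$. Hence the only solutions with $\min(a,b)=1$ are $(1,1)$, $(1,2)$ and $(2,1)$, all of which appear in the chain above.

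For the interior case $a,b\geq 2$, I would carry out an induction on the quantity $M(a,b):=\max(a,b^2)$. The first observation is that $a=b^2$ cannot occur when $b\geq 2$: substituting into \eqref{eq14} gives $3b^4-2b^2-1=(3b^2+1)(b^2-1)=0$, which forces $b=1$. Hence exactly one of $a>b^2$ or $a<b^2$ holds, and Lemma \ref{ab} prescribes the appropriate reducing move. If $a>b^2$, applying $\mu_1$ yields $(a',b)$ with $a'<b^2<a$, so $M(a',b)=b^2<M(a,b)$; if $a<b^2$, applying $\mu_2$ yields $(a,b')$ with $(b')^2<a<b^2$, so $M(a,b')=a<M(a,b)$. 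In either case $M$ strictly decreases, and after finitely many steps we reach a solution with $\min=1$, which by the previous paragraph already lies in the orbit of $(1,1)$. Inverting the sequence of mutations then recovers $(a,b)$ from $(1,1)$.

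The main conceptual point beyond the machinery already in place is isolating $\max(a,b^2)$ as the correct monovariant and checking that the equality case $a=b^2$ contributes nothing new; the rest is a clean combination of Lemma \ref{ab} and Remark \ref{int2}. I do not anticipate any genuine obstacle, since the hard analytic work has been done in Lemma \ref{ab}.
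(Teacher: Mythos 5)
Your proof is correct and follows essentially the same route as the paper's: induction on the monovariant $\max(a,b^2)$, with \Cref{ab} supplying the reducing move and \Cref{int2} the integrality, together with an explicit (and in fact slightly more careful) disposal of the boundary cases $\min(a,b)=1$ and of the equality case $a=b^2$. The only slip is that the $\mu_1/\mu_2$ labels in your displayed chain are systematically swapped (e.g.\ $(1,1)\leftrightarrow(1,2)$ is a $\mu_2$-edge, since $\mu_2(x_1,x_2)=(x_1,\tfrac{x_1+1}{x_2})$), though the numerical solutions themselves are right.
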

\begin{proof}
	It is clear that $(1,1)$ is a solution, which we refer to as an \emph{initial solution}. According to \Cref{14} and \Cref{int2}, we obtain a sequence of positive integer solutions to \eqref{eq14} by mutations as follows
	\begin{align} \cdots  \stackrel{\mu_{2}}{\longleftrightarrow}(3,2)\stackrel{\mu_{1}}{\longleftrightarrow} (1,2)\stackrel{\mu_{2}}{\longleftrightarrow} (1,1) \stackrel{\mu_{1}}{\longleftrightarrow} (2,1) \stackrel{\mu_{2}}{\longleftrightarrow} (2,3) \stackrel{\mu_{1}}{\longleftrightarrow} (41,3)  \stackrel{\mu_{2}}{\longleftrightarrow}  \cdots. \label{solution14}
\end{align}
	Now, we claim that for any positive integer solution $(a,b)$, there exists a sequence $(t_1,\dots,t_{r})\in (1,2)^{\times r}$ for some integer $r\geq 0$ such that $(a,b)=(\mu_{t_{r}} \dots \mu_{t_{1}} )(1,1)$. We take the induction on the maximum $m=\max(a,b^2)$. In the following, we assume that $a\neq 1$ and $b\neq 1$ since the solutions of these cases are clear. It is direct that $m>1$ and there are three cases to consider: $a>b^2$, $a<b^2$ or $a=b^2$. 
	
	If $a>b^2$, take $(a^{\prime},b)=\mu_{1}(a,b)$. By \Cref{ab} we have $m=a$ is strictly larger than $b^2$, which is $\max(a^{\prime},b^2)$. By induction, there is a sequence $(t_1,\dots,t_{r})\in (1,2)^{\times r}$ for some $r\in \mathbb{N}_{+}$ such that $(a^{\prime},b)=(\mu_{t_{r}} \dots  \mu_{t_{1}} )(1,1)$. Hence $(a,b)=\mu_{1}(a^{\prime},b)=(\mu_{1} \mu_{t_{r}} \dots  \mu_{t_{1}} )(1,1)$. 
	
	If $a<b^2$, take $(a,b^{\prime})=\mu_{2}(a,b)$. By \Cref{ab} we have $m=b^2$ is strictly larger than $a$, which is $\max(a,{b^{\prime}}^2)$. By induction, there is a sequence $(t_1,\dots,t_{r})\in (1,2)^{\times r}$ for some $r\in \mathbb{N}_{+}$ such that $(a,b^{\prime})=(\mu_{t_{r}} \dots  \mu_{t_{1}} )(1,1)$. Hence $(a,b)=\mu_{2}(a,b^{\prime})=(\mu_{2} \mu_{t_{r}} \dots  \mu_{t_{1}} )(1,1)$. 
	
	If $a=b^2$, it follows that $a=b=1$, which contradicts with the assumption. 
	
	Therefore, all the positive integer solutions to \eqref{eq14} are as \eqref{solution14} and any one of them can be derived from the initial solution $(1,1)$ through a finite number of mutations.
\end{proof}
\begin{remark}
	Note that the Diophantine equation \eqref{eq14} is also studied in \cite[Theorem 21]{GM} by generalized cluster algebras. In addition, it can be obtained by substituting $1$ for either $Y$ or $Z$ in the Lampe equation \eqref{lampe-equation}. 

\end{remark}
%=======================================
\subsection*{Acknowledgements}
The authors would like to express heartleft thanks to  Yu Ye and Zhe Sun from University of Science and Technology of China for their support and help. The authors also want to thank referees for their suggestions about the citations and grammars of this paper. This work is partially supported by the National Natural Science Foundation of China (Grant Nos. 12131015 and 12371042). 
%=======================================
\newpage

\end{document}